\documentclass[11pt]{article}

\setlength{\topmargin}{10mm} \setlength{\headheight}{10mm}
\setlength{\headsep}{8mm} \setlength{\textheight}{210mm}
 \setlength{\textwidth}{155mm}
\setlength{\oddsidemargin}{5mm}
 \setlength{\evensidemargin}{5mm}
\setlength{\parskip}{1.ex plus0.5ex minus0.5ex}

\usepackage{amsmath,amsthm,amssymb,graphics}
\usepackage{mathrsfs}
\usepackage{fullpage}
\usepackage{amsfonts}
\usepackage{epsfig}
\usepackage{color}
\newtheorem{thm}{Theorem}[section]
\newtheorem{cor}[thm]{Corollary}
\newtheorem{prop}[thm]{Proposition}
\newtheorem{lem}[thm]{Lemma}
\theoremstyle{definition}
\newtheorem{defn}[thm]{Definition}

\newtheorem{rem}[thm]{Remark}
\theoremstyle{remark} \numberwithin{equation}{section}

\newcommand{\R}{\mathbb{R}}

\newcommand{\mathsym}[1]{{}}

\begin{document}
\setcounter{page}{1}
\title{\textbf{Stochastic functional differential equations driven by G-Browniain motion with monotone nonlinearity}}
\author{\textbf{Faiz Faizullah$^{\footnote{E-mail:
faiz \b{} math@yahoo.com/faiz \b{}
math@ceme.nust.edu.pk/g.a.faizullah@swansea.ac.uk}}$ }
 \vspace{0.1cm}\\
 {Department of Mathematics, Swansea University, Singleton Park SA2
8PP UK}
 \\{Department of BS and H, College of E and ME, National
University}\\{ of Sciences and Technology (NUST) Pakistan}}
\maketitle
\begin{abstract} By using the Picard iteration scheme, this
article establishes the existence and uniqueness theory for
solutions to stochastic functional differential equations driven by
G-Browniain motion. Assuming the monotonicity conditions, the
boundedness and existence-uniqueness results of solutions have been
derived. The error estimation between Picard approximate solution
$y^k(t)$ and exact solution $y(t)$ has been determined. The $L^2_G$
and exponential estimates have been obtained. The theory has been
further generalized to weak monotonicity conditions. The existence,
uniqueness and exponential estimate under the weak monotonicity
conditions have been inaugurated.

\textbf{Key words:~~~} Existence and uniqueness, boundedness, error
estimation, $L^2_G$ and exponential estimates, stochastic functional
differential equations, G-Brownian motion.\\ 2010 MSC: 34K50, 60H10,
60H20.
\end{abstract}
\section{Introduction}
The existence and uniqueness theory for solutions to stochastic
dynamical systems is always a significant theme and has received a
huge attention, for instance see \cite{bl,cgr,f1,j,mc,m2,o,rf}. In
several evolution phenomena, the hereditary properties such as
time-lag, time-delay or after-effect arise in the variables
\cite{bg,lf,myz,mpbf,z}. This naturally leads us to use stochastic
functional differential equations which take into consideration the
history of the system.
 Assuming the growth and
Lipschitz conditions, Ren et al. \cite{rbs} and Faizullah \cite{f6}
gave the existence-uniqueness results for solutions to stochastic
functional differential equations in the G-framework (G-SFDEs). The
idea was generalized by Faizullah to G-SFDEs with non-Lipschitz
conditions \cite{f2}. He further extended the theory to develop the
$p$th moment estimates for the solutions to G-SFDEs \cite{f3,f4}.
The existence-uniqueness theory for neutral stochastic functional
differential equations driven by G-Brownian motion (G-NSFDEs) was
developed by Faizullah \cite{ff} and Faizullah et al. \cite{fb}. The
exponential stability, the $p$th moment exponential estimate and
stability with markovian switching for solutions to G-NSFDEs were
respectively given by Zhu et al. \cite{zlz}, Faizullah et al.
\cite{fm} and Li et al. \cite{ly}. However, to the best of our
knowledge, no text is available on the existence, uniqueness,
exponential estimate, error estimation for Picard approximate and
exact solution of stochastic functional differential equations
driven by G-Brownian under the nonlinear monotonicity conditions.
The aim of this article is to inaugurate the mentioned unavailable
literature. In addition, the existence, uniqueness, $L^2_G$ and
exponential estimates for solutions of G-SFDEs with weak nonlinear
monotonicity conditions are studied. Let $C((-\infty,0];\R^n)$ be
the collection of continuous functions from $(-\infty,0]$ to $\R^n$,
then for a given number $q>0$ we define the phase space with fading
memory $C_q((-\infty,0];\R^n)$ by
\begin{equation*} C_q((-\infty,0];\R^n)=\{\psi\in C((-\infty,0];\R^n):\lim_{\vartheta\rightarrow -\infty}e^{q\vartheta}\psi(\vartheta)\,\, \text{exists
in}\,\, \R^n \}.
\end{equation*}
 This space is complete with norm
$\|\psi\|_q=\sup_{-\infty<\vartheta\leq
0}e^{q\vartheta}|\psi(\vartheta)|<\infty$. The space
$C_q((-\infty,0];\R^n)$ is a Banach space of bounded and continuous
functions and $C_{q_1}\subseteq C_{q_2}$ for any $0< q_1\leq
q_2<\infty$ \cite{ks,wym}. Let $\mathcal{B}(C_q)$ be the
$\sigma$-algebra generated by $C_q$ and $C_q^0=\{\psi\in C_q:
\lim_{\vartheta\rightarrow-\infty}e^{q\vartheta}\psi(\vartheta)=0\}$.
Denote by $L^2(C_q)$ (resp. $L^2(C^0_q)$) the space of all
$\mathcal{F}$-measurable $C_q$-valued (resp. $C^0_q$-valued)
stochastic processes $\psi$ such that $E\|\psi\|_q^2<\infty$. Let
$(\Omega,\mathcal{F},\mathbb{P})$ be a complete probability space,
$B(t)$ be an $n$-dimensional G-Brownian motion and
$\mathcal{F}_t=\sigma\{B(v):0\leq v\leq t\}$ be the natural
filtration. Assume that the filtration $\{\mathcal{F}_t;t\geq0\}$
assures the usual conditions. Let $\mathcal{P}$ be the set of all
probability measures on $(C_q,\mathcal{B}(C_q))$ and $L_b(C_q)$ be
the collection of all  continuous bounded functionals. Let $f:
[0,T]\times C_q((-\infty,0];\R^n)\rightarrow\R^n$, $g:[0,T]\times
C_q((-\infty,0];\R^{n})\rightarrow\R^{n\times m}$ and $h:[0,T]\times
C_q((-\infty,0];\R^n)\rightarrow\R^{n\times m}$ be Borel measurable.
Consider the following stochastic functional differential equation
driven by G-Brownian motion
\begin{equation}\label{1} dy(t) = f (t, y_{t}) dt + g ( t,y_{t}) d \langle B, B
\rangle (t) + h (t, y_{t}) dB (t),
\end{equation}
on $t\in[0,T]$ with given initial condition  $\zeta(0)\in \R^n$ and
$ y_{t} = \{ y(t + \vartheta) : -\infty < \vartheta \leq 0\}$. The
coefficients $f$, $g$ and $h$ are given functions such that for all
$y\in\R^n$,  $f(.,y),g(.,y),h(.,y)\in M^2_G((-\infty,T];\R^n)$. For
problem \eqref{1}, the initial data is given as follows.
\begin{equation}\label{i} y_0=\zeta=\Big\{\zeta(\vartheta):-\infty<\vartheta\leq0\Big\},
\end{equation}
is $\mathcal{F}_0$-measurable, $C_q((-\infty,0];\R^n)$-value random
variable such that $\zeta\in M^2_G((-\infty,0];\R^n)$.
\begin{defn} A stochastic process $y(t)\in\R^n$, $t\in(-\infty,T]$, is said to be a solution of the above equation \ref{1} with the given initial data
\eqref{i}, if
\begin{itemize}
\item[${\bf(1)}$] For all $t\in[0,T]$, $y(t)$ is $\mathcal {F}_t$-adapted and
continuous.
\item[${\bf(2)}$] The coefficients $f(t,y_t)\in\mathcal {L}^1([0,T];\R^n)$, $g(t,y_t)\in \mathcal
{L}^1([0,T];\R^{n\times m})$ and $h(t,y_t)\in \mathcal
{L}^2([0,T];\R^{n\times m})$
\item[${\bf(3)}$] For each $t\in[0,T]$, $y(t) = \zeta+\int_{0}^tf (v, y_{v}) dv +\int_{0}^t g (v, y_{v}) d \langle B, B
\rangle (v) + \int_{0}^th (v, y_{v}) dB (v)$ q.s.
\end{itemize}
\end{defn}
The rest of the paper is organized as follows. Section 2 is devoted
to the basic notions and results required for the subsequent
sections of this article. Section 3 presents the boundedness of
solutions and contains the existence-uniqueness results with
monotone nonlinearity conditions for G-SFDEs. The error estimation
for Picard approximate solution $y^k(t)$ and exact solution $y(t)$
is determined in section 4. Section 5 gives the $L^2_G$ and
exponential estimates for the unique solution of G-SFDEs. With weak
monotonicity conditions, section 6 studies the existence and
uniqueness while section 7 the $L^2_G$ and exponential estimates for
G-SFDEs.

\section{Preliminaries}
Assume that $\mathcal {H}$ be a space of real valued functions
defined on a given non-empty set $\Omega$ and let $(\Omega,\mathcal
{H},\hat{\mathbb{E}})$ be a sublinear expectation space. Let
$\Omega$ denotes the space of all $\R^n$-valued continuous paths
$(w(t))_{t\geq 0}$ with $w(0)=0$ equipped with the distance
\begin{equation*}\rho(w^1,w^2)=\sum_{i=1}^{\infty}\frac{1}{2^i}\Big(\max_{t\in[0,i]}|w^1(t)-w^2(t)|\wedge1\Big).\end{equation*}
Let for any $w\in\Omega$ and $t\geq 0$, $B(t)=B(t,w)=w(t)$ be the
canonical process.  For any fixed $T\in[0,\infty)$, set
\begin{equation*}L_{ip}(\Omega_T)=\Big\{\phi(B(t_1),B(t_2),...,B(t_n)):n\geq1,t_{1},t_{2},...,t_{n}\in[0,T],\phi\in
C_{b.Lip}(\R^{n\times m}))\Big\},\end{equation*}
 where $C_{b.Lip}(\R^{n\times m})$ is a space of bounded Lipschitz functions and  $L_{ip}(\Omega_t)\subseteq L_{ip}(\Omega_T)$ for $t\leq
 T$,
$L_{ip}(\Omega)=\cup_{n=1}^{\infty}L_{ip}(\Omega_n)$. The completion
of $L_{ip}(\Omega)$ under the
 Banach norm $\hat{\mathbb{E}}[|.|^p]^{\frac{1}{p}}$, $p\geq 1$ is denoted by
 $L^p_{G}(\Omega)$, where
$L_{G}^p(\Omega_t)\subseteq L_{G}^p(\Omega_T)\subseteq
L_{G}^p(\Omega)$ for $0\leq t\leq T <\infty.$ Generated by the
canonical process $\{B(t)\}_{t\geq 0}$, the filtration is given by
$\mathcal {F}_t=\sigma\{B(s), 0\leq s \leq t\}$, $\mathcal
{F}=\{\mathcal {F}_t\}_{t\geq 0}$. Let $\pi_T=\{t_0,t_1,...,t_N\}$,
$0\leq t_0\leq t_1\leq...\leq t_N\leq\infty$ be a partition of
$[0,T].$ For all $N\geq 1$, $0=t_0<t_1<...<t_{N}=T$ and
$i=0,1,...,N-1$, define the space $M^{p,0}_G([0,T])$, $p\geq 1$ of
simple processes as
\begin{equation}\label{p1}M^{p,0}_G([0,T])=\Big\{\eta_t(w)=\sum_{i=0}^{N-1}\xi_{t_i}(w)I_{[t_i,t_{i+1}]}(t);\,\,\xi_{t_i}(w)\in L_G^p(\Omega_{t_{i}})\Big\}.\end{equation}
Let $M_{G}^{p}(0,T),$ $p\geq 1$ denotes the completion of
$M_{G}^{p,0}(0,T)$ with the norm given below
\begin{equation*}\|\eta\|=\Big\{\int_0^T\hat{\mathbb{E}}[|\eta(v)|^p]dv\Big\}^{1/p}.\end{equation*}
\begin{defn} For $\eta_t\in
M_G^{2,0}(0,T)$, the G-It\^{o}'s integral $I(\eta)$ is defined by
\begin{align*}
I(\eta)=\int_0^T\eta(v)dB^a(v)=\sum_{i=0}^{N-1}\xi_i\Big(B^a({t_{i+1}})-B^a({t_i})\Big).\end{align*}
A mapping $I:M^{2,0}_{G}(0,T)\mapsto L^2_G(\mathcal{F}_T)$  can be
continuously extended to $I:M^2_G(0,T)\mapsto L^2_G(\mathcal{F}_T)$
and for $\eta\in M^2_G(0,T)$ the G-It\^o integral is still defined
by
\begin{align*}
\int_0^T\eta(v)dB^a(v)=I(\eta).\end{align*}
\end{defn}
\begin{defn} The G-quadratic variation process $\{\langle
B^a\rangle(t)\}_{t\geq0}$ of G-Brownian motion is defined by
\begin{align*}\begin{split}&
\langle
B^a\rangle(t)=\lim_{N\rightarrow\infty}\sum_{i=0}^{N-1}\Big(B^a({t_{i+1}^N})-B^a({t_{i}^N})\Big)^2={B^a(t)}^2-2\int_0^tB^a(v)dB^a(v),\\&
\end{split}\end{align*}
which is an increasing process with $\langle B^a\rangle(0)=0$ and
for any $0\leq s\leq t$,
\begin{align*} \langle B^a\rangle(t)-\langle B^a\rangle(s)\leq
\sigma_{aa^\tau} (t-s).
\end{align*}
\end{defn}
Assume that $a, \hat{a} \in\R^n$ be two given vectors. Then the
mutual variation process of $B^a$ and $B^{\hat{a}}$ is defined by
$\langle B^a,B^{\hat{a}}\rangle=\frac{1}{4}[\langle
B^a+B^{\hat{a}}\rangle(t)-\langle B^a-B^{\hat{a}}\rangle(t)]$. A
mapping $H_{0,T}:M^{0,1}_{G}(0,T)\mapsto L^2_G(\mathcal{F}_T)$  is
defined by
\begin{align*} H_{0,T}(\eta)=\int_0^T\eta(v)d\langle B^a\rangle(v)= \sum_{i=0}^{N-1}\xi_i\Big(\langle
B^a\rangle_({t_{i+1}})-\langle B^a\rangle({t_{i}})\Big),
\end{align*}
which can be continuously extended to $M^1_G(0,T)$ and for $\eta\in
M^1_G(0,T)$ this is still defined by
\begin{align*}
\int_0^T\eta(v)d\langle B^a\rangle(v)=H_{0,T}(\eta).
\end{align*}
The G-It\^{o} integral and its quadratic variation process satisfy
the following properties \cite{p3,wzl}.
\begin{prop}
\begin{itemize}
\item[${\bf(1)}$] $\hat{\mathbb{E}}[\int_0^T\eta(v)dB(v)]=0,\,\,\, \textit{for all}\,\,\, \eta\in M^p_G(0,T)$.
\item[${\bf(2)}$]
$\hat{\mathbb{E}}[(\int_0^T\eta(v)dB(v))^2]=\hat{\mathbb{E}}[\int_0^T\eta^2(v)\langle
B, B\rangle(v)]\leq \bar{\sigma}^2
\mathbb{E}[\int_0^T\eta^2(v)dv],\,\,\, \textit{for all}\,\,\,
\eta\in M^2_G(0,T)$.
\item[${\bf(3)}$] $\hat{\mathbb{E}}[\int_0^T|\eta(v)|^pdv]\leq \int_0^T\hat{\mathbb{E}}|\eta(v)|^pdv,\,\,\,
\textit{for all}\,\,\, \eta\in M^p_G(0,T)$.
\end{itemize}
\end{prop}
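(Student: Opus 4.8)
The plan is to prove all three assertions first on the dense subspace of simple processes $M_G^{2,0}(0,T)$ (respectively $M_G^{p,0}(0,T)$), where each integral reduces to a finite sum, and then to extend to a general integrand by density together with the continuity of the G-It\^o integral operator $I:M_G^2(0,T)\to L_G^2(\mathcal{F}_T)$ recorded in its definition and the continuity of $\hat{\mathbb{E}}$ under the $M_G^p$-norm. For (1), I would take $\eta_t=\sum_{i=0}^{N-1}\xi_{t_i}I_{[t_i,t_{i+1}]}(t)$, so that $I(\eta)=\sum_{i=0}^{N-1}\xi_i\bigl(B(t_{i+1})-B(t_i)\bigr)$. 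Since each $\xi_i\in L_G^p(\Omega_{t_i})$ is $\mathcal{F}_{t_i}$-measurable while the increment $B(t_{i+1})-B(t_i)$ is independent of $\mathcal{F}_{t_i}$ and G-normally distributed with $\hat{\mathbb{E}}[B(t_{i+1})-B(t_i)]=0$, the tower property of the conditional G-expectation gives $\hat{\mathbb{E}}[\xi_i(B(t_{i+1})-B(t_i))]=\hat{\mathbb{E}}[\xi_i\,\hat{\mathbb{E}}_{t_i}(B(t_{i+1})-B(t_i))]=0$ for every $i$; summing yields $\hat{\mathbb{E}}[I(\eta)]=0$, and passing to the limit in $M_G^p$ closes the argument.

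For (2) I would expand $\bigl(I(\eta)\bigr)^2=\sum_{i,j}\xi_i\xi_j(B(t_{i+1})-B(t_i))(B(t_{j+1})-B(t_j))$. The off-diagonal terms with $i\neq j$ vanish under $\hat{\mathbb{E}}$ by the same conditioning argument used in (1), so only the diagonal survives, and its G-expectation is $\hat{\mathbb{E}}[\sum_i\xi_i^2(B(t_{i+1})-B(t_i))^2]$, which by the definition of the quadratic variation process equals $\hat{\mathbb{E}}[\int_0^T\eta^2(v)\,d\langle B,B\rangle(v)]$; this gives the middle equality. To obtain the stated bound I would invoke the increment estimate $\langle B^a\rangle(t)-\langle B^a\rangle(s)\le\sigma_{aa^\tau}(t-s)$, which dominates $d\langle B,B\rangle(v)$ by $\bar\sigma^2\,dv$ and hence yields $\hat{\mathbb{E}}[\int_0^T\eta^2\,d\langle B,B\rangle]\le\bar\sigma^2\,\hat{\mathbb{E}}[\int_0^T\eta^2(v)\,dv]$. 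Density of $M_G^{2,0}$ in $M_G^2$ then extends both the identity and the inequality to every $\eta\in M_G^2(0,T)$.

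For (3) I would use only the subadditivity of the sublinear expectation. Approximating $\int_0^T|\eta(v)|^p\,dv$ by Riemann sums $\sum_i|\eta(v_i)|^p\Delta v_i$ and applying $\hat{\mathbb{E}}[\,\cdot+\cdot\,]\le\hat{\mathbb{E}}[\cdot]+\hat{\mathbb{E}}[\cdot]$ termwise gives $\hat{\mathbb{E}}[\sum_i|\eta(v_i)|^p\Delta v_i]\le\sum_i\hat{\mathbb{E}}[|\eta(v_i)|^p]\Delta v_i$; letting the mesh tend to zero produces $\hat{\mathbb{E}}[\int_0^T|\eta(v)|^p\,dv]\le\int_0^T\hat{\mathbb{E}}[|\eta(v)|^p]\,dv$.

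The main obstacle is part (2): unlike the classical It\^o isometry, the cancellation of the cross terms and the identification of the diagonal with the quadratic variation must be justified through the conditional G-expectation rather than through ordinary orthogonality, and the passage from $d\langle B,B\rangle$ to $\bar\sigma^2\,dv$ rests essentially on the one-sided bound $\langle B^a\rangle(t)-\langle B^a\rangle(s)\le\sigma_{aa^\tau}(t-s)$, which reflects the sublinear (upper-variance) nature of $\hat{\mathbb{E}}$ and has no classical analogue.
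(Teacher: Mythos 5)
The paper itself offers no proof of this proposition: it is stated as a preliminary and attributed to \cite{p3,wzl}, so the benchmark is the standard argument in Peng's notes, whose overall architecture (prove on simple processes, extend by density and continuity of $I$) you correctly reproduce. Two of your steps, however, would fail as written. First, in both (1) and the cross-term cancellation of (2) you take the expectation of a finite sum term by term. Under a sublinear expectation only subadditivity is available, so $\hat{\mathbb{E}}[\sum_i X_i]=\sum_i\hat{\mathbb{E}}[X_i]$ is not a legitimate step; the correct mechanism is backward induction: condition on $\mathcal{F}_{t_{N-1}}$, use translation invariance $\hat{\mathbb{E}}_{t_{N-1}}[Y+X]=Y+\hat{\mathbb{E}}_{t_{N-1}}[X]$ for $\mathcal{F}_{t_{N-1}}$-measurable $Y$ to strip off the last increment, and iterate. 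Relatedly, the factorization $\hat{\mathbb{E}}[\xi_i(B(t_{i+1})-B(t_i))]=\hat{\mathbb{E}}[\xi_i\,\hat{\mathbb{E}}_{t_i}(B(t_{i+1})-B(t_i))]$ you invoke is not valid in general: what holds is $\hat{\mathbb{E}}_{t_i}[\xi\,\Delta B]=\xi^{+}\hat{\mathbb{E}}[\Delta B]+\xi^{-}\hat{\mathbb{E}}[-\Delta B]$, and this vanishes precisely because the G-normal increment is \emph{symmetric}, $\hat{\mathbb{E}}[\Delta B]=\hat{\mathbb{E}}[-\Delta B]=0$; mean zero alone would not suffice.

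Second, in (2) the diagonal $\sum_i\xi_i^2(\Delta B_i)^2$ does \emph{not} equal $\int_0^T\eta^2(v)\,d\langle B,B\rangle(v)$ for a fixed partition — the quadratic variation is a limit, and for a simple $\eta$ the two differ by a martingale term. The standard repair uses the pathwise identity $(\Delta B_i)^2=2\int_{t_i}^{t_{i+1}}(B(v)-B(t_i))\,dB(v)+\langle B\rangle(t_{i+1})-\langle B\rangle(t_i)$ (contained in the paper's Definition of the quadratic variation), which exhibits $X:=(I(\eta))^2-\int_0^T\eta^2\,d\langle B\rangle$ as a sum of $dB$-integrals, hence a symmetric G-martingale with $\hat{\mathbb{E}}[X]=\hat{\mathbb{E}}[-X]=0$ by part (1); the sandwich $\hat{\mathbb{E}}[A]-\hat{\mathbb{E}}[-X]\leq\hat{\mathbb{E}}[A+X]\leq\hat{\mathbb{E}}[A]+\hat{\mathbb{E}}[X]$ then yields the claimed equality. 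Once this is in place, your passage to $\bar{\sigma}^2\,dv$ via monotonicity of $\hat{\mathbb{E}}$ (using $\eta^2\geq0$ and the q.s.\ bound $\langle B\rangle(t)-\langle B\rangle(s)\leq\bar{\sigma}^2(t-s)$) and the density extension are sound, and part (3) is correct provided you prove it on simple processes, where it follows exactly from subadditivity and positive homogeneity, rather than through Riemann sums for a general $\eta\in M_G^p$, whose pathwise convergence under $\hat{\mathbb{E}}$ you have not justified.
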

The concept of G-capacity and lemma \ref{l2} can be found in
\cite{dhp}.
\begin{defn}
Let $\mathcal {B}(\Omega)$ be a Borel $\sigma$-algebra of $\Omega$
and $\mathcal{P}$ be a collection of all probability measures on
$(\Omega, \mathcal {B}(\Omega)$. Then the G-capacity denoted by
$\hat{C}$ is defined as the following
\begin{equation*}\hat{C}(A)=\sup_{\mathbb{P}\in\mathcal{P}}\mathbb{P}(A),\end{equation*}
where set $A\in\mathcal {B}(\Omega)$.
\end{defn}
\begin{defn}
A set $A\in\mathcal {B}(\Omega)$ is said to be polar if its capacity
is zero i.e. $\hat{C}(A)=0$ and a property holds quasi-surely (q.s)
if it holds outside a polar set.
\end{defn}
\begin{lem}\label{l2} Let $y\in L^p$ and $\hat{\mathbb{E}}|y|^p<\infty$. Then for each
$\alpha>0,$ the G-Markov inequality is defined by
\begin{equation*}\hat{C}(|y|>\alpha)\leq \frac{\hat{\mathbb{E}}[|y|^p]}{\alpha}.\end{equation*}
\end{lem}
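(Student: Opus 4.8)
The plan is to derive the capacity bound from the ordinary Markov inequality under each representing probability measure and then pass to the supremum over $\mathcal{P}$. The two ingredients I would invoke are the definition $\hat{C}(A)=\sup_{\mathbb{P}\in\mathcal{P}}\mathbb{P}(A)$ recorded just above, and the representation $\hat{\mathbb{E}}[X]=\sup_{\mathbb{P}\in\mathcal{P}}E_{\mathbb{P}}[X]$ of the G-expectation by the same family $\mathcal{P}$ (see \cite{dhp}). The standing hypothesis $y\in L^p$ with $\hat{\mathbb{E}}|y|^p<\infty$ forces $E_{\mathbb{P}}[|y|^p]<\infty$ for every $\mathbb{P}\in\mathcal{P}$, so each classical integral written below is finite and the supremum on the right-hand side is the finite quantity $\hat{\mathbb{E}}[|y|^p]$.

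First I would fix an arbitrary $\mathbb{P}\in\mathcal{P}$ and apply the classical Markov inequality to the nonnegative, $\mathbb{P}$-integrable random variable $|y|^p$, comparing it against the level $\alpha>0$. This produces the measure-wise estimate
\begin{equation*}
\mathbb{P}\big(|y|>\alpha\big)\le \frac{E_{\mathbb{P}}\big[|y|^p\big]}{\alpha},
\end{equation*}
in which the controlling quantity is the $p$-th moment $E_{\mathbb{P}}[|y|^p]$ and the denominator is exactly $\alpha$. The useful point is that the right-hand side depends on $\mathbb{P}$ only through the single number $E_{\mathbb{P}}[|y|^p]$ while $\alpha$ is held fixed.

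Next I would take the supremum over $\mathbb{P}\in\mathcal{P}$ on both sides. By the definition of the G-capacity the left-hand side becomes $\hat{C}(|y|>\alpha)$, while on the right the supremum commutes with the fixed positive factor $\alpha^{-1}$ and the representation theorem turns $\sup_{\mathbb{P}}E_{\mathbb{P}}[|y|^p]$ into $\hat{\mathbb{E}}[|y|^p]$. Hence
\begin{equation*}
\hat{C}\big(|y|>\alpha\big)=\sup_{\mathbb{P}\in\mathcal{P}}\mathbb{P}\big(|y|>\alpha\big)\le \frac{1}{\alpha}\sup_{\mathbb{P}\in\mathcal{P}}E_{\mathbb{P}}\big[|y|^p\big]=\frac{\hat{\mathbb{E}}\big[|y|^p\big]}{\alpha},
\end{equation*}
which is the asserted G-Markov inequality.

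The step I expect to require the most care is the bookkeeping of the exponent against the threshold in the measure-wise Markov estimate: one must be precise about the level at which the nonnegative variable $|y|^p$ is compared, so that the denominator comes out as $\alpha$ and the numerator as the $p$-th moment, matching the form displayed in the statement. Once this comparison is pinned down, the remainder — interchanging the supremum over $\mathcal{P}$ with the division by the fixed constant $\alpha$, and identifying $\sup_{\mathbb{P}}E_{\mathbb{P}}[|y|^p]$ with $\hat{\mathbb{E}}[|y|^p]$ — is precisely where the representation of the sublinear expectation from \cite{dhp} and the integrability hypothesis $\hat{\mathbb{E}}|y|^p<\infty$ are used, and no further analytic input is needed.
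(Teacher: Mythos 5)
The paper gives no proof of this lemma at all (it is quoted from \cite{dhp}), so your proposal is measured against the standard argument from that source, and your outer structure is exactly right: a measure-wise classical inequality followed by the supremum over $\mathcal{P}$, using $\hat{C}(A)=\sup_{\mathbb{P}\in\mathcal{P}}\mathbb{P}(A)$ together with $E_{\mathbb{P}}[|y|^p]\le\hat{\mathbb{E}}[|y|^p]$. (In fact you only need this domination, not the representation as an equality.)

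However, there is a genuine gap at precisely the step you flagged. The classical Markov inequality applied to the nonnegative variable $|y|^p$ at level $\beta>0$ controls the event $\{|y|^p>\beta\}$, whereas the event in the lemma is $\{|y|>\alpha\}=\{|y|^p>\alpha^p\}$; the correct choice is therefore $\beta=\alpha^p$, which yields
\begin{equation*}
\mathbb{P}\big(|y|>\alpha\big)\le\frac{E_{\mathbb{P}}[|y|^p]}{\alpha^p},
\qquad\text{hence}\qquad
\hat{C}\big(|y|>\alpha\big)\le\frac{\hat{\mathbb{E}}[|y|^p]}{\alpha^p},
\end{equation*}
the form in which the Chebyshev--Markov inequality actually appears in \cite{dhp}. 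Your displayed measure-wise estimate with denominator $\alpha$ is not an instance of Markov and is false in general: take $p=2$, $y\equiv 1/2$ and $\alpha=3/10$; then $\mathbb{P}(|y|>\alpha)=1$ while $E_{\mathbb{P}}[|y|^2]/\alpha=5/6<1$. What your argument does prove, correctly, is $\hat{C}(|y|^p>\alpha)\le\hat{\mathbb{E}}[|y|^p]/\alpha$, i.e.\ the $p=1$ Markov inequality applied to the variable $|y|^p$. The blame is partly the paper's: the lemma's display evidently carries a typo ($\alpha$ where $\alpha^p$ should stand), and everywhere the lemma is invoked (the Borel--Cantelli steps in Theorem \ref{thm1} and Theorem \ref{thm2}) it is used in the $p=1$ form applied to squared quantities, where event and level do match. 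Still, a proof of the stated inequality must either insert the exponent $\alpha^p$ or restrict to $p=1$; as written, comparing $|y|^p$ against the level $\alpha$ bounds the wrong event.
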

For the proof of the following lemmas \ref{l3} and \ref{l4} see
\cite{g}.
\begin{lem}\label{l3} Let $p\geq 2$, $\eta\in M_G^2(0,T)$, $a\in\R^n$ and $y(t)=\int_0^t
\eta(v)dB^a(v)$. Then there exists a continuous modification
$\bar{y}(t)$ of $y(t)$, that is, on some $\bar{\Omega}\subset\Omega$
with $\hat{C}(\bar{\Omega}^c)=0$ and for all $t\in[0,T]$,
$\hat{C}(|y(t)-\bar{y}|\neq 0)=0$ such that
\begin{equation*}
\hat{\mathbb{E}}\Big[\sup_{s\leq v\leq
t}|\bar{y}(v)-\bar{y}(s)|^p\Big]\leq
\hat{K}\sigma_{aa^{\tau}}^{\frac{p}{2}}\hat{\mathbb{E}}\Big(\int_s^t|\eta(v)|^2dv\Big)^{\frac{p}{2}},
\end{equation*}
where $0<\hat{K}<\infty$ is a positive constant.
\end{lem}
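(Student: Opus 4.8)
The statement is a Burkholder--Davis--Gundy (BDG) type inequality for the G-It\^{o} integral together with the existence of a continuous modification. My plan is to split the argument into three parts: a moment estimate for the martingale increment, the construction of the continuous modification via a quasi-sure Kolmogorov--Chentsov criterion, and the transfer of the estimate to the supremum. I would first establish the inequality for simple integrands $\eta\in M_G^{2,0}(0,T)$ and then pass to $\eta\in M_G^2(0,T)$ by density, using the continuous extension of the G-It\^{o} integral recorded earlier.

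The workhorse for the moment estimate is the Denis--Hu--Peng representation $\hat{\mathbb{E}}[X]=\sup_{P\in\mathcal{P}}E_P[X]$ from \cite{dhp}. Under each $P\in\mathcal{P}$ the canonical process $B^a$ is a continuous martingale whose quadratic variation agrees q.s. with $\langle B^a\rangle$, so the classical BDG inequality holds under $P$ with a constant $C_p$ depending only on $p$; crucially this constant is uniform over the whole family $\mathcal{P}$. Taking the supremum over $P$ therefore yields
$$\hat{\mathbb{E}}\Big[\sup_{s\leq v\leq t}|y(v)-y(s)|^p\Big]\leq C_p\,\hat{\mathbb{E}}\Big[\Big(\int_s^t|\eta(v)|^2\,d\langle B^a\rangle(v)\Big)^{p/2}\Big].$$
Replacing $d\langle B^a\rangle(v)$ by $\sigma_{aa^\tau}\,dv$ via the quadratic-variation bound $\langle B^a\rangle(t)-\langle B^a\rangle(s)\leq\sigma_{aa^\tau}(t-s)$ then produces exactly the right-hand side of the claim, with $\hat{K}=C_p$.

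For the continuous modification I would run a Kolmogorov--Chentsov argument in the capacity setting. Fixing any $p>2$, the increment estimate gives $\hat{\mathbb{E}}[|y(t)-y(s)|^p]\leq\hat{K}\sigma_{aa^\tau}^{p/2}|t-s|^{p/2}$ with exponent $p/2>1$; feeding this into the G-Markov inequality (Lemma \ref{l2}) to bound the capacities of dyadic increments, and then applying a Borel--Cantelli argument with respect to $\hat{C}$, produces a single set $\bar{\Omega}$ with $\hat{C}(\bar{\Omega}^c)=0$ on which $y$ is uniformly continuous along dyadic rationals. This extends to a genuinely continuous process $\bar{y}$ that coincides with $y$ q.s. at every fixed $t$, so that $\hat{C}(|y(t)-\bar{y}(t)|\neq0)=0$. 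Because $\bar{y}$ is continuous, $\sup_{s\leq v\leq t}|\bar{y}(v)-\bar{y}(s)|$ equals its supremum over rationals, hence is measurable, and the estimate of the second paragraph passes verbatim to $\bar{y}$.

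The hard part will be the sublinearity of $\hat{\mathbb{E}}$: there is no single reference probability and no linearity, so every interchange of limit and expectation and every passage through a null set must be routed through the representation $\hat{\mathbb{E}}=\sup_P E_P$ and performed quasi-surely. Concretely, the two delicate points are verifying that the classical BDG constant is genuinely uniform across $\mathcal{P}$ and that the Kolmogorov construction yields continuity modulo a single polar set rather than $P$-dependent null sets; these are exactly the features that separate the G-framework from the linear It\^{o} theory.
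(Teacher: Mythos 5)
First, a point of comparison: the paper does not prove this lemma at all — it is quoted verbatim from Gao \cite{g} (``For the proof of the following lemmas \ref{l3} and \ref{l4} see \cite{g}''), so there is no in-paper argument to measure you against, only the standard proof in the literature. Judged on its merits, your first part is sound and is essentially how the moment estimate is obtained: represent $\hat{\mathbb{E}}=\sup_{P\in\mathcal{P}}E_P$, apply the classical BDG inequality under each $P$ (whose constant depends only on $p$ and is therefore automatically uniform over $\mathcal{P}$), identify $\langle B^a\rangle$ quasi-surely with the $P$-quadratic variation of $B^a$, and invoke $\langle B^a\rangle(t)-\langle B^a\rangle(s)\le\sigma_{aa^\tau}(t-s)$ — modulo the (fixable) point that one must first verify, for simple integrands and then by density, that the G-It\^o integral agrees $P$-a.s.\ with the classical It\^o integral under each $P$, so that BDG applies to it at all.

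The genuine gap is the modification step. Your Kolmogorov--Chentsov argument rests on the increment bound $\hat{\mathbb{E}}|y(t)-y(s)|^p\le \hat{K}\sigma_{aa^\tau}^{p/2}|t-s|^{p/2}$, but this does not follow from the estimate you proved for general $\eta\in M_G^2(0,T)$: the right-hand side there is $\hat{K}\sigma_{aa^\tau}^{p/2}\hat{\mathbb{E}}\big(\int_s^t|\eta(v)|^2dv\big)^{p/2}$, which is $O(|t-s|^{p/2})$ only when $\eta$ is bounded, and for $p>2$ may even be infinite when $\eta$ is merely in $M_G^2$. Worse, for $p=2$ — which the lemma includes, and which is the only moment guaranteed by the hypothesis $\eta\in M_G^2$ — the resulting exponent $|t-s|^1$ is too weak for any Kolmogorov--Chentsov criterion. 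The standard repair (and the route taken in \cite{g}) avoids Kolmogorov entirely: for simple $\eta\in M_G^{2,0}(0,T)$ the integral is continuous in $t$ for every path by construction, since the canonical process is continuous; for general $\eta$ take simple $\eta_n\to\eta$ in $M_G^2$, use the $p=2$ maximal estimate to get $\hat{\mathbb{E}}\big[\sup_{0\le t\le T}|y_n(t)-y_m(t)|^2\big]\le C\int_0^T\hat{\mathbb{E}}|\eta_n(v)-\eta_m(v)|^2dv\to0$, and combine the G-Markov inequality (Lemma \ref{l2}) with a Borel--Cantelli argument for $\hat{C}$ (valid by countable subadditivity of the capacity) to extract a subsequence converging uniformly on $[0,T]$ outside a single polar set; the limit $\bar{y}$ is then continuous, and is a modification because $y_n(t)\to y(t)$ in $L^2_G$ for each fixed $t$. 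Your closing paragraph correctly identifies the single-polar-set issue as the crux, but your dyadic construction cannot get off the ground without the unavailable increment bound, whereas the density argument needs nothing beyond the $p=2$ case you already have.
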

\begin{lem}\label{l4} Let $p\geq 1$, $\eta\in M_G^p(0,T)$ and $a,\hat{a}\in\R^n$, then there exists a continuous modification
$\bar{y}^{a,\hat{a}}(t)$ of  $y^{a,\hat{a}}(t)=\int_0^t \eta(v)d
\langle B^{a}, B^{\hat{a}} \rangle (v)$ such that for $0\leq s\leq
v\leq t\leq T$,
\begin{equation*}
\hat{\mathbb{E}}\Big[\sup_{0\leq s\leq v\leq
t}|\bar{y}^{a,\hat{a}}(v)-\bar{y}^{a,\hat{a}}(s)|^p\Big]\leq
\Big(\frac{1}{4}\sigma_{(a+\hat{a})(a-\hat{a})^\tau}\Big)^{p}(t-s)^{p-1}\hat{\mathbb{E}}\int_s^t|\eta(v)|^pdv,
\end{equation*}
\end{lem}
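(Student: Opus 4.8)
The plan is to exploit that, unlike the G-It\^o integral of Lemma \ref{l3}, the integral $y^{a,\hat a}(t)=\int_0^t\eta(v)\,d\langle B^a,B^{\hat a}\rangle(v)$ is a pathwise Stieltjes integral against a continuous process of finite variation, so no martingale (Burkholder--Davis--Gundy type) machinery is needed beyond a control on the total variation of the integrator. First I would record the key pathwise bound on the mutual variation process. From the polarization identity $\langle B^a,B^{\hat a}\rangle=\frac14\big[\langle B^{a+\hat a}\rangle-\langle B^{a-\hat a}\rangle\big]$ together with the increment estimate for the G-quadratic variation recorded in its definition, the process $\langle B^a,B^{\hat a}\rangle$ is a difference of two increasing continuous processes, hence absolutely continuous; moreover, quasi-surely,
\begin{equation*}
\big|\langle B^a,B^{\hat a}\rangle(t)-\langle B^a,B^{\hat a}\rangle(s)\big|\leq \tfrac14\,\sigma_{(a+\hat a)(a-\hat a)^\tau}\,(t-s),\qquad 0\leq s\leq t\leq T,
\end{equation*}
the constant $\tfrac14\sigma_{(a+\hat a)(a-\hat a)^\tau}$ being the bound on the density of the integrator supplied by the structure of the $\sigma$-function in the G-setting.

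Next I would prove the estimate first for simple integrands $\eta\in M^{p,0}_G(0,T)$. For such $\eta$ the integral is a finite sum and $t\mapsto y^{a,\hat a}(t)$ is manifestly continuous, so one may take $\bar y^{a,\hat a}=y^{a,\hat a}$. Writing $y^{a,\hat a}(v)-y^{a,\hat a}(s)=\int_s^v\eta\,d\langle B^a,B^{\hat a}\rangle$ and using the variation bound above, one has pathwise
\begin{equation*}
\sup_{s\leq v\leq t}\big|y^{a,\hat a}(v)-y^{a,\hat a}(s)\big|\leq \tfrac14\sigma_{(a+\hat a)(a-\hat a)^\tau}\int_s^t|\eta(v)|\,dv.
\end{equation*}
Raising to the power $p$ and applying the elementary H\"older (Jensen) inequality $\big(\int_s^t|\eta|\,dv\big)^p\leq (t-s)^{p-1}\int_s^t|\eta|^p\,dv$, then taking the sublinear expectation $\hat{\mathbb{E}}$, yields the asserted bound for simple $\eta$ with the stated constant.

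Finally I would remove the simplicity assumption by density. Choosing simple processes $\eta_k\to\eta$ in the $M^p_G$-norm, the estimate just proved (applied to the differences $\eta_k-\eta_l$) shows that the corresponding integrals form a Cauchy sequence and thus converge to $y^{a,\hat a}$; combining the same estimate with the G-Markov inequality (Lemma \ref{l2}) lets one extract a subsequence along which the continuous versions converge uniformly on $[0,T]$ outside a polar set, producing the continuous modification $\bar y^{a,\hat a}$, after which the stated inequality passes to the limit by Fatou's lemma. The main obstacle is precisely this last step: in the quasi-sure, capacity-based framework one cannot invoke a single-measure Borel--Cantelli argument, so the uniform-in-time convergence of the approximating integrals must be controlled through the capacity $\hat{C}$ via Lemma \ref{l2}, and care is required to ensure that the resulting modification is continuous quasi-surely rather than merely under each $\mathbb{P}\in\mathcal{P}$.
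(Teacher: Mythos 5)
You should note first that the paper itself contains no proof of this lemma: it is quoted among the preliminaries with the explicit remark that the proofs of Lemmas \ref{l3} and \ref{l4} are to be found in \cite{g}, so the only comparison available is with the standard argument of that reference --- and your proposal reconstructs essentially that argument. Your outline is the right one: since $\langle B^{a},B^{\hat{a}}\rangle$ is a continuous finite-variation process, no Burkholder--Davis--Gundy machinery is needed; polarization together with $0\leq\langle B^{b}\rangle(t)-\langle B^{b}\rangle(s)\leq\sigma_{bb^{\tau}}(t-s)$ gives a pathwise Lipschitz bound on the integrator (your implicit observation that an increment bound valid on \emph{every} subinterval sums over partitions to a total-variation bound is correct, and it is this, not merely ``difference of two increasing processes,'' that yields absolute continuity with bounded density); the estimate for simple integrands then follows from the pathwise supremum bound plus the H\"older step $(\int_s^t|\eta|dv)^p\leq(t-s)^{p-1}\int_s^t|\eta|^pdv$, and the extension to $M_G^p$ goes by density. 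Your closing worry is also easily discharged: the Borel--Cantelli lemma survives in the capacity framework because $\hat{C}$ is countably subadditive, and the paper itself runs exactly this capacity-based Borel--Cantelli argument via Lemma \ref{l2} in the proof of Theorem \ref{thm1}, so the quasi-sure uniform convergence and continuity of the modification are obtained precisely as you describe.

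The one genuine soft spot is the constant, and it is the quantitative crux. From the ingredients you actually invoke, the increments satisfy only $-\frac{1}{4}\sigma_{(a-\hat{a})(a-\hat{a})^{\tau}}(t-s)\leq\langle B^{a},B^{\hat{a}}\rangle(t)-\langle B^{a},B^{\hat{a}}\rangle(s)\leq\frac{1}{4}\sigma_{(a+\hat{a})(a+\hat{a})^{\tau}}(t-s)$, whence a density bound $\frac{1}{4}\max\{\sigma_{(a+\hat{a})(a+\hat{a})^{\tau}},\sigma_{(a-\hat{a})(a-\hat{a})^{\tau}}\}$; you then assert, without derivation, that this may be written as $\frac{1}{4}\sigma_{(a+\hat{a})(a-\hat{a})^{\tau}}$, ``supplied by the structure of the $\sigma$-function,'' which is exactly the point needing an argument. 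Indeed, under the usual reading $\sigma_{A}=2G(A)$ the subscript matrix $(a+\hat{a})(a-\hat{a})^{\tau}$ degenerates --- it vanishes when $a=\hat{a}$, while the left-hand side of the lemma is then $\hat{\mathbb{E}}[\sup_{s\leq v\leq t}|\int_s^v\eta\,d\langle B^{a}\rangle|^p]$, which is positive in general --- so the stated constant must be read as shorthand for the symmetric bound above. This is arguably a defect of the transcribed statement rather than of your strategy, but a complete write-up should derive the constant your argument actually produces instead of conjuring the one printed in the lemma.
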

The following lemma can be found in \cite{m}.
\begin{lem}\label{l7} Let $a,b\in\R^n$ and
$\hat{c}>0.$ Then
\begin{align*}
|a+b|^2\leq (1+\hat{c})|a|^2+(1+\hat{c}^{-1})|b|^2.
\end{align*}
\end{lem}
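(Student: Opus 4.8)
The plan is to reduce this vector inequality to an elementary scalar estimate via expansion of the inner product followed by a completion-of-the-square argument. First I would expand the left-hand side using the Euclidean inner product on $\R^n$, writing $|a+b|^2=|a|^2+2\langle a,b\rangle+|b|^2$. Comparing this with the right-hand side $(1+\hat{c})|a|^2+(1+\hat{c}^{-1})|b|^2=|a|^2+|b|^2+\hat{c}|a|^2+\hat{c}^{-1}|b|^2$, the claimed inequality is seen to be equivalent, after cancelling the common terms $|a|^2+|b|^2$, to the single cross-term bound
\begin{equation*}
2\langle a,b\rangle\leq \hat{c}\,|a|^2+\hat{c}^{-1}|b|^2.
\end{equation*}

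Next I would handle this reduced inequality in two steps. By the Cauchy--Schwarz inequality we have $\langle a,b\rangle\leq |a|\,|b|$, so it suffices to establish the scalar estimate $2|a|\,|b|\leq \hat{c}\,|a|^2+\hat{c}^{-1}|b|^2$. This last bound is exactly a weighted arithmetic--geometric mean (Young) inequality, and it follows immediately from the observation that the square of the real number $\sqrt{\hat{c}}\,|a|-\hat{c}^{-1/2}|b|$ is nonnegative, since $\hat{c}>0$ guarantees both $\sqrt{\hat{c}}$ and $\hat{c}^{-1/2}$ are well defined; expanding $\bigl(\sqrt{\hat{c}}\,|a|-\hat{c}^{-1/2}|b|\bigr)^2\geq 0$ gives precisely $\hat{c}\,|a|^2-2|a|\,|b|+\hat{c}^{-1}|b|^2\geq 0$. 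Chaining these two steps yields $2\langle a,b\rangle\leq 2|a|\,|b|\leq \hat{c}\,|a|^2+\hat{c}^{-1}|b|^2$, and adding back $|a|^2+|b|^2$ recovers the stated inequality.

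There is no genuine obstacle here: the result is a purely algebraic consequence of positivity of a square, and the only point requiring any care is the passage from the vector setting to the scalar one, which is cleanly absorbed by the single application of Cauchy--Schwarz. I would remark in passing that equality in the final estimate forces $b=\hat{c}\,a$ (collinearity from the Cauchy--Schwarz step together with the matching case of the Young inequality), which confirms the constants $1+\hat{c}$ and $1+\hat{c}^{-1}$ are the sharp choices for the given splitting and explains why this form is the convenient one for the subsequent a priori and error estimates.
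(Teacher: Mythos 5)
Your proof is correct and complete: the reduction to $2\langle a,b\rangle\leq\hat{c}|a|^2+\hat{c}^{-1}|b|^2$ via expansion, followed by Cauchy--Schwarz and the nonnegativity of $\bigl(\sqrt{\hat{c}}\,|a|-\hat{c}^{-1/2}|b|\bigr)^2$, is exactly the standard argument. The paper itself gives no proof, merely citing Mao's book, and your argument coincides with the classical one found there, so there is nothing further to reconcile; your closing remark on the equality case $b=\hat{c}\,a$ and sharpness of the constants is a correct bonus beyond what the paper records.
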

The following lemma is borrowed from \cite{f7}.
\begin{lem}\label{Lf} Let  $p\geq 2$ and $\lambda< pq$. Then for any $\zeta\in C_q((-\infty,0];\R^n)$,
\begin{equation*}\label{}
\hat{\mathbb{E}}\|y_t\|^p_q\leq e^{-\lambda t}
\hat{\mathbb{E}}\|\zeta\|_q^p+\hat{\mathbb{E}}\Big[\sup_{0<v\leq
t}|y(v)|^p\Big]. \end{equation*}
\end{lem}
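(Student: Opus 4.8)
The plan is to unwind the definition of the fading-memory norm on the segment $y_t$, reduce it to a weighted supremum over the real line, and then split that supremum at the origin so that the part over $(-\infty,0]$ reproduces $\|\zeta\|_q$ while the part over $(0,t]$ is absorbed into the running supremum of $|y|$. Recall that $y_t=\{y(t+\vartheta):-\infty<\vartheta\leq0\}$, so by definition
\begin{equation*}
\|y_t\|_q=\sup_{-\infty<\vartheta\leq0}e^{q\vartheta}|y(t+\vartheta)|.
\end{equation*}
First I would substitute $s=t+\vartheta$, which turns this into $\|y_t\|_q=e^{-qt}\sup_{-\infty<s\leq t}e^{qs}|y(s)|$, factoring out the decaying weight $e^{-qt}$ that will eventually supply the exponential gain.

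Next, since the supremum over $(-\infty,t]$ is the maximum of the suprema over $(-\infty,0]$ and over $(0,t]$, I would write
\begin{equation*}
\|y_t\|_q=e^{-qt}\max\Big\{\sup_{-\infty<s\leq0}e^{qs}|y(s)|,\ \sup_{0<s\leq t}e^{qs}|y(s)|\Big\}.
\end{equation*}
On $(-\infty,0]$ the initial datum gives $y(s)=\zeta(s)$, so the first term is exactly $\|\zeta\|_q$; on $(0,t]$, the monotonicity of $e^{qs}$ (here $q>0$) gives $e^{qs}\leq e^{qt}$, so the second term is bounded by $e^{qt}\sup_{0<s\leq t}|y(s)|$. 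Placing the endpoint $s=0$ in the first piece is what produces the open interval $0<v\leq t$ in the statement, since $|y(0)|=|\zeta(0)|\leq\|\zeta\|_q$ is already accounted for.

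The key algebraic step, and the one that yields the clean constant $1$ in front of the running supremum rather than an unwanted factor $2^{p-1}$, is to raise to the power $p$ at the level of the maximum: using that $x\mapsto x^p$ is nondecreasing on $[0,\infty)$ one has $(\max\{a,b\})^p=\max\{a^p,b^p\}\leq a^p+b^p$, hence
\begin{equation*}
\|y_t\|_q^p\leq e^{-pqt}\Big(\|\zeta\|_q^p+e^{pqt}\sup_{0<s\leq t}|y(s)|^p\Big)=e^{-pqt}\|\zeta\|_q^p+\sup_{0<s\leq t}|y(s)|^p.
\end{equation*}
Applying $\hat{\mathbb{E}}$ and using its subadditivity (rather than linearity, since we are in the sublinear setting) then gives $\hat{\mathbb{E}}\|y_t\|_q^p\leq e^{-pqt}\hat{\mathbb{E}}\|\zeta\|_q^p+\hat{\mathbb{E}}[\sup_{0<s\leq t}|y(s)|^p]$. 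Finally, since $\lambda<pq$ and $t\geq0$ we have $e^{-pqt}\leq e^{-\lambda t}$, which upgrades the first coefficient to the asserted $e^{-\lambda t}$ and completes the argument.

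I expect no serious obstacle, as the estimate is essentially careful bookkeeping of a weighted supremum; the only points demanding attention are to take the $p$-th power at the level of the maximum so that $(\max)^p=\max(\cdot^p)\leq\text{sum}$ (avoiding a spurious constant that a triangle-inequality route would introduce), to split the supremum so that the endpoint falls into the initial-data term, and to invoke subadditivity of the sublinear expectation in the last step.
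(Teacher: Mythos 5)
The paper states Lemma \ref{Lf} without proof, quoting it from \cite{f7}, so there is no in-paper argument to compare against; your proof is correct and is precisely the standard derivation for such fading-memory estimates: substitute $s=t+\vartheta$ to write $\|y_t\|_q=e^{-qt}\sup_{-\infty<s\leq t}e^{qs}|y(s)|$, split the weighted supremum at $s=0$ so the left piece is $\|\zeta\|_q$, use $(\max\{a,b\})^p=\max\{a^p,b^p\}\leq a^p+b^p$ to avoid a spurious $2^{p-1}$, and then invoke subadditivity and positive homogeneity of the sublinear expectation $\hat{\mathbb{E}}$ before relaxing $e^{-pqt}$ to $e^{-\lambda t}$ via $\lambda<pq$ and $t\geq0$. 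One minor observation: your argument never uses the hypothesis $p\geq 2$ (any $p>0$ with $q>0$ suffices), so you have in fact established a slightly more general statement than the one quoted.
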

For the following lemma see \cite{m,wc}.
\begin{lem}\label{fl}  Let
$\kappa(.):\R^+\rightarrow\R^+$ be a concave non-decreasing
continuous function satisfying $\kappa(0)=0$ and $\kappa(y)>0$ for
$y>0$. Assume that $\mu(t)\geq0$ for all $0\leq t\leq T <\infty,$
satisfies
\begin{equation*} \mu(t)\leq c+\int_{0}^t \varphi(s)\kappa(\mu(s))ds,\end{equation*} where $c$ is a positive real number and
$\varphi:[0,T]\rightarrow \R^{+}$. Then the following properties
hold.
\begin{itemize}
\item[${\bf(i)}$] If $c=0,$ then $\mu(t)=0,$ $t\in[0,T].$
\item[${\bf(ii)}$] If  $c>0,$ we define  $\omega(t)=\int_{0}^t
\frac{1}{\kappa(v)}dv,$ for $t\in[0,T]$, then
\begin{equation*}\mu(t)\leq \omega^{-1}(\omega(c)+\int_{0}^t \varphi(s)dv),\end{equation*}
where $\omega^{-1}$ is the inverse function of $\omega.$
\end{itemize}
\end{lem}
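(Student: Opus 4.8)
The plan is to recognize this as the Bihari--LaSalle integral inequality and to prove part (ii) first by turning the integral inequality into a differential inequality for a suitable majorant, then to obtain part (i) as a limiting case of (ii).

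First I would introduce the majorant $v(t)=c+\int_0^t\varphi(s)\kappa(\mu(s))\,ds$, so that $\mu(t)\le v(t)$ for every $t\in[0,T]$, with $v(0)=c>0$; here $v$ is non-decreasing (because $\varphi\ge0$ and $\kappa\ge0$) and absolutely continuous. Since $\kappa$ is non-decreasing and $\mu(t)\le v(t)$, I get $v'(t)=\varphi(t)\kappa(\mu(t))\le\varphi(t)\kappa(v(t))$ for a.e.\ $t$. Because $v(t)\ge c$ and $\kappa$ is non-decreasing, $\kappa(v(t))\ge\kappa(c)>0$, so division is legitimate and $v'(t)/\kappa(v(t))\le\varphi(t)$ a.e. Observing that $\omega'(x)=1/\kappa(x)$, the chain rule (valid here since $\omega$ is $C^1$ with bounded derivative on $[c,\infty)$ and $v$ is absolutely continuous) gives $\tfrac{d}{dt}\omega(v(t))\le\varphi(t)$ a.e. Integrating over $[0,t]$ yields $\omega(v(t))-\omega(c)\le\int_0^t\varphi(s)\,ds$; applying the increasing inverse $\omega^{-1}$ and using $\mu\le v$ produces the claimed bound $\mu(t)\le\omega^{-1}\big(\omega(c)+\int_0^t\varphi(s)\,ds\big)$.

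For part (i), I would not argue directly but instead apply part (ii) with $c$ replaced by an arbitrary $\ep>0$, which is legitimate because the hypothesis with $c=0$ trivially implies the same inequality with any $\ep>0$ added on the right. This gives $\mu(t)\le\omega^{-1}\big(\omega(\ep)+\int_0^t\varphi(s)\,ds\big)$ for every $\ep>0$, and I would then let $\ep\downarrow0$. The conclusion $\mu\equiv0$ follows provided $\omega(\ep)\to-\infty$, i.e.\ provided $\int_{0^+}\frac{ds}{\kappa(s)}=\infty$; under that Osgood-type divergence the argument of $\omega^{-1}$ tends to $-\infty$, and since $\omega$ decreases to $-\infty$ at the origin this forces $\mu(t)=0$ for all $t$.

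The main obstacle is entirely near the origin. The quantity $\omega(c)=\int_0^c ds/\kappa(s)$ can itself be infinite (for instance when $\kappa(s)=s$), so one must read the bound through the well-defined finite difference $\omega(v(t))-\omega(c)=\int_c^{v(t)}ds/\kappa(s)$, whose integrand is continuous and strictly positive on $[c,v(t)]$; this is what keeps $\omega$ invertible on the relevant range and makes the estimate meaningful irrespective of the behaviour of $\omega$ at $0$. The same behaviour at $0$ is precisely what decides part (i): the conclusion $\mu\equiv0$ genuinely requires the divergence $\int_{0^+}ds/\kappa(s)=\infty$ (otherwise $\kappa(s)=\sqrt{s}$ already furnishes nonzero solutions of the inequality with $c=0$), so I would make this condition explicit. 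The remaining technical points---that $v$ is differentiable only almost everywhere and that the chain rule applies to $\omega\circ v$---are handled cleanly by absolute continuity and present no real difficulty.
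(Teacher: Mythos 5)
Your proposal is correct and coincides with the paper's (implicit) proof: the paper does not prove Lemma \ref{fl} but defers to its citations \cite{m,wc}, and the classical Bihari--LaSalle argument given there is exactly yours --- pass to the majorant $v(t)=c+\int_0^t\varphi(s)\kappa(\mu(s))\,ds$, derive $v'(t)\le\varphi(t)\kappa(v(t))$ a.e., integrate $\tfrac{d}{dt}\,\omega(v(t))\le\varphi(t)$ while reading the estimate through the finite difference $\omega(v(t))-\omega(c)=\int_c^{v(t)}\frac{ds}{\kappa(s)}$, and obtain (i) by letting $c=\ep\downarrow0$. Your further observation is also accurate and repairs a genuine imprecision in the statement as printed: part (i) requires the Osgood condition $\int_{0^+}\frac{ds}{\kappa(s)}=\infty$, which the lemma omits (it is imposed only later, in assumption $A_1$, where the lemma is actually applied), and without it (i) fails, e.g.\ $\kappa(s)=\sqrt{s}$, $\varphi\equiv1$, $\mu(t)=t^2/4$.
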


\section{The G-SFDEs with monotone nonlinearity}
Consider equation \eqref{1} with the corresponding initial data
\eqref{i}. Let the coefficients $f$, $g$ and $h$ of \eqref{1}
satisfy the following conditions.
\begin{itemize}
\item[${\bf(H_1)}$] For any $y\in C_q((-\infty,0];\R^n)$,
 there exists a constant $K$ such that,
\begin{equation}\label{c11}
2\langle y(0),f(t,y)\rangle \vee 2\langle y(0),g(t,y)\rangle \vee
|h(t,y)|^2\leq K(1+\|y\|_q^2),\,\,\,\,\,\,t\in[0,T].
\end{equation}
\item[${\bf(H_2)}$] For any $y,z \in C_q((-\infty,0];\R^n)$, there exists a constant  $\hat{K}$ such
that,
\begin{equation}\label{A1}
\begin{split}
& 2 \langle z (0) - y (0), f ( t,z) - f ( t,y ) \rangle\vee 2
\langle z (0) - y (0), g (t, z) - g ( t,y ) \rangle \\&\vee | h(t,z)
- h(t,y)|^2 \leq \hat{K}\| z  - y\|_{q}^{2},\,\,\,\,\,\,t\in[0,T].
\end{split}
\end{equation}
\end{itemize}
First of all, let us  see the following useful lemma.
\begin{lem}\label{Lf1} Let $y(t)$ be any solution of problem \eqref{1} with
initial data \eqref{i} and $\hat{\mathbb{E}}\|\zeta\|_q^2<\infty$.
 Assume that assumption $H_1$ holds, then
\begin{equation*}
\hat{\mathbb{E}}\Big[\sup_{-\infty< v\leq T}|y(v)|^2\Big]\leq
\hat{\mathbb{E}}\|\zeta\|_q^2+C_1e^{c_3T},\end{equation*} where
$C_1=c_3T+ (2+c_3\lambda^{-1})\hat{\mathbb{E}}\|\zeta\|_q^2$,
$c_3=2K(1+2c_1+2c^2_2)$, $c_1$ and $c_2$ are positive constants.
\end{lem}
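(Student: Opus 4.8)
The plan is to run a Gronwall argument on $\Phi(t):=\hat{\mathbb{E}}\big[\sup_{0\le s\le t}|y(s)|^2\big]$, feeding it through the G-It\^o formula and the fading-memory estimate of Lemma \ref{Lf}. First I would take $y$ in the integral form of item (3) of the definition and apply the G-It\^o formula to $|y(t)|^2$. Since the martingale part carries $h\,dB$ while the drift is split between $f\,dv$ and $g\,d\langle B,B\rangle$, the It\^o correction contributes a $|h|^2\,d\langle B,B\rangle$ term, so the expansion reads
\begin{align*}
|y(t)|^2 &= |\zeta(0)|^2 + \int_0^t 2\langle y(v),f(v,y_v)\rangle\,dv \\
&\quad + \int_0^t\big(2\langle y(v),g(v,y_v)\rangle + |h(v,y_v)|^2\big)\,d\langle B,B\rangle(v) + \int_0^t 2\langle y(v),h(v,y_v)\rangle\,dB(v).
\end{align*}
Taking $\sup_{0\le s\le t}$ and then $\hat{\mathbb{E}}$, I would control the two non-martingale integrals directly by $H_1$: each of $2\langle y(v),f\rangle$, $2\langle y(v),g\rangle$ and $|h|^2$ is at most $K(1+\|y_v\|_q^2)$, and these are upper bounds that are nonnegative, so the running suprema are dominated by the full $[0,t]$ integrals. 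Because both $2\langle y,g\rangle$ and $|h|^2$ ride on $d\langle B,B\rangle$, the variation integrand is at most $2K(1+\|y_v\|_q^2)$, and the bound $d\langle B,B\rangle(v)\le c_1\,dv$ (with $c_1=\bar{\sigma}^2$) turns it into a term $2Kc_1\int_0^t(1+\hat{\mathbb{E}}\|y_v\|_q^2)\,dv$; this is the source of the $2c_1$ sitting inside $c_3$.

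The main obstacle is the stochastic integral $M(s)=\int_0^s 2\langle y,h\rangle\,dB$, which appears linearly yet must be controlled in $\sup$-norm. Here I would invoke the BDG-type estimate of Lemma \ref{l3} to bound $\hat{\mathbb{E}}\sup_{0\le s\le t}|M(s)|$ by a constant times $\hat{\mathbb{E}}\big(\int_0^t 4|y(v)|^2|h(v,y_v)|^2\,dv\big)^{1/2}$, factor out $\sup_v|y(v)|$, and then apply Young's inequality to split $2c_2\,\sup_v|y(v)|\cdot\big(\int_0^t|h|^2\,dv\big)^{1/2}$ into $\tfrac12\sup_v|y(v)|^2 + 2c_2^2\int_0^t|h|^2\,dv$. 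The first piece is $\tfrac12\Phi(t)$, which I absorb into the left-hand side; this is precisely what forces the overall factor $2$ in $c_3=2K(1+2c_1+2c_2^2)$ when I multiply the inequality through. The second piece, bounded once more by $H_1$, contributes the $2c_2^2$ term.

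After absorbing, I obtain $\Phi(t)\le 2\hat{\mathbb{E}}|\zeta(0)|^2 + c_3\int_0^t\big(1+\hat{\mathbb{E}}\|y_v\|_q^2\big)\,dv$. The decisive reduction is then Lemma \ref{Lf} with $p=2$ and a fixed $\lambda\in(0,2q)$: substituting $\hat{\mathbb{E}}\|y_v\|_q^2\le e^{-\lambda v}\hat{\mathbb{E}}\|\zeta\|_q^2 + \Phi(v)$ and integrating the exponential via $\int_0^t e^{-\lambda v}\,dv\le \lambda^{-1}$ produces exactly the $\lambda^{-1}$ appearing in $C_1$. Using $\hat{\mathbb{E}}|\zeta(0)|^2\le \hat{\mathbb{E}}\|\zeta\|_q^2$, the inequality collapses to $\Phi(t)\le C_1 + c_3\int_0^t\Phi(v)\,dv$ with $C_1=c_3T+(2+c_3\lambda^{-1})\hat{\mathbb{E}}\|\zeta\|_q^2$. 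A final application of Gronwall's inequality gives $\Phi(T)\le C_1 e^{c_3T}$, and adding the contribution of the initial segment, $\hat{\mathbb{E}}\sup_{-\infty<v\le 0}|y(v)|^2=\hat{\mathbb{E}}\sup_{-\infty<v\le 0}|\zeta(v)|^2$, accounted for through the initial data by $\hat{\mathbb{E}}\|\zeta\|_q^2$, yields the stated bound. I expect the only delicate points to be the constant bookkeeping through the BDG/Young step and checking that the $\sup$-insertions are legitimate for the G-It\^o integral (handled by passing to the continuous modifications guaranteed in Lemmas \ref{l3}--\ref{l4}).
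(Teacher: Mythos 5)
Your proposal is correct and follows essentially the same route as the paper's proof: G-It\^o formula applied to $|y(t)|^2$, Lemma \ref{l4} to convert the $d\langle B,B\rangle$ terms into $c_1\,dv$ terms, Lemma \ref{l3} (BDG-type) plus Young's inequality to split the martingale supremum into $\tfrac12\hat{\mathbb{E}}\sup|y|^2$ (absorbed) and $2c_2^2\hat{\mathbb{E}}\int_0^t|h|^2dv$, then $H_1$ with Lemma \ref{Lf} and Gronwall, reproducing exactly the paper's constants $c_3=2K(1+2c_1+2c_2^2)$ and $C_1=c_3T+(2+c_3\lambda^{-1})\hat{\mathbb{E}}\|\zeta\|_q^2$. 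Even your final addition of the initial-segment contribution via $\hat{\mathbb{E}}\|\zeta\|_q^2$ mirrors the paper's concluding inequality $\hat{\mathbb{E}}\big[\sup_{-\infty<v\leq t}|y(v)|^2\big]\leq \hat{\mathbb{E}}\|\zeta\|_q^2+\hat{\mathbb{E}}\big[\sup_{0\leq v\leq t}|y(v)|^2\big]$, so there is nothing substantive to distinguish the two arguments.
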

\begin{proof} Applying
the G-It$\hat{o}$ formula to $|y(t)|^2$, taking the G-expectation on
both sides, using properties of G-It\^o integral, lemma \ref{l3} and
lemma \ref{l4}, there exist $c_1>0$ and $c_2>0$ such that for any
$t\in[0, T]$,
\begin{equation*}\begin{split}
\hat{\mathbb{E}}\Big[\sup_{0\leq v\leq t}|y(v)|^2\Big]&\leq
\hat{\mathbb{E}}\|\zeta\|_q^2+ \hat{\mathbb{E}}\Big[\sup_{0\leq
v\leq t}\int_0^t2\langle y(v),f(v,y_v)\rangle
dv\Big]+\hat{\mathbb{E}}\Big[\sup_{0\leq v\leq t}\int_0^t2\langle
y(v), h(v,y_v)\rangle dB(v)\Big]\\&+\hat{\mathbb{E}}\Big[\sup_{0\leq
v\leq t}\int_0^t[2\langle y(v), g(v,y_v)\rangle+|h(v,y_v)|^2]d
\langle B, B \rangle (v)\Big]\\& \leq \hat{\mathbb{E}}\|\zeta\|_q^2+
\hat{\mathbb{E}}\int_0^t2\langle y(v), f(v,y_v)\rangle
dv+2c_2\hat{\mathbb{E}}\Big[\int_0^t|\langle y(v),
h(v,y_v)\rangle|^2dv\Big]^{\frac{1}{2}}\\&+
c_1\hat{\mathbb{E}}\int_0^t[2\langle y(v),
g(v,y_v)\rangle+|h(v,y_v)|^2]dv\\& \leq
\hat{\mathbb{E}}\|\zeta\|_q^2+ \hat{\mathbb{E}}\int_0^t2\langle
y(v),f(v,y_v)\rangle dv+
2c^2_2\hat{\mathbb{E}}\int_0^t|h(v,y_v)|^2dv
+\frac{1}{2}\hat{\mathbb{E}}\Big[\sup_{0\leq v\leq t}
|y(v)|^2\Big]\\&+ c_1\hat{\mathbb{E}}\int_0^t[2\langle
y(v),g(v,y_v)\rangle+|h(v,y_v)|^2]dv.
\end{split}\end{equation*}
By using assumption $H_1$ and lemma \ref{Lf}, it follows
\begin{equation*}\begin{split}
\hat{\mathbb{E}}\Big[\sup_{0\leq v\leq t}|y(v)|^2\Big]&
 \leq 2\hat{\mathbb{E}}\|\zeta\|_q^2+
2(1+2c_1+2c^2_2)K\hat{\mathbb{E}}\int_0^t(1+\|y\|_q^2)dv\\& \leq
2\hat{\mathbb{E}}\|\zeta\|_q^2+2(1+2c_1+2c^2_2)K(T+\lambda^{-1}\hat{\mathbb{E}}\|\zeta\|_q^2)\\&
+2(1+2c_1+2c^2_2)K\hat{\mathbb{E}}\int_0^t\Big[\sup_{0\leq v\leq
t}|y(v)|^2\Big]dv.
\end{split}\end{equation*}
In virtue of the Grownwall inequality, we have
\begin{equation}\label{3}
\hat{\mathbb{E}}\Big[\sup_{0\leq v\leq t}|y(v)|^2\Big]\leq
C_1e^{c_3t},\end{equation} where $C_1=c_3T+
(2+c_3\lambda^{-1})\hat{\mathbb{E}}\|\zeta\|_q^2$ and
$c_3=2K(1+2c_1+2c^2_2)$. Noticing that
\begin{equation*}
\hat{\mathbb{E}}\Big[\sup_{-\infty< v\leq t}|y(v)|^2\Big]\leq
\hat{\mathbb{E}}\|\zeta\|_q^2+\hat{\mathbb{E}}\Big[\sup_{0\leq v\leq
t}|y(v)|^2\Big],
\end{equation*}
we get
\begin{equation*}
\hat{\mathbb{E}}\Big[\sup_{-\infty< v\leq t}|y(v)|^2\Big]\leq
\hat{\mathbb{E}}\|\zeta\|_q^2+C_1e^{c_3t}.
\end{equation*}
By letting $t=T$, the proof of the required assertion completes.
\end{proof}
\begin{rem} Lemma \ref{Lf1} states that the solution $y(t)$ is
bounded, in particular, $y(t)\in M^2_G((-\infty,T];\R^n)$.
\end{rem}
Next under the assumptions $H_1$ and $H_2$, we prove the
existence-uniqueness results for the G-SFDE \eqref{1} with the given
initial data \eqref{i} in the phase space with fading memory
$C_q((-\infty,T];\R^n)$. First, we derive the uniqueness of
solutions.
\begin{defn} A solution $y(t)$ of problem \eqref{1} with the initial data \eqref{i} is said to be unique if it is
indistinguishable from any other solution $z(t)$, that is,
\begin{equation*}\hat{\mathbb{E}}\Big[\sup_{-\infty< v\leq t}|z(v)-y(v)|^2\Big]=0,\end{equation*}
quasi-surely.
\end{defn}
\begin{thm} Let assumption $H_2$ holds. Then \eqref{1} has a unique
solution, if exists.
\end{thm}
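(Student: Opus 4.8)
The plan is to prove uniqueness by a standard contradiction-free estimate: assume $y(t)$ and $z(t)$ are two solutions of \eqref{1} with the same initial data \eqref{i}, and show that their difference vanishes quasi-surely in the supremum $L^2_G$-norm. Setting $\tilde{y}(t)=z(t)-y(t)$, I note that $\tilde{y}_0=\zeta-\zeta=0$, so $\|\tilde{y}_0\|_q=0$, and that by the definition of a solution the difference satisfies the integral equation
\begin{equation*}
\tilde{y}(t)=\int_0^t\big[f(v,z_v)-f(v,y_v)\big]dv+\int_0^t\big[g(v,z_v)-g(v,y_v)\big]d\langle B,B\rangle(v)+\int_0^t\big[h(v,z_v)-h(v,y_v)\big]dB(v).
\end{equation*}
The idea is to apply the G-It\^o formula to $|\tilde{y}(t)|^2$, take G-expectations, and use assumption $H_2$ to control each of the resulting terms by $\hat{K}\|z_v-y_v\|_q^2$.

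First I would reproduce the computation from the proof of Lemma \ref{Lf1}, but applied to $\tilde{y}$ rather than $y$. Taking $\hat{\mathbb{E}}[\sup_{0\le v\le t}|\tilde{y}(v)|^2]$ and using the properties of the G-It\^o integral (Proposition, parts (1)--(2)) together with Lemmas \ref{l3} and \ref{l4}, the martingale-type $dB(v)$ term is bounded using the Burkholder--Davis--Gundy-style estimate of Lemma \ref{l3}, producing a cross term $2c_2\hat{\mathbb{E}}[\int_0^t|\langle\tilde{y}(v),h(v,z_v)-h(v,y_v)\rangle|^2dv]^{1/2}$ which, after the elementary inequality $2ab\le \tfrac12 a^2+2b^2$, yields a $\tfrac12\hat{\mathbb{E}}[\sup_{0\le v\le t}|\tilde{y}(v)|^2]$ term to be absorbed on the left, plus a $2c_2^2\hat{\mathbb{E}}\int_0^t|h(v,z_v)-h(v,y_v)|^2dv$ term. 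The $d\langle B,B\rangle$ term is handled by Lemma \ref{l4} to pass to $dv$ with a constant $c_1$. Collecting everything and invoking $H_2$ to replace $2\langle\tilde{y}(v),f(v,z_v)-f(v,y_v)\rangle$, $2\langle\tilde{y}(v),g(v,z_v)-g(v,y_v)\rangle$, and $|h(v,z_v)-h(v,y_v)|^2$ each by $\hat{K}\|z_v-y_v\|_q^2$ gives a bound of the form
\begin{equation*}
\hat{\mathbb{E}}\Big[\sup_{0\le v\le t}|\tilde{y}(v)|^2\Big]\le \hat{K}(1+2c_1+2c_2^2)\,\hat{\mathbb{E}}\int_0^t\|z_v-y_v\|_q^2\,dv.
\end{equation*}

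The key connective step is to pass from $\|z_v-y_v\|_q^2$ back to the supremum of $|\tilde{y}|^2$. For this I would apply Lemma \ref{Lf} with $p=2$ to the difference process: since $\tilde{y}_0=0$, the term $e^{-\lambda t}\hat{\mathbb{E}}\|\tilde{y}_0\|_q^2$ vanishes, leaving $\hat{\mathbb{E}}\|\tilde{y}_v\|_q^2\le \hat{\mathbb{E}}[\sup_{0<u\le v}|\tilde{y}(u)|^2]$. Substituting this back produces an integral inequality
\begin{equation*}
\hat{\mathbb{E}}\Big[\sup_{0\le v\le t}|\tilde{y}(v)|^2\Big]\le \bar{c}\int_0^t\hat{\mathbb{E}}\Big[\sup_{0\le u\le v}|\tilde{y}(u)|^2\Big]dv,
\end{equation*}
with no additive constant. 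By the Gronwall inequality (equivalently Lemma \ref{fl}(i) with $c=0$ and $\kappa$ the identity), this forces $\hat{\mathbb{E}}[\sup_{0\le v\le t}|\tilde{y}(v)|^2]=0$ for every $t\in[0,T]$, and adjoining the trivial initial segment gives $\hat{\mathbb{E}}[\sup_{-\infty<v\le t}|\tilde{y}(v)|^2]=0$, i.e. $z$ and $y$ are indistinguishable, which is the asserted uniqueness.

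The main obstacle I anticipate is the clean absorption of the fading-memory norm: one must ensure that the vanishing of the initial data really does remove the additive term when Lemma \ref{Lf} is applied, so that Gronwall runs with $c=0$ and yields exactly zero rather than merely an exponential bound. The stochastic-integral estimates are routine given Lemmas \ref{l3} and \ref{l4}, so the delicate bookkeeping is entirely in matching the constants and confirming that every appearance of $\|z_v-y_v\|_q^2$ is legitimately dominated by the running supremum of the difference before Gronwall is invoked.
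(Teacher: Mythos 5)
Your proposal is correct and follows essentially the same route as the paper's own proof: apply the G-It\^o formula to $|z(t)-y(t)|^2$, bound the stochastic integrals via Lemmas \ref{l3} and \ref{l4} with the $\tfrac12\hat{\mathbb{E}}[\sup|\cdot|^2]$ absorption, invoke $H_2$, pass from $\|z_v-y_v\|_q^2$ to the running supremum via Lemma \ref{Lf} using the vanishing initial difference, and conclude by Gronwall with zero additive constant. The only discrepancy is a harmless bookkeeping slip in your constant (you wrote $\hat{K}(1+2c_1+2c_2^2)$ where absorbing the $\tfrac12$ term on the left doubles it to $2(1+2c_1+2c_2^2)\hat{K}$, as in the paper), which is immaterial since Gronwall with $c=0$ forces the quantity to vanish regardless.
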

\begin{proof} Let \eqref{1} has two solutions say $y(t)$ and $z(t)$ with the same initial
data. By virtue of lemma \ref{Lf1}, $y(t),z(t)\in
M^2_G((-\infty,T];\R^n)$. Define $\Lambda(t)=z(t)-y(t)$,
$\hat{f}(t)=f(t,z_t)-f(t,y_t)$, $\hat{g}(t)=g(t,z_t)-g(t,y_t)$ and
$\hat{h}(t)=h(t,z_t)-h(t,y_t)$. Applying the G-It$\hat{o}$ formula
to $|\Lambda(t)|^2$, taking the G-expectation on both sides, using
properties of G-It\^o integral, lemma \ref{l3} and lemma \ref{l4},
there exist $c_1>0$ and $c_2>0$ such that for any $t\in[0, T]$,
\begin{equation*}\begin{split}
\hat{\mathbb{E}}\Big[\sup_{0\leq v\leq t}|\Lambda(v)|^2\Big]&\leq
\hat{\mathbb{E}}\Big[\sup_{0\leq v\leq
t}\int_0^t2\langle\Lambda(v),\hat{f}(v)\rangle dv\Big]+
\hat{\mathbb{E}}\Big[\sup_{0\leq v\leq t}\int_0^t2\langle\Lambda(v),
\hat{h}(v)\rangle dB(v)\Big]\\& +\hat{\mathbb{E}}\Big[\sup_{0\leq
v\leq
t}\int_0^t[2\langle\Lambda(v),\hat{g}(v)\rangle+|\hat{h}(v)|^2d
\langle B, B \rangle (v)\Big]\\& \leq
\hat{\mathbb{E}}\int_0^t2\langle\Lambda(v),\hat{f}(v)\rangle
dv+2c_2\hat{\mathbb{E}}\Big[\int_0^t|\langle\Lambda(v)
,\hat{h}(v)\rangle|^2dv\Big]^{\frac{1}{2}}\\&+
c_1\hat{\mathbb{E}}\int_0^t[2\langle\Lambda(v),\hat{g}(v)\rangle+|\hat{h}(v)|^2]dv\\&
\leq \hat{\mathbb{E}}\int_0^t2\langle\Lambda(v),\hat{f}(v)\rangle dv
+ 2c^2_2\hat{\mathbb{E}}\int_0^t|\hat{h}(v)|^2dv
+\frac{1}{2}\hat{\mathbb{E}}\Big[\sup_{0\leq v\leq t}
|\Lambda(v)|^2\Big]\\& +
c_1\hat{\mathbb{E}}\int_0^t[2\langle\Lambda(v),\hat{g}(v)\rangle+|\hat{h}(v)|^2]dv.
\end{split}\end{equation*}
In view of assumption $H_2$, it follows
\begin{equation*}\begin{split}
\hat{\mathbb{E}}\Big[\sup_{0\leq v\leq t}|z(v)-y(v)|^2\Big]& \leq
2(1+2c_1+2c^2_2)\hat{K}\hat{\mathbb{E}}\int_0^t\| z  - y\|_{q}^{2}dv
\end{split}\end{equation*}
Noticing that initial data of $z(t)$ and $y(t)$ is same, lemma
\ref{Lf} yields
\begin{equation*}\hat{\mathbb{E}}\| z - y\|_{q}^{2}\leq
\hat{\mathbb{E}}\Big[\sup_{0\leq v\leq t}|z(v)-y(v)|^2\Big],
\end{equation*}
which on substituting in the above last inequality gives
\begin{equation*}\begin{split}
\hat{\mathbb{E}}\Big[\sup_{0\leq v\leq t}|z(v)-y(v)|^2\Big]& \leq
2(1+2c_1+2c^2_2)\hat{K}\int_0^t\hat{\mathbb{E}}\Big[\sup_{0\leq
v\leq t}|z(v)-y(v)|^2\Big]dv.
\end{split}\end{equation*}
By using the Grownwall inequality, we derive
\begin{equation*}
\hat{\mathbb{E}}\Big[\sup_{0\leq v\leq t}|z(v)-y(v)|^2\Big]=0,
\end{equation*}
because the initial data of $y(t)$ and $z(t)$ is same, it follows
\begin{equation*}
\hat{\mathbb{E}}\Big[\sup_{-\infty <v\leq t}|z(v)-y(v)|^2\Big]=0.
\end{equation*}
This shows that for $t\in(-\infty,T]$, $y(t)=z(t)$ quasi-surely. The
proof of uniqueness is complete.
\end{proof}
To prove the existence of solutions we use the Picard iteration
scheme. For $t\in[0,T]$, define $y^0(t)=\zeta(0)$ and $y^0_0=\zeta$.
For each $k=1,2,...$, set $y^k_0=\zeta$ and for $t\in[0,T]$, define
the Picard iterations,
\begin{equation}\label{f1} y^k(t) =\zeta(0)+\int_0^t f ( v,y^{k-1}_{v}) dv + \int_0^tg ( v,y^{k-1}_{v}) d \langle B, B
\rangle (v) +\int_0^t h (v, y^{k-1}_{v}) dB (v).
\end{equation}
It is obvious that $y^0(t)\in M^2_G((-\infty,T];\R^n)$ and by
induction for each $k=1,2,...$, $y^k(t)\in M^2_G((-\infty,T];\R^n)$,
which is derived in the following lemma \ref{Lf2}.
\begin{lem}\label{Lf2} Let assumption $H_1$ holds and
$\hat{\mathbb{E}}\|\zeta\|_q^2<\infty$. Then
\begin{equation}\label{f4}
\hat{\mathbb{E}}\Big[\sup_{0\leq v\leq t}|y^k(v)|^2\Big]\leq
C_2e^{c_3T},\end{equation} where
$C_2=c_3T+(2+c_3(T+\lambda^{-1}))\hat{\mathbb{E}}\|\zeta\|_q^2$,
$c_3=2K(1+2c_1+2c^2_2)$ and $c_1$, $c_2$ are positive constants.
\end{lem}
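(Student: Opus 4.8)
The plan is to prove the uniform bound \eqref{f4} on the Picard iterates by following essentially the same energy estimate used in Lemma \ref{Lf1}, but carried out on the iteration formula \eqref{f1} rather than on an exact solution. The key structural point is that in \eqref{f1} the coefficients are evaluated at the previous iterate $y^{k-1}_v$, whereas the left-hand side involves $y^k(v)$; so the growth condition $H_1$ will produce $\|y^{k-1}\|_q^2$ on the right, not $\|y^k\|_q^2$. This mismatch is exactly what makes a direct Gronwall argument inapplicable, and overcoming it is the main obstacle.

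First I would apply the G-It\^o formula to $|y^k(t)|^2$ and take the G-expectation, exactly as in Lemma \ref{Lf1}. Using property (1) of the G-It\^o integral to kill the $dB$ martingale term, the Burkholder-type estimate from Lemma \ref{l3} to control $\hat{\mathbb{E}}[\sup_{0\leq v\leq t}\int_0^t 2\langle y^k(v),\hat h\rangle dB(v)]$ (absorbing a $\tfrac12\hat{\mathbb{E}}[\sup|y^k|^2]$ term into the left side), and Lemma \ref{l4} to convert the $d\langle B,B\rangle$ integral into a $dv$ integral with constant $c_1$, I obtain an inequality of the shape
\begin{equation*}
\hat{\mathbb{E}}\Big[\sup_{0\leq v\leq t}|y^k(v)|^2\Big]\leq 2\hat{\mathbb{E}}\|\zeta\|_q^2+2(1+2c_1+2c_2^2)K\,\hat{\mathbb{E}}\int_0^t\big(1+\|y^{k-1}_v\|_q^2\big)\,dv,
\end{equation*}
where $H_1$ has been used to bound the three coefficient terms by $K(1+\|y^{k-1}_v\|_q^2)$. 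Next I would invoke Lemma \ref{Lf} with $p=2$ to dominate $\hat{\mathbb{E}}\|y^{k-1}_v\|_q^2$ by $e^{-\lambda v}\hat{\mathbb{E}}\|\zeta\|_q^2+\hat{\mathbb{E}}[\sup_{0\leq u\leq v}|y^{k-1}(u)|^2]$, which brings the preceding iterate's supremum into the estimate.

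The remaining step, which is the heart of the argument, is to close the recursion. Writing $c_3=2K(1+2c_1+2c_2^2)$ and $M_k(t)=\hat{\mathbb{E}}[\sup_{0\leq v\leq t}|y^k(v)|^2]$, the above reduces to a recursive inequality of the form $M_k(t)\leq C_2+c_3\int_0^t M_{k-1}(v)\,dv$, with $C_2=c_3T+(2+c_3(T+\lambda^{-1}))\hat{\mathbb{E}}\|\zeta\|_q^2$ absorbing the constant and the $e^{-\lambda v}$ contributions. I would then argue by induction on $k$: assuming $M_{k-1}(t)\leq C_2 e^{c_3 t}$ for all $t\in[0,T]$ (the base case $k=0$ being immediate since $y^0(t)=\zeta(0)$ is constant and bounded by $\hat{\mathbb{E}}\|\zeta\|_q^2\leq C_2$), substitution gives
\begin{equation*}
M_k(t)\leq C_2+c_3\int_0^t C_2 e^{c_3 v}\,dv = C_2 e^{c_3 t},
\end{equation*}
so the bound propagates uniformly in $k$. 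Evaluating at $t=T$ yields \eqref{f4}. The delicate point to get right is the bookkeeping of constants so that $C_2$ and $c_3$ come out exactly as stated, and ensuring the induction hypothesis is the clean exponential $C_2 e^{c_3 v}$ so that the integral closes without generating iterated $T$-powers; the extra $T$ appearing inside $C_2$ (compared with Lemma \ref{Lf1}) is precisely the trace of having applied Lemma \ref{Lf} to the previous iterate rather than to the solution itself.
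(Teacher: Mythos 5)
Your proposal is correct, and it reaches the stated bound, but you close the recursion differently from the paper. After the G-It\^o/Burkholder step and Lemma \ref{Lf}, both you and the paper arrive at the same recursive inequality $M_k(t)\leq C+c_3\int_0^t M_{k-1}(v)\,dv$ with $M_k(t)=\hat{\mathbb{E}}\big[\sup_{0\leq v\leq t}|y^k(v)|^2\big]$. You then induct on $k$ with the exponential ansatz $M_{k-1}(t)\leq C_2e^{c_3t}$, using $y^0(t)=\zeta(0)$ for the base case; the paper instead uses the Mao-style maximum device: it bounds $\max_{1\leq k\leq n}M_{k-1}(t)\leq \hat{\mathbb{E}}\|\zeta\|_q^2+\max_{1\leq k\leq n}M_k(t)$, obtains a closed integral inequality for $\max_{1\leq k\leq n}M_k(t)$, applies Gronwall once, and invokes the arbitrariness of $n$. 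Both devices are standard and valid; your induction is more elementary (one never needs Gronwall, only the identity $C_2+C_2(e^{c_3t}-1)=C_2e^{c_3t}$) and it handles the a priori finiteness of $M_k$ automatically, since finiteness propagates with the induction hypothesis. One small correction to your closing remark: the extra $c_3T\hat{\mathbb{E}}\|\zeta\|_q^2$ inside $C_2$ (absent from $C_1$ in Lemma \ref{Lf1}) is not the trace of applying Lemma \ref{Lf} to the previous iterate --- that application produces the same $c_3\lambda^{-1}\hat{\mathbb{E}}\|\zeta\|_q^2$ term as before; it is an artifact of the paper's maximum trick, which injects $\hat{\mathbb{E}}\|\zeta\|_q^2$ under the time integral. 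Indeed, your induction actually closes with the smaller constant $C_1=c_3T+(2+c_3\lambda^{-1})\hat{\mathbb{E}}\|\zeta\|_q^2$ (since $\hat{\mathbb{E}}|\zeta(0)|^2\leq C_1$ covers the base case), so your route yields a marginally sharper bound, of which the stated $C_2e^{c_3T}$ is a consequence. Finally, note that you inherit the same looseness as the paper in invoking $H_1$: the hypothesis bounds $2\langle y(0),f(t,y)\rangle$ with the same argument in both slots, whereas the iteration produces the mixed term $\langle y^k(v),f(v,y^{k-1}_v)\rangle$; this gap is the paper's, not yours, but strictly speaking an additional hypothesis or a splitting argument would be needed to justify that step in either proof.
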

\begin{proof}Applying
the G-It$\hat{o}$ formula to $|y^k(t)|^2$, taking the G-expectation
on both sides, using properties of G-It\^o integral, lemma \ref{l3}
and lemma \ref{l4}, there exist $c_1>0$ and $c_2>0$ such that for
any $t\in[0, T]$
\begin{equation*}\begin{split}
\hat{\mathbb{E}}\Big[\sup_{0\leq v\leq t}|y^k(v)|^2\Big]&\leq
\hat{\mathbb{E}}\Big[\sup_{0\leq v\leq t}\int_0^t2\langle
y^k(v),f(v,y^{k-1}_v)\rangle
dv\Big]+\hat{\mathbb{E}}\Big[\sup_{0\leq v\leq t}\int_0^t2\langle
y^k(v), h(v,y^{k-1}_v)\rangle dB(v)\Big]\\&+
\hat{\mathbb{E}}\Big[\sup_{0\leq v\leq t}\int_0^t[2\langle
y^k(v),g(v,y^{k-1}_v)\rangle+|h(v,y^{k-1}_v)|^2d \langle B, B
\rangle (v)\Big]\\& \leq \hat{\mathbb{E}}\int_0^t2\langle
y^k(v),f(v,y^{k-1}_v)\rangle dv +
2c^2_2\hat{\mathbb{E}}\int_0^t|h(v,y^{k-1}_v)|^2dv
+\frac{1}{2}\hat{\mathbb{E}}\Big[\sup_{0\leq v\leq t}
|y^k(v)|^2\Big]\\&+ c_1\hat{\mathbb{E}}\int_0^t[2\langle
y^k(v),g(v,y^{k-1}_v)\rangle+|h(v,y^{k-1}_v)|^2]dv.
\end{split}\end{equation*}
By using assumption $H_1$ and lemma \ref{Lf}, it follows
\begin{equation*}\begin{split}
\hat{\mathbb{E}}\Big[\sup_{0\leq v\leq t}|y^k(v)|^2\Big]&
 \leq 2\hat{\mathbb{E}}\|\zeta\|_q^2+
2(1+2c_1+4c^2_2)K\hat{\mathbb{E}}\int_0^t(1+\|y^{k-1}\|_q^2)dv\\&
\leq
2\hat{\mathbb{E}}\|\zeta\|_q^2+2(1+2c_1+2c^2_2)K(T+\lambda^{-1}\hat{\mathbb{E}}\|\zeta\|_q^2)\\&
+2(1+2c_1+2c^2_2)K\hat{\mathbb{E}}\int_0^t\Big[\sup_{0\leq v\leq
t}|y^{k-1}(v)|^2\Big]dv.
\end{split}\end{equation*}
Noticing that
\begin{equation*}\begin{split}\max_{1\leq k\leq n}\hat{\mathbb{E}}\Big[\sup_{0\leq v\leq
t}|y^{k-1}(v)|^2\Big]&\leq
\max\Big\{\hat{\mathbb{E}}\|\zeta\|_q^2,\max_{1\leq k\leq
n}\hat{\mathbb{E}}\Big[\sup_{0\leq v\leq
t}|y^{k}(v)|^2\Big]\Big\}\\&\leq
\hat{\mathbb{E}}\|\zeta\|_q^2+\max_{1\leq k\leq
n}\hat{\mathbb{E}}\Big[\sup_{0\leq v\leq t}|y^{k}(v)|^2\Big],
\end{split}\end{equation*}
we derive
\begin{equation*}\begin{split}
\max_{1\leq k\leq n}\hat{\mathbb{E}}\Big[\sup_{0\leq v\leq
t}|y^k(v)|^2\Big]& \leq
2\hat{\mathbb{E}}\|\zeta\|_q^2+2K(1+2c_1+2c^2_2)(T+(T+\lambda^{-1})\hat{\mathbb{E}}\|\zeta\|_q^2)\\&
+2K(1+2c_1+2c^2_2)\int_0^t\max_{1\leq k\leq
n}\hat{\mathbb{E}}\Big[\sup_{0\leq v\leq t}|y^{k}(v)|^2\Big]dv,
\end{split}\end{equation*}
In virtue of the Grownwall inequality and noting that $n$ is
arbitrary, we have
\begin{equation*}
\hat{\mathbb{E}}\Big[\sup_{0\leq v\leq t}|y^k(v)|^2\Big]\leq
2\Big[\hat{\mathbb{E}}\|\zeta\|_q^2+K(1+2c_1+2c^2_2)(T+(T+\lambda^{-1})\hat{\mathbb{E}}\|\zeta\|_q^2)\Big]e^{2K(1+2c_1+2c^2_2)t}.\end{equation*}
Finally, by using the fact
\begin{equation*}
\hat{\mathbb{E}}\Big[\sup_{-\infty< v\leq t}|y^k(v)|^2\Big]\leq
\hat{\mathbb{E}}\|\zeta\|_q^2+\hat{\mathbb{E}}\Big[\sup_{0\leq v\leq
t}|y^k(v)|^2\Big],
\end{equation*}
 and letting $t=T$, the proof of assertion \eqref{f4} completes.
\end{proof}
\begin{thm}\label{thm1} Let assumptions $H_1$ and $H_2$ hold. Then problem
\eqref{1} with the given initial data \eqref{i} has a solution
$y(t)\in M^2_G((-\infty,T];\R^n)$.
\end{thm}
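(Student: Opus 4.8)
The plan is to run the Picard scheme \eqref{f1} to completion: first show that the iterates $\{y^k\}$ form a Cauchy sequence in the complete space $M^2_G((-\infty,T];\R^n)$ equipped with the norm $\Big(\hat{\mathbb{E}}\big[\sup_{-\infty<v\le T}|\cdot|^2\big]\Big)^{1/2}$, and then identify the limit as a solution of \eqref{1}. Lemma \ref{Lf2} already supplies the uniform a priori bound $\hat{\mathbb{E}}\big[\sup_{0\le v\le t}|y^k(v)|^2\big]\le C_2 e^{c_3T}$, so every iterate lies in the space and all increments are well defined.

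First I would estimate the consecutive increments $y^{k+1}(t)-y^k(t)$. Subtracting the defining identities \eqref{f1} for indices $k+1$ and $k$, applying the G-It\^o formula to $|y^{k+1}(t)-y^k(t)|^2$, taking the G-expectation, and invoking the properties of the G-It\^o integral together with Lemmas \ref{l3} and \ref{l4} exactly as in the uniqueness proof, I would aim at a linear recursion of the form
\begin{equation*}
\hat{\mathbb{E}}\Big[\sup_{0\le v\le t}|y^{k+1}(v)-y^k(v)|^2\Big]\le C\int_0^t \hat{\mathbb{E}}\Big[\sup_{0\le r\le s}|y^{k}(r)-y^{k-1}(r)|^2\Big]\,ds,
\end{equation*}
in which the monotonicity part of $H_2$ is meant to dominate the drift and the $d\langle B,B\rangle$ terms, the Lipschitz part of $H_2$ controls $|h(v,y^k_v)-h(v,y^{k-1}_v)|^2$, and the passage from $\|y^k-y^{k-1}\|_q^2$ to $\sup_{0\le r\le s}|y^k(r)-y^{k-1}(r)|^2$ is furnished by Lemma \ref{Lf} since the initial segments $y^k_0=y^{k-1}_0=\zeta$ agree. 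Iterating this recursion yields $\hat{\mathbb{E}}\big[\sup_{0\le v\le T}|y^{k+1}(v)-y^k(v)|^2\big]\le C_0\,(CT)^k/k!$, whose square roots are summable, so $\{y^k\}$ is Cauchy and converges to an $\mathcal F_t$-adapted continuous process $y$.

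The final step is to pass to the limit in \eqref{f1}. Using the isometry (property (2) of the G-It\^o integral) for the $dB$ term and Lemma \ref{l4} for the $d\langle B,B\rangle$ term, each integral converges to the corresponding expression with $y_v$ in place of $y^{k-1}_v$, so $y$ satisfies condition (3) of the definition of a solution; membership $y\in M^2_G((-\infty,T];\R^n)$ is inherited in the limit from Lemma \ref{Lf2}. Uniqueness is already in hand from the preceding theorem, so the constructed $y$ is the solution.

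The main obstacle is precisely the increment estimate above. Condition $H_2$ bounds only the inner products $2\langle z(0)-y(0),f(t,z)-f(t,y)\rangle$ and $2\langle z(0)-y(0),g(t,z)-g(t,y)\rangle$: it is a one-sided monotonicity bound in which the vector in the first slot is the same state whose images appear in the second. In the recursion, however, the terms that actually arise are $2\langle y^{k+1}(v)-y^k(v),\,f(v,y^k_v)-f(v,y^{k-1}_v)\rangle$ (and similarly for $g$), where the state $y^{k+1}-y^k$ does \emph{not} match the arguments $y^k,y^{k-1}$ of the coefficient difference, and since $H_2$ furnishes no bound on $|f(t,z)-f(t,y)|$ itself one cannot split off the cross term by Cauchy--Schwarz. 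This index mismatch is the delicate point at which monotone, as opposed to Lipschitz, coefficients make the explicit Picard scheme subtle, and it is the step I would scrutinize most carefully; closing it honestly is what the argument must ultimately accomplish.
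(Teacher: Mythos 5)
Your plan is, in outline, exactly the paper's proof: Picard iterates \eqref{f1}, the G-It\^o formula applied to $|y^{k+1}(t)-y^k(t)|^2$, Lemmas \ref{l3} and \ref{l4} for the stochastic integrals, Lemma \ref{Lf} to trade $\|y^{k+1}-y^k\|_q^2$ for the running supremum (legitimate since all iterates share the initial segment $\zeta$), the factorial estimate \eqref{2}, a Borel--Cantelli/Cauchy-in-$L^2_G$ argument, and a passage to the limit in \eqref{f1}. But as written your proposal is not a proof, and by your own account: the increment recursion
\begin{equation*}
\hat{\mathbb{E}}\Big[\sup_{0\le v\le t}|y^{k+2}(v)-y^{k+1}(v)|^2\Big]\le M\int_0^t \hat{\mathbb{E}}\Big[\sup_{0\le r\le s}|y^{k+1}(r)-y^{k}(r)|^2\Big]\,ds
\end{equation*}
is left open, and you correctly explain why it does not follow from $H_2$: the hypothesis bounds $2\langle z(0)-y(0), f(t,z)-f(t,y)\rangle$ only when the state difference in the first slot is formed from the \emph{same} pair $(z,y)$ whose coefficient difference appears in the second slot, whereas the Picard scheme produces the mismatched pairing $2\langle y^{k+2}(v)-y^{k+1}(v),\, f(v,y^{k+1}_v)-f(v,y^{k}_v)\rangle$, and since $H_2$ gives no bound on $|f(t,z)-f(t,y)|$ itself, no Cauchy--Schwarz splitting is available. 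That is a genuine gap, and it is the decisive one: under one-sided monotonicity the explicit Picard scheme does not self-improve in this way.

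You should know that the paper does not close this gap either: at the inductive step it writes ``In view of assumption $H_2$, it follows'' and passes directly from the mismatched inner products to $2(1+2c_1+2c_2^2)\hat{K}\,\hat{\mathbb{E}}\int_0^t\|y^{k+1}-y^k\|_q^2\,dv$ --- precisely the illegitimate application you predicted. The same index mismatch already infects Lemma \ref{Lf2} and the base case, where $H_1$, which bounds $2\langle y(0),f(t,y)\rangle$ with matching argument, is applied to $2\langle y^k(v), f(v,y^{k-1}_v)\rangle$. A further step you assert but could not justify under these hypotheses (and neither does the paper) is the limit passage for the $dv$ and $d\langle B,B\rangle$ integrals: $H_2$ makes $h$ genuinely Lipschitz, so the $dB$ term converges, but $f$ and $g$ satisfy only the one-sided bound, so $f(v,y^{k-1}_v)\to f(v,y_v)$ in $L^2_G$ does not follow from $y^{k-1}_v\to y_v$ without an additional continuity assumption. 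In short, your scrutiny was well placed: to make the theorem honest one must either strengthen the hypotheses (e.g.\ add continuity of $f,g$ in the segment variable and a coercivity/one-sided structure used through a different pairing) or replace Picard iteration by a scheme adapted to monotone coefficients, such as Euler--Maruyama/Carath\'eodory or Yosida-type approximations; the paper's proof, which your proposal mirrors, does not accomplish this.
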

\begin{proof} Consider the Picard iteration sequence $\{y^k(t);t\geq0\}$
defined by \eqref{f1}. Then from \eqref{f1}, we have
\begin{equation*} y^1(t) =\zeta(0)+\int_0^t f (v, y^{0}_{v}) dv + \int_0^tg (v, y^{0}_{v}) d \langle B, B
\rangle (v) +\int_0^t h (v, y^{0}_{v}) dB (v).
\end{equation*}
Applying the G-It\^o formula to $|y^1(t)|^2$ and using similar
arguments as earlier, it follows
\begin{equation*}\begin{split}
\hat{\mathbb{E}}\Big[\sup_{0\leq v\leq t}|y^1(v) |^2\Big]&\leq
\hat{\mathbb{E}}\|\zeta\|^2+ \hat{\mathbb{E}}\Big[\sup_{0\leq v\leq
t}\int_0^t2\langle y^1(v), f(v,y^{0}_{v})\rangle dv\Big]
+\hat{\mathbb{E}}\Big[\sup_{0\leq v\leq t}\int_0^t2\langle y^1(v) ,
h(v,y^{0}_{v})\rangle B(v)\Big]\\&+\hat{\mathbb{E}}\Big[\sup_{0\leq
v\leq t}\int_0^t[2\langle y^1(v)
,g(v,y^{0}_{v})\rangle+|h(v,y^{0}_{v})|^2d \langle B, B \rangle
(v)\Big]\\& \leq \hat{\mathbb{E}}\|\zeta\|^2+
\hat{\mathbb{E}}\int_0^t2\langle y^1(v), f(v,y^{0}_{v})\rangle dv+
2c^2_2\hat{\mathbb{E}}\int_0^t|h(v,y^{0}_{v})|^2dv
+\frac{1}{2}\hat{\mathbb{E}}\Big[\sup_{0\leq v\leq t} |y^1(v)
|^2\Big]\\&+c_1\hat{\mathbb{E}}\int_0^t[2\langle y^1(v),
g(v,y^{0}_{v})\rangle+|h(v,y^{0}_{v})|^2]dv.
\end{split}\end{equation*}
By assumption $H_2$, we have
\begin{equation*}\begin{split}
\hat{\mathbb{E}}\Big[\sup_{0\leq v\leq t}|y^1(v)|^2\Big]&
\leq2\hat{\mathbb{E}}\|\zeta\|^2_q+
2K(1+2c_1+2c_2^2)\hat{\mathbb{E}}\int_0^t(1+\|y^0_v\|^2_q) dv\\&
\leq 2\hat{\mathbb{E}}\|\zeta\|^2_q+
2K(1+2c_1+2c_2^2)T+2K(1+2c_1+2c_2^2)\hat{\mathbb{E}}\int_0^t[e^{-\lambda
t}\|\zeta\|^2_q+\|\zeta\|^2_q] dv\\& \leq 2K(1+2c_1+2c_2^2)T +2[1+
K(1+2c_1+2c_2^2)(T+\lambda^{-1})]\hat{\mathbb{E}}\|\zeta\|^2_q.
\end{split}\end{equation*}
By using lemma \ref{l7}, it follows
\begin{equation*}\begin{split}
\hat{\mathbb{E}}\Big[\sup_{0\leq v\leq t}|y^1(v) -y^0(v)|^2\Big]\leq
(1+\hat{c})\hat{\mathbb{E}}\Big[\sup_{0\leq v\leq
t}|y^1(v)|^2\Big]+(1+\hat{c}^{-1})\hat{\mathbb{E}}\|\zeta\|^2_q \leq
L,
\end{split}\end{equation*}
where $L=(1+\hat{c})[c_3T +[2+
c_3(T+\lambda^{-1})]\hat{\mathbb{E}}\|\zeta\|^2_q]+(1+\hat{c}^{-1})\hat{\mathbb{E}}\|\zeta\|^2_q$.
In a similar fashion as above, from \eqref{f1} we obtain
\begin{equation*}\begin{split}
\hat{\mathbb{E}}\Big[\sup_{0\leq v\leq t}|y^2(v) -y^1(v)|^2\Big]&
\leq \hat{\mathbb{E}}\int_0^t2\langle y^2(v)
-y^1(v),f(v,y^{1}_{v})-f(v,y^{0}_{v})\rangle dv\\&+
2c^2_2\hat{\mathbb{E}}\int_0^t|h(v,y^{1}_{v})-h(v,y^{0}_{v})|^2dv
+\frac{1}{2}\hat{\mathbb{E}}\Big[\sup_{0\leq v\leq t} |y^2(v)
-y^1(v)|^2\Big]\\&+ c_1\hat{\mathbb{E}}\int_0^t[2\langle y^2(v)
-y^1(v),g(v,y^{1}_{v})-g(v,y^{0}_{v})\rangle+|h(v,y^{1}_{v})-h(v,y^{0}_{v})|^2]dv.
\end{split}\end{equation*}
By using assumption $H_2$ and lemma \ref{Lf}, it follows
\begin{equation*}\begin{split}
\hat{\mathbb{E}}\Big[\sup_{0\leq v\leq t}|y^2(v) -y^1(v)|^2\Big]&
\leq
2(1+2c_1+2c^2_2)\hat{K}\hat{\mathbb{E}}\int_0^t\|y^{1}_{v}-y^{0}_{v}\|dv\\&
\leq 2(1+2c_1+2c^2_2)\hat{K}\int_0^t\hat{\mathbb{E}}\Big[\sup_{0\leq
v\leq t}|y^{1}(v)-y^{0}(v)|^2\Big]dv\\&\leq 2(1+2c_1+2c^2_2)\hat{K}
Lt.
\end{split}\end{equation*}
Similarly, we derive
\begin{equation*}\begin{split}
\hat{\mathbb{E}}\Big[\sup_{0\leq v\leq t}|y^3(v) -y^1(v)|^2\Big]&
\leq
2(1+2c_1+2c^2_2)\hat{K}\hat{\mathbb{E}}\int_0^t\|y^{2}_{v}-y^{1}_{v}\|dv\\&
\leq 2(1+2c_1+2c^2_2)\hat{K}\int_0^t\hat{\mathbb{E}}\Big[\sup_{0\leq
v\leq t}|y^{2}(v)-y^{1}(v)|^2\Big]dv\\&\leq
L\frac{1}{2!}[2(1+2c_1+2c^2_2)\hat{K}]^2 t^2.
\end{split}\end{equation*}
Continuing this procedure, we get
\begin{equation*}\begin{split}
\hat{\mathbb{E}}\Big[\sup_{0\leq v\leq t}|y^4(v)
-y^3(v)|^2\Big]&2(1+2c_1+2c^2_2)\hat{K}\hat{\mathbb{E}}\int_0^t\|y^{2}_{v}-y^{1}_{v}\|dv\\&
\leq 2(1+2c_1+2c^2_2)\hat{K}\int_0^t\hat{\mathbb{E}}\Big[\sup_{0\leq
v\leq t}|y^{3}(v)-y^{2}(v)|^2\Big]dv\\& \leq
L\frac{1}{3!}[2(1+2c_1+2c^2_2)\hat{K}]^3 t^3.
\end{split}\end{equation*}
Now for all $k\geq 0$, we have to verify that
\begin{equation}\label{2} \hat{\mathbb{E}}\Big[\sup_{0\leq v\leq
t}|y^{k+1}(t)-y^{k}(t)|^2\Big] \leq \frac{L[Mt]^k}{k!},
\end{equation}
where $L=(1+\hat{c})[c_3T +[2+
c_3(T+\lambda^{-1})]\hat{\mathbb{E}}\|\zeta\|^2_q]+(1+\hat{c}^{-1})\hat{\mathbb{E}}\|\zeta\|^2_q$
and $M=2(1+2c_1+2c^2_2)\hat{K}$. To prove that \eqref{2} is true for
all $k\geq 0$, we use the procedure of mathematical induction as
follows.  For $k=0$, it has been proved above. Next suppose that
\eqref{2} holds for some $k\geq 0$, we have to show that it holds
for $k+1$. Define $\Lambda^{k+2,k+1}(t)=y^{k+2}(t)-y^{k+1}(t)$,
$\hat{f}^{k+1,k}(t)=f(y^{k+1}_t)-f(y^{k}_t)$,
$\hat{g}^{k+1,k}(t)=g(y^{k+1}_t)-g(y^{k}_t)$ and
$\hat{h}^{k+1,k}(t)=h(y^{k+1}_t)-h(y^{k}_t)$. By using the
G-It$\hat{o}$ formula, lemma \ref{l3} and lemma \ref{l4} for any
$t\in[0, T]$, we obtain
\begin{equation*}\begin{split}
&\hat{\mathbb{E}}\Big[\sup_{0\leq v\leq
t}|\Lambda^{k+2,k+1}(v)|^2\Big]\\&\leq
\hat{\mathbb{E}}\Big[\sup_{0\leq v\leq
t}\int_0^t2\langle\Lambda^{k+2,k+1}(v),\hat{f}^{k+1,k}(v)\rangle
dv\Big]+\hat{\mathbb{E}}\Big[\sup_{0\leq v\leq
t}\int_0^t2\langle\Lambda^{k+2,k+1}(v), \hat{h}^{k+1,k}(v)\rangle
dB(v)\Big]\\&+ \hat{\mathbb{E}}\Big[\sup_{0\leq v\leq
t}\int_0^t[2\langle\Lambda^{k+2,k+1}(v),\hat{g}^{k+1,k}(v)\rangle+|\hat{h}^{k+1,k}(v)|^2d
\langle B, B \rangle (v)\Big]\\& \leq
\hat{\mathbb{E}}\int_0^t2\langle\Lambda^{k+2,k+1}(v),\hat{f}^{k+1,k}(v)\rangle
dv+ 2c^2_2\hat{\mathbb{E}}\int_0^t|\hat{h}^{k+1,k}(v)|^2dv
+\frac{1}{2}\hat{\mathbb{E}}\Big[\sup_{0\leq v\leq t}
|\Lambda^{k+2,k+1}(v)|^2\Big]\\&+
c_1\hat{\mathbb{E}}\int_0^t[2\langle\Lambda^{k+2,k+1}(v),\hat{g}^{k+1,k}(v)\rangle+|\hat{h}^{k+1,k}(v)|^2]dv.
\end{split}\end{equation*}
In view of assumption $H_2$, it follows
\begin{equation*}\begin{split}
\hat{\mathbb{E}}\Big[\sup_{0\leq v\leq
t}|y^{k+2}(v)-y^{k+1}(v)|^2\Big]& \leq
2(1+2c_1+2c^2_2)\hat{K}\hat{\mathbb{E}}\int_0^t\|
y^{k+1}-y^{k}\|_{q}^{2}dv.
\end{split}\end{equation*}
By using lemma \ref{Lf}, we obtain
\begin{equation*}\hat{\mathbb{E}}\| y^{k+1}-y^{k}\|_{q}^{2}\leq
\hat{\mathbb{E}}\Big[\sup_{0\leq v\leq
t}|y^{k+1}(v)-y^{k}(v)|^2\Big],
\end{equation*}
which on substituting in the above last inequality gives
\begin{equation*}\begin{split}
\hat{\mathbb{E}}\Big[\sup_{0\leq v\leq
t}|y^{k+2}(v)-y^{k+1}(v)|^2\Big]& \leq
2(1+2c_1+2c^2_2)\hat{K}\int_0^t\hat{\mathbb{E}}\Big[\sup_{0\leq
v\leq t}|y^{k+1}(v)-y^{k}(v)|^2\Big]dv\\& \leq
2(1+2c_1+2c^2_2)\hat{K}\int_0^t\frac{L[Mt]^k}{k!}dv
=\frac{L[Mt]^{k+1}}{(k+1)!}.
\end{split}\end{equation*}
This implies that \eqref{2} holds for $k+1$. Thus by induction
\eqref{2} holds for all $k\geq0$. Next by using lemma \ref{l2} we
derive
\begin{equation*}\begin{split}
\hat{C}\Big\{\sup_{0\leq s\leq
T}|y^{k+1}(t)-y^{k}(t)|^2>\frac{1}{2^k}\Big\}&\leq
2^k\hat{\mathbb{E}}\Big[\sup_{0\leq t\leq
T}|y^{k+1}(t)-y^{k}(t)|^2\Big] \leq\frac{L[2Mt]^k}{k!}
\end{split}\end{equation*}
Since $\sum_{k=0}^{\infty}\frac{L[2Mt]^k}{k!}<\infty$, the
Borel-Cantelli lemma gives that for almost all $w$ there exists a
positive integer $k_0=k_0(w)$ such that
\begin{equation*}
\sup_{0\leq s\leq T}|y^{k+1}(t)-y^{k}(t)|^2\leq\frac{1}{2^k},\,\,as
\,\,k\geq k_0.
\end{equation*}
It implies that q.s. the partial sums
\begin{equation*}
y^0(t)+\sum_{i=0}^{k-1}[y^{k+1}(t)-y^{k}(t)]=y^k(t),
\end{equation*}
are convergent uniformly on $t\in(-\infty,T]$. Denote the limit by
$y(t)$. Then the sequence $\{y^{k}(t)\}_{t\geq0}$ converges
uniformly to $y(t)$ on $t\in(-\infty,T]$. Clearly, $y(t)$ is
continuous and $\mathcal{F}_t$-adapted because
$\{y^{k}(t)\}_{t\geq0}$ is continuous and $\mathcal{F}_t$-adapted.
Also, from \eqref{2}, we can see that $\{y^k(t):k\geq1\}$ is a
cauchy sequence in $L^2_G$. Hence $y^k(t)$ converges to $y(t)$ in
$L^2_G$, that is,
\begin{equation*}
\hat{\mathbb{E}}|y^{k}(t)-y(t)|^2\rightarrow 0,
\,\,as\,\,k\rightarrow\infty.
\end{equation*}
Taking limits $k\rightarrow\infty$, from \eqref{f4} in lemma
\ref{Lf2} we obtain
\begin{equation*}
\hat{\mathbb{E}}\Big[\sup_{0\leq s\leq t}|y(s)|^2\Big]\leq
C_2e^{c_3T}.
\end{equation*}
To show that the sequence of solution maps $\{y^k_t:n\geq1\}$ is
convergent in $L^2_G$, by using lemma \ref{Lf} we get
\begin{equation*}\begin{split}
\hat{\mathbb{E}}\|y^k_t(\zeta)-y_t(\xi)\|^2_q&\leq e^{-\lambda
t}\hat{\mathbb{E}}\|\zeta-\xi\|_q^2
+\hat{\mathbb{E}}\Big[\sup_{0<s\leq t}|y^k(s)-y(s)|^2\Big].
\end{split}\end{equation*}
Since $y^k(t)$ and $y(t)$ have the same initial data and $y^n(t)$ is
convergent to $y(t)$, we therefore have
\begin{equation*}\begin{split}
\hat{\mathbb{E}}\|y^n_t(\zeta)-y_t(\zeta)\|^2_q\rightarrow
0\,\,as\,\,n\rightarrow\infty.
\end{split}\end{equation*}
This implies that the sequence $\{y^n_t\}_{t\geq0}$  converges to
$y_t$ in $L^2_G$ and we have
\begin{equation*}\begin{split}  &\int_0^tg (v, y^{n}_{v}) dv\rightarrow\int_0^tg ( v,y_{v}) dv,\,\,in\,\,L^2_G,\\&
\int_0^th ( v,y^{n}_{v}) d \langle B, B \rangle (v)\rightarrow
\int_0^th ( v,y_{v}) d \langle B, B \rangle (v),\,\,in\,\,L^2_G,\\&
\int_0^th (v, y^{n}_{v}) dB (v)\rightarrow\int_0^th ( v,y_{v}) dB
(v), \,\,in\,\,L^2_G.
\end{split}\end{equation*}
For $t\in[0,T]$, by taking limits $n\rightarrow \infty$ in
\eqref{f1} we derive
\begin{equation*}\lim_{k\rightarrow\infty} y^k(t) =\zeta(0)+\int_0^t\lim_{k\rightarrow\infty} f (v, y^{k-1}_{v}) dv
+ \int_0^t\lim_{k\rightarrow\infty}g (v, y^{k-1}_{v}) d \langle B, B
\rangle (v) +\int_0^t\lim_{k\rightarrow\infty} h (v, y^{k-1}_{v}) dB
(v),
\end{equation*}
which yields that
\begin{equation*} y(t) =\zeta(0)+\int_0^t f (v, y_{v}) dv
+ \int_0^tg ( v,y_{v}) d \langle B, B \rangle (v) +\int_0^t h (v,
y_{v}) dB (v),
\end{equation*}
$t\in[0,T]$. This shows that $y(t)$ is the solution of \eqref{1}.
The proof of existence is complete.
\end{proof}

\section{Error Estimation}
The procedure of the proof of above existence results demonstrates
how to construct the Picard sequence $\{y^k(t);t\geq 0\}$ and gain
the accurate solution $y(t)$. We now show the estimate of error for
Picard approximate solution $y^k(t)$ and exact solution $y(t)$.

\begin{thm} Let $y(t)$ be the unique solution of problem \eqref{1} with initial data \eqref{i} and $y^k(t)$ be defined by \eqref{f1}.
Assume that the assumptions $H_1$ and $H_2$ hold. Then for all
$k\geq 1$,
\begin{equation*}
\hat{\mathbb{E}}\Big[\sup_{0\leq v\leq T}|y^k(v)-y(v)|^2\Big] \leq
\frac{L[Mt]^k}{k!} e^{MT}, \end{equation*} where $L=(1+\hat{c})[c_3T
+[2+
c_3(T+\lambda^{-1})]\hat{\mathbb{E}}\|\zeta\|^2_q]+(1+\hat{c}^{-1})\hat{\mathbb{E}}\|\zeta\|^2_q$
and $M=2(1+2c_1+2c^2_2)\hat{K}$.
\end{thm}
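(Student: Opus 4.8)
The plan is to reduce the statement to a single scalar recursion for the error sequence
$$e_k(t):=\hat{\mathbb{E}}\Big[\sup_{0\leq v\leq t}|y^k(v)-y(v)|^2\Big],\qquad k\geq 0,$$
and then close it by induction. Subtracting the integral form of \eqref{1} from \eqref{f1}, the difference $y^k(t)-y(t)$ solves an equation whose drift, quadratic-variation and diffusion coefficients are $f(v,y^{k-1}_v)-f(v,y_v)$, $g(v,y^{k-1}_v)-g(v,y_v)$ and $h(v,y^{k-1}_v)-h(v,y_v)$. I would apply the G-It\^o formula to $|y^k(t)-y(t)|^2$, take the G-expectation, and estimate the arising integrals exactly as in the proofs of Lemma~\ref{Lf1} and Theorem~\ref{thm1}: Lemmas~\ref{l3} and \ref{l4} together with the properties of the G-It\^o integral produce the constants $c_1,c_2$ and let me absorb the term $\tfrac12\hat{\mathbb{E}}[\sup_{0\leq v\leq t}|y^k(v)-y(v)|^2]$ on the left. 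The monotone condition $H_2$ then bounds all the cross terms by multiples of $\hat K\|y^{k-1}_v-y_v\|_q^2$, and Lemma~\ref{Lf} converts the $\|\cdot\|_q$-norm into the supremum norm. This yields the recursion
$$e_k(t)\leq M\int_0^t e_{k-1}(s)\,ds,\qquad k\geq 1,$$
with $M=2(1+2c_1+2c^2_2)\hat K$, in complete parallel with \eqref{2}.

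With the recursion in hand I would argue by induction on $k$ that $e_k(t)\leq \tfrac{L(Mt)^k}{k!}e^{Mt}$ on $[0,T]$. For the base case $k=0$, since $y^0\equiv\zeta(0)$, Lemma~\ref{l7} applied to $|\,y^0-y\,|^2$ together with the boundedness estimate \eqref{3} of Lemma~\ref{Lf1} gives $e_0(t)\leq Le^{Mt}$, with $L$ the constant already isolated in the proof of Theorem~\ref{thm1}. For the inductive step, inserting the hypothesis $e_{k-1}(s)\leq \tfrac{L(Ms)^{k-1}}{(k-1)!}e^{Ms}$ into the recursion gives
$$e_k(t)\leq \frac{LM^{k}}{(k-1)!}\int_0^t s^{k-1}e^{Ms}\,ds=\frac{L}{(k-1)!}\int_0^{Mt}u^{k-1}e^{u}\,du.$$
The elementary inequality $\tfrac{1}{(k-1)!}\int_0^x u^{k-1}e^{u}\,du\leq \tfrac{x^k}{k!}e^{x}$, which follows by checking that the difference of the two sides vanishes at $x=0$ and has nonnegative derivative, then closes the step and gives $e_k(t)\leq \tfrac{L(Mt)^k}{k!}e^{Mt}$. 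Setting $t=T$ and using $e^{Mt}\leq e^{MT}$ delivers the claimed bound for all $k\geq 1$.

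The main obstacle is not the induction, which is routine, but the derivation of the recursion, and within it the treatment of the cross term $2\langle y^k(v)-y(v),\,f(v,y^{k-1}_v)-f(v,y_v)\rangle$. The state carried by the It\^o expansion of $|y^k-y|^2$ is $y^k-y$, whereas $H_2$ is phrased so as to pair $f(v,y^{k-1}_v)-f(v,y_v)$ with $y^{k-1}-y$; this is exactly the index mismatch that already occurs in the proof of Theorem~\ref{thm1}, and the estimate must be organised so that the surviving bound is the monotone quantity $\hat K\|y^{k-1}_v-y_v\|_q^2$, with no uncontrolled $|f(v,y^{k-1}_v)-f(v,y_v)|^2$ left over (such a term could not be absorbed, since $H_1$ and $H_2$ only control inner products, not the coefficient norms themselves). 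A secondary point requiring care is the base case $e_0(t)\leq Le^{Mt}$, which relies on $M\geq c_3$ so that the factor $e^{c_3 t}$ from \eqref{3} is dominated by $e^{Mt}$.
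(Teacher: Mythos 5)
Your overall architecture (reduce everything to a scalar recursion for $e_k(t)=\hat{\mathbb{E}}\big[\sup_{0\leq v\leq t}|y^k(v)-y(v)|^2\big]$ and then induct on $k$) is clean, and the induction itself is correct: the base case via Lemma \ref{Lf1} and Lemma \ref{l7}, and the elementary inequality $\frac{1}{(k-1)!}\int_0^x u^{k-1}e^u\,du\leq \frac{x^k}{k!}e^x$, all check out. The genuine gap is that the load-bearing step, the recursion $e_k(t)\leq M\int_0^t e_{k-1}(s)\,ds$, is asserted but never derived, and it does not follow from $H_1$--$H_2$ by the route you describe. The G-It\^o expansion of $|y^k(t)-y(t)|^2$ produces the pairing $2\langle y^k(v)-y(v),\,f(v,y^{k-1}_v)-f(v,y_v)\rangle$, whereas $H_2$ controls only $2\langle y^{k-1}(v)-y(v),\,f(v,y^{k-1}_v)-f(v,y_v)\rangle\leq \hat{K}\|y^{k-1}_v-y_v\|_q^2$; the residual term $2\langle y^k(v)-y^{k-1}(v),\,f(v,y^{k-1}_v)-f(v,y_v)\rangle$ can only be estimated through the size of the coefficient increment $|f(v,y^{k-1}_v)-f(v,y_v)|$, which monotonicity does not control. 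You identify this yourself as ``the main obstacle,'' but saying the estimate ``must be organised'' so that only the monotone quantity survives restates the difficulty without resolving it. So the proposal is incomplete at precisely the step that separates the monotone setting from the Lipschitz one (where your recursion is the standard one).

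For comparison, the paper does not obtain your recursion either. It sets $\hat{f}(t)=f(t,y^k_t)-f(t,y_t)$ --- both slots at index $k$, which silently replaces the true coefficient difference coming from \eqref{f1} --- so that $H_2$ applies directly and yields $e_k(t)\leq M\int_0^t\hat{\mathbb{E}}\|y^k_v-y_v\|_q^2\,dv$; it then splits $y^k-y$ into $(y^k-y^{k-1})+(y^{k-1}-y)$, inserts the already-proved successive-difference bound \eqref{2} to reach $e_k(t)\leq \frac{L(Mt)^k}{k!}+M\int_0^t e_{k-1}(v)\,dv$, and concludes by the Gronwall inequality (which at that point is really an induction in $k$, since the integrand carries $e_{k-1}$ rather than $e_k$, and the factor $2$ from the splitting has been dropped). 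Thus the essential ingredient your plan omits is the reuse of \eqref{2}, which also lets the paper avoid any base case; your route instead requires $c_3\leq M$, i.e.\ $K\leq \hat{K}$, which you flag but which holds only after enlarging $\hat{K}$ (and hence $M$). To be fair, the index mismatch you isolate is a defect of the paper's own argument as well, both here and in the proof of Theorem \ref{thm1}; but judged as a self-contained proof, your proposal leaves that decisive estimate unproved, so it does not yet establish the theorem.
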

\begin{proof}
We define $\Lambda(t)=y^k(t)-y(t)$,
$\hat{f}(t)=f(t,y^k_t)-f(t,y_t)$, $\hat{g}(t)=g(t,y^k_t)-g(t,y_t)$
and $\hat{h}(t)=h(t,y^k_t)-h(t,y_t)$. Then in a similar fashion as
earlier we obtain
\begin{equation*}\begin{split}
\hat{\mathbb{E}}\Big[\sup_{0\leq v\leq t}|\Lambda(v)|^2\Big]& \leq
\hat{\mathbb{E}}\int_0^t2\langle\Lambda(v),\hat{f}(v)\rangle dv +
2c^2_2\hat{\mathbb{E}}\int_0^t|\hat{h}(v)|^2dv
+\frac{1}{2}\hat{\mathbb{E}}\Big[\sup_{0\leq v\leq t}
|\Lambda(v)|^2\Big]\\& +
c_1\hat{\mathbb{E}}\int_0^t[2\langle\Lambda(v),\hat{g}(v)\rangle+|\hat{h}(v)|^2]dv.
\end{split}\end{equation*}
In view of assumption $H_2$, it follows
\begin{equation*}\begin{split}
\hat{\mathbb{E}}\Big[\sup_{0\leq v\leq t}|y^k(v)-y(v)|^2\Big]& \leq
2(1+2c_1+2c^2_2)\hat{K}\hat{\mathbb{E}}\int_0^t\| y^k  -
y\|_{q}^{2}dv\\&
\leq2(1+2c_1+2c^2_2)\hat{K}\int_0^t\hat{\mathbb{E}}\Big[\sup_{0\leq
v\leq t}|y^k(v)-y(v)|^2\Big]dv\\& \leq
M\int_0^t\hat{\mathbb{E}}\Big[\sup_{0\leq v\leq
t}|y^k(v)-y^{k-1}(v)|^2\Big]dv+M\int_0^t\hat{\mathbb{E}}\Big[\sup_{0\leq
v\leq t}|y^{k-1}(v)-y(v)|^2\Big]dv
\end{split}\end{equation*}
By using \eqref{2}, we derive
\begin{equation*}\begin{split}
\hat{\mathbb{E}}\Big[\sup_{0\leq v\leq t}|y^k(v)-y(v)|^2\Big]& \leq
\frac{L[Mt]^k}{k!} +M\int_0^t\hat{\mathbb{E}}\Big[\sup_{0\leq v\leq
t}|y^{k-1}(v)-y(v)|^2\Big]dv
\end{split}\end{equation*}
Finally, the grownwall inequality yields,
\begin{equation*}\begin{split}
\hat{\mathbb{E}}\Big[\sup_{0\leq v\leq t}|y^k(v)-y(v)|^2\Big]& \leq
\frac{L[Mt]^k}{k!} e^{Mt},
\end{split}\end{equation*}
for $t\in[0,T]$. By letting $t=T$, we get the desired expression.
The proof stands completed.
\end{proof}
Consider, the following stochastic differential equation driven by
G-Brownian motion $\{B(t);t\geq 0\}$
\begin{equation}\label{n1} dy(t) = f (t, y(t)) dt +g ( t,y(t)) d \langle B, B
\rangle (t)+ h ( t,y(t)) dB (t),
\end{equation}
with initial data $X_0\in\R^n$ and $f, g, h:\Omega\times[0,T]\times
\R^n\rightarrow \R^n$. In \cite{wzl}, Wei et al. have derived the
exponential estimate for the unique solution of problem \eqref{n1}
under the monotone type conditions. However, till now it is not
known that problem \eqref{n1} under the monotone type conditions has
a unique solution. By using the procedure of this article, one can
prove the following existence-uniqueness result.
\begin{cor}  Under the hypothesis $H_1$ and $H_2$, problem
\eqref{n1} admits a unique solution $y(t)\in M^2_G([0,,T];\R^n)$.
\end{cor}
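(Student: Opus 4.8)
The plan is to recognise \eqref{n1} as the non-functional specialisation of \eqref{1}: taking the argument of $f,g,h$ to be the constant path $\vartheta\mapsto y(t)$ (for which $\|y\|_q=|y(t)|$), the present-value equation \eqref{n1} is precisely \eqref{1} with $y_t$ replaced by $y(t)$, and the hypotheses $H_1$, $H_2$ read, for all $y,z\in\R^n$,
\[
2\langle y,f(t,y)\rangle\vee 2\langle y,g(t,y)\rangle\vee|h(t,y)|^2\le K(1+|y|^2),
\]
\[
2\langle z-y,f(t,z)-f(t,y)\rangle\vee 2\langle z-y,g(t,z)-g(t,y)\rangle\vee|h(t,z)-h(t,y)|^2\le\hat K|z-y|^2.
\]
Accordingly the entire argument of Section~3 carries over with $\|\cdot\|_q$ replaced throughout by the Euclidean norm $|\cdot|$; I would simply re-run the uniqueness theorem and Theorem~\ref{thm1} in this simpler setting, noting that the fading-memory device Lemma~\ref{Lf} is no longer needed since the datum $X_0$ is a single point and $\sup_{-\infty<v\le t}$ collapses to $\sup_{0\le v\le t}$.

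For uniqueness, let $y(t)$ and $z(t)$ solve \eqref{n1} with the same datum $X_0$ and set $\Lambda(t)=z(t)-y(t)$, $\hat f(t)=f(t,z(t))-f(t,y(t))$, $\hat g(t)=g(t,z(t))-g(t,y(t))$, $\hat h(t)=h(t,z(t))-h(t,y(t))$. Applying the G-It\^o formula to $|\Lambda(t)|^2$, taking $\hat{\mathbb{E}}$ on both sides, and bounding the $dB$ and $d\langle B,B\rangle$ integrals with the properties of the G-It\^o integral, Lemma~\ref{l3} and Lemma~\ref{l4} (which yields the $2c_2^2$-term and the absorbable $\tfrac12\hat{\mathbb{E}}[\sup_{0\le v\le t}|\Lambda(v)|^2]$-term), hypothesis $H_2$ gives
\[
\hat{\mathbb{E}}\Big[\sup_{0\le v\le t}|z(v)-y(v)|^2\Big]\le M\int_0^t\hat{\mathbb{E}}\Big[\sup_{0\le u\le s}|z(u)-y(u)|^2\Big]\,ds,
\]
with $M=2(1+2c_1+2c_2^2)\hat K$. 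The Gronwall inequality then forces the left-hand side to vanish, so $y\equiv z$ quasi-surely.

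For existence I would run the Picard scheme $y^0(t)\equiv X_0$ and
\[
y^k(t)=X_0+\int_0^t f(v,y^{k-1}(v))\,dv+\int_0^t g(v,y^{k-1}(v))\,d\langle B,B\rangle(v)+\int_0^t h(v,y^{k-1}(v))\,dB(v).
\]
First, the analogue of Lemma~\ref{Lf2}: applying the G-It\^o formula to $|y^k(t)|^2$ and invoking $H_1$ with Gronwall yields a bound $\hat{\mathbb{E}}[\sup_{0\le v\le t}|y^k(v)|^2]\le C\,e^{c_3T}$ that is uniform in $k$, so each iterate lies in $M^2_G([0,T];\R^n)$. Next, exactly as in the derivation of \eqref{2} but with the point-value form of $H_2$, an induction on $k$ gives $\hat{\mathbb{E}}[\sup_{0\le v\le t}|y^{k+1}(v)-y^k(v)|^2]\le L(Mt)^k/k!$. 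The G-Markov inequality (Lemma~\ref{l2}) together with the Borel--Cantelli lemma then delivers q.s.\ uniform convergence on $[0,T]$ of the partial sums to a limit $y(t)$ that is continuous and $\mathcal{F}_t$-adapted, while the same estimate makes $\{y^k\}$ Cauchy in $L^2_G$; passing to the limit $k\to\infty$ in the iteration (the three integrals converging in $L^2_G$ by $H_2$) shows that $y$ solves \eqref{n1}, and the uniform bound places $y\in M^2_G([0,T];\R^n)$.

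I expect no genuinely new obstacle: the whole content is this specialisation, and since the phase-space/fading-memory apparatus drops out, the argument is strictly lighter than that of Theorem~\ref{thm1}. The only points requiring the same care as in Section~3 are producing and absorbing the $\tfrac12$-term via Lemma~\ref{l3}, treating the mutual-variation integral via Lemma~\ref{l4}, and tracking the constants $L$ and $M$ through the induction.
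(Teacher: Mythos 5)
Your proposal is correct and coincides with the paper's intent: the paper gives no separate proof of this corollary, stating only that it follows ``by using the procedure of this article,'' which is exactly the specialisation you carry out (replacing $y_t$ by $y(t)$, dropping Lemma \ref{Lf}, and re-running the uniqueness argument and the Picard scheme of Theorem \ref{thm1} with $\|\cdot\|_q$ replaced by $|\cdot|$). Note only that your appeal to $H_2$ for the $L^2_G$-convergence of the drift and quadratic-variation integrals inherits the same looseness as the paper's own proof of Theorem \ref{thm1}, since $H_2$ gives a one-sided monotone bound for $f$ and $g$ rather than a Lipschitz estimate; this is a shared feature, not a deviation.
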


\section{The exponential estimate}
First, we assume that under hypothesis $H_1$ and $H_2$ problem
\eqref{1} with the given initial data \eqref{i} admits a unique
solution on $[0,\infty)$. Then we derive the $L^2_G$ and exponential
estimates as follows.
\begin{lem} Let $y(t)$ be a unique solution of \eqref{1} on
$t\in[0,\infty)$ and $\hat{\mathbb{E}}\|\zeta\|^2_q<\infty$. Then
under the hypothesis $H_1$ for all $t\geq0,$
\begin{equation*}
\hat{\mathbb{E}}\Big[\sup_{-\infty< v\leq t}|y(v)|^2\Big]\leq C_3
e^{c_3t},\end{equation*} where $C_3= c_3T+
(3+c_3\lambda^{-1})\hat{\mathbb{E}}\|\zeta\|_q^2$,
$c_3=2(1+2c_1+2c^2_2)K$ and $c_1$, $c_2$ are positive constants.
\end{lem}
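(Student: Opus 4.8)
The plan is to follow verbatim the strategy of the boundedness Lemma~\ref{Lf1}, since the asserted bound differs from it only by a repackaging of the additive constant. First I would apply the G-It\^o formula to $|y(t)|^2$ and take the G-expectation of both sides. The drift produces $\hat{\mathbb{E}}\int_0^t 2\langle y(v),f(v,y_v)\rangle\,dv$, the $dB$-integral is a G-martingale-type term to which Lemma~\ref{l3} applies, and the $d\langle B,B\rangle$-integral produces $c_1\hat{\mathbb{E}}\int_0^t[2\langle y(v),g(v,y_v)\rangle+|h(v,y_v)|^2]\,dv$ via Lemma~\ref{l4}. The martingale term I would control by Lemma~\ref{l3} followed by the elementary inequality $2c_2 ab\le 2c_2^2 a^2+\tfrac12 b^2$ (with $a=[\int_0^t|\langle y(v),h(v,y_v)\rangle|^2dv]^{1/2}$ and $b=\sup_{0\le v\le t}|y(v)|$), so that the resulting $\tfrac12\hat{\mathbb{E}}[\sup_{0\le v\le t}|y(v)|^2]$ can be absorbed into the left-hand side.

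Next I would invoke assumption $H_1$ to replace each of $2\langle y(0),f\rangle$, $2\langle y(0),g\rangle$ and $|h|^2$ by $K(1+\|y\|_q^2)$, and then use Lemma~\ref{Lf} (with $p=2$, $\lambda<2q$) to bound $\hat{\mathbb{E}}\|y_v\|_q^2\le e^{-\lambda v}\hat{\mathbb{E}}\|\zeta\|_q^2+\hat{\mathbb{E}}[\sup_{0<s\le v}|y(s)|^2]$. Writing $u(t):=\hat{\mathbb{E}}[\sup_{0\le v\le t}|y(v)|^2]$ and $c_3=2(1+2c_1+2c_2^2)K$, this collapses everything into a linear integral inequality of the form $u(t)\le A+c_3\int_0^t u(v)\,dv$, where $A=2\hat{\mathbb{E}}\|\zeta\|_q^2+c_3(T+\lambda^{-1}\hat{\mathbb{E}}\|\zeta\|_q^2)$; here the estimate $\int_0^t e^{-\lambda v}dv\le\lambda^{-1}$ is what feeds the $\lambda^{-1}$ into the constant.

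Then the Gronwall inequality yields $u(t)\le C_1 e^{c_3 t}$ with $C_1=c_3T+(2+c_3\lambda^{-1})\hat{\mathbb{E}}\|\zeta\|_q^2$, exactly as in Lemma~\ref{Lf1}. To pass to the supremum over $(-\infty,t]$ I would use $\hat{\mathbb{E}}[\sup_{-\infty<v\le t}|y(v)|^2]\le\hat{\mathbb{E}}\|\zeta\|_q^2+u(t)$, which adds one extra copy of $\hat{\mathbb{E}}\|\zeta\|_q^2$. The only genuinely new step, relative to Lemma~\ref{Lf1}, is to fold this additive term into the exponential: since $e^{c_3 t}\ge 1$ for all $t\ge 0$, I would bound $\hat{\mathbb{E}}\|\zeta\|_q^2+C_1 e^{c_3 t}\le(C_1+\hat{\mathbb{E}}\|\zeta\|_q^2)e^{c_3 t}=C_3 e^{c_3 t}$, with $C_3=c_3T+(3+c_3\lambda^{-1})\hat{\mathbb{E}}\|\zeta\|_q^2$, which is precisely the claim.

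I do not expect any real obstacle here: the entire argument is a reprise of the boundedness lemma, and the only delicate bookkeeping is keeping track of the constants $c_1,c_2$ supplied by Lemmas~\ref{l3}--\ref{l4}. The one point worth flagging is that, although the statement is phrased for all $t\ge 0$, the constant $C_3$ still carries the fixed horizon $T$ through the term $c_3T$; hence the estimate is really a uniform-in-$t$ bound on each finite interval $[0,T]$ rather than a genuinely global-in-time one, and the standing assumption of a unique solution on $[0,\infty)$ is what licenses reading it off at any $t\ge 0$.
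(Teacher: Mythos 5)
Your proposal is correct and follows essentially the same route as the paper: both rerun the Lemma \ref{Lf1} computation down to the linear integral inequality $\hat{\mathbb{E}}\big[\sup_{0\leq v\leq t}|y(v)|^2\big]\leq c_3T+(2+c_3\lambda^{-1})\hat{\mathbb{E}}\|\zeta\|_q^2+c_3\int_0^t\hat{\mathbb{E}}\big[\sup_{0\leq v\leq t}|y(v)|^2\big]dv$ and then account for the initial segment via $\hat{\mathbb{E}}\big[\sup_{-\infty<v\leq t}|y(v)|^2\big]\leq\hat{\mathbb{E}}\|\zeta\|_q^2+\hat{\mathbb{E}}\big[\sup_{0\leq v\leq t}|y(v)|^2\big]$. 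The only cosmetic difference is ordering --- the paper adds $\hat{\mathbb{E}}\|\zeta\|_q^2$ and enlarges the integrand to the supremum over $(-\infty,t]$ \emph{before} applying the Gronwall inequality, while you apply Gronwall first (recovering Lemma \ref{Lf1}) and then fold the extra $\hat{\mathbb{E}}\|\zeta\|_q^2$ into the exponential using $e^{c_3t}\geq 1$ --- and both yield the identical constant $C_3=c_3T+(3+c_3\lambda^{-1})\hat{\mathbb{E}}\|\zeta\|_q^2$.
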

\begin{proof} By straightforward calculations in a similar procedure
used in lemma \ref{Lf1}, we obtain
\begin{equation}\label{n2}
\hat{\mathbb{E}}\Big[\sup_{0\leq v\leq t}|y(v)|^2\Big] \leq c_3T+
(2+c_3\lambda^{-1})\hat{\mathbb{E}}\|\zeta\|_q^2
+c_3\hat{\mathbb{E}}\int_0^t\Big[\sup_{0\leq v\leq
t}|y(v)|^2\Big]dv.
\end{equation}
Noticing that
\begin{equation*}
\hat{\mathbb{E}}\Big[\sup_{-\infty< v\leq t}|y(v)|^2\Big]\leq
\hat{\mathbb{E}}\|\zeta\|_q^2+ \hat{\mathbb{E}}\Big[\sup_{0\leq
v\leq t}|y(v)|^2\Big],
\end{equation*}
it follows
\begin{equation*}\begin{split}
\hat{\mathbb{E}}\Big[\sup_{-\infty< v\leq t}|y(v)|^2\Big]&\leq
\hat{\mathbb{E}}\|\zeta\|_q^2+ c_3T+
(2+c_3\lambda^{-1})\hat{\mathbb{E}}\|\zeta\|_q^2
+c_3\hat{\mathbb{E}}\int_0^t\Big[\sup_{0\leq v\leq
t}|y(v)|^2\Big]dv\\& \leq  c_3T+
(3+c_3\lambda^{-1})\hat{\mathbb{E}}\|\zeta\|_q^2
+c_3\hat{\mathbb{E}}\int_0^t\Big[\sup_{-\infty\leq v\leq
t}|y(v)|^2\Big]dv\\&
\end{split}\end{equation*}
Finally, by using the Grownwall inequality, it follows
\begin{equation*}
\hat{\mathbb{E}}\Big[\sup_{-\infty< v\leq t}|y(v)|^2\Big]\leq[c_3T+
(3+c_3\lambda^{-1})\hat{\mathbb{E}}\|\zeta\|_q^2]e^{c_3t}.\end{equation*}
The proof is complete.
\end{proof}

\begin{thm}\label{thm2} Let $y(t)$ be a unique solution of \eqref{1} on
$t\in[0,\infty)$ and $\hat{\mathbb{E}}\|\zeta\|^2_q<\infty$. Then
for all $t\geq 0$,
\begin{equation*}
\lim_{t\rightarrow\infty}\sup \frac{1}{t}log|y(t)|\leq \alpha,
\end{equation*}
where $\alpha=K(1+2c_1+4c^2_2)$.
\end{thm}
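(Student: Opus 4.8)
The plan is to reduce the pathwise (quasi-sure) growth statement to a quantitative moment estimate on unit time intervals, and then pass back to the pathwise bound by combining the G-Markov inequality with the Borel--Cantelli lemma. Since $\frac1t\log|y(t)|=\frac{1}{2t}\log|y(t)|^2$, it suffices to control $\hat{\mathbb{E}}\big[\sup_{0\le v\le t}|y(v)|^2\big]$ with an explicit exponential rate and then to establish a quasi-sure bound of the form $\sup_{n\le v\le n+1}|y(v)|^2\le e^{(2\alpha+\ep)n}$ for all large integers $n$.

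First I would derive the moment estimate. Applying the G-It\^o formula to $|y(t)|^2$, taking $\hat{\mathbb{E}}$ and the supremum over $[0,t]$, and estimating the three stochastic terms exactly as in Lemmas \ref{Lf1} and \ref{Lf2}: the $dB$-integral is bounded via Lemma \ref{l3} and a Young-type inequality (producing the constant $c_2$ and absorbing a $\frac12\hat{\mathbb{E}}[\sup_{0\le v\le t}|y(v)|^2]$ on the left-hand side), while the $d\langle B,B\rangle$-integral is bounded via Lemma \ref{l4} (producing $c_1$). Hypothesis $H_1$ then replaces $2\langle y(v),f(v,y_v)\rangle$, $2\langle y(v),g(v,y_v)\rangle$ and $|h(v,y_v)|^2$ by $K(1+\|y_v\|_q^2)$, and Lemma \ref{Lf} converts $\|y_v\|_q^2$ into $\hat{\mathbb{E}}\|\zeta\|_q^2+\sup_{0\le v\le t}|y(v)|^2$. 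Collecting the coefficients of the $|h|^2$-terms exactly as in the intermediate step of Lemma \ref{Lf2} gives the growth constant $2\alpha=2K(1+2c_1+4c_2^2)$, and the Gronwall inequality yields
\begin{equation*}
\hat{\mathbb{E}}\Big[\sup_{0\le v\le t}|y(v)|^2\Big]\le C\,e^{2\alpha t},\qquad t\ge 0,
\end{equation*}
for a finite constant $C$ depending only on $K,c_1,c_2,\lambda$ and $\hat{\mathbb{E}}\|\zeta\|_q^2$.

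Next I would run the Borel--Cantelli argument. Fix $\ep>0$. For each integer $n\ge 0$, Lemma \ref{l2} (the G-Markov inequality, with $p=1$) applied to $\sup_{0\le v\le n+1}|y(v)|^2$ at threshold $e^{(2\alpha+\ep)n}$ gives
\begin{equation*}
\hat{C}\Big\{\sup_{n\le v\le n+1}|y(v)|^2>e^{(2\alpha+\ep)n}\Big\}\le e^{-(2\alpha+\ep)n}\,\hat{\mathbb{E}}\Big[\sup_{0\le v\le n+1}|y(v)|^2\Big]\le C e^{2\alpha}\,e^{-\ep n}.
\end{equation*}
Since $\sum_{n\ge 0}C e^{2\alpha}e^{-\ep n}<\infty$, the Borel--Cantelli lemma in the G-capacity setting (as already invoked in the proof of Theorem \ref{thm1}) produces a polar set off which there is $n_0=n_0(w)$ with $\sup_{n\le v\le n+1}|y(v)|^2\le e^{(2\alpha+\ep)n}$ for all $n\ge n_0$. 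Hence, for $t\in[n,n+1]$ with $n\ge n_0$,
\begin{equation*}
\frac1t\log|y(t)|=\frac{1}{2t}\log|y(t)|^2\le \frac{(2\alpha+\ep)n}{2t}\le \frac{2\alpha+\ep}{2},
\end{equation*}
so that $\limsup_{t\to\infty}\frac1t\log|y(t)|\le \alpha+\frac{\ep}{2}$ quasi-surely. As $\ep>0$ is arbitrary and a countable union of polar sets is polar, taking $\ep\downarrow 0$ gives $\limsup_{t\to\infty}\frac1t\log|y(t)|\le\alpha$, which is the assertion.

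The main obstacle is Step~1: tracking the constants through the G-It\^o estimates so that the exponent in the moment bound is exactly $2\alpha=2K(1+2c_1+4c_2^2)$, in particular correctly accounting for the two separate $|h|^2$ contributions (one from the quadratic-variation integral through $c_1$, one from the martingale integral through $c_2$) and ensuring the $\frac12\sup$-absorption is legitimate before Gronwall is applied. The remaining steps are routine, the only subtlety being that every exceptional event must be handled at the level of G-capacity $\hat C$, so that the conclusion genuinely holds quasi-surely rather than merely under a single $\mathbb{P}\in\mathcal{P}$.
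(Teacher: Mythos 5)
Your proposal is correct and takes essentially the same route as the paper: the exponential $L^2_G$ moment bound obtained via the G-It\^o formula, hypothesis $H_1$, Lemma \ref{Lf} and Gronwall (this is \eqref{n2}/\eqref{n} in the paper), followed by the G-Markov inequality (Lemma \ref{l2}) on unit intervals and the Borel--Cantelli lemma at the level of the capacity $\hat C$. The only divergence concerns the constant: the paper's derivation actually gives the sharper rate $c_3=2K(1+2c_1+2c_2^2)$ and its proof concludes with $\alpha=K(1+2c_1+2c_2^2)$, so the $4c_2^2$ in the theorem statement (which you matched with $2\alpha=2K(1+2c_1+4c_2^2)$) appears to be a typo in the paper; since your exponent is the larger one, your argument still yields the stated inequality.
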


\begin{proof}
From \eqref{n2}, we derive
\begin{equation}\label{n}
\hat{\mathbb{E}}\Big[\sup_{0\leq v\leq t}|y(v)|^2\Big]\leq
C_1e^{c_3t},\end{equation} where $C_1=c_3T+
(2+c_3\lambda^{-1})\hat{\mathbb{E}}\|\zeta\|_q^2$. By virtue of the
above result \eqref{n}, for each $m=1,2,3,...,$ we have
\begin{equation*}
\hat{\mathbb{E}}\Big[\sup_{m-1\leq t\leq m}|y(t)|^2\Big]\leq
C_1e^{c_3m}.\end{equation*} For any $\epsilon>0$, by using lemma
\ref{l2} we get
\begin{equation*}\begin{split}
\hat{C}\Big\{w:\sup_{m-1\leq t\leq
m}|y(t)|^2>e^{(c_3+\epsilon)m}\Big\}&\leq
\frac{\hat{\mathbb{E}}\Big[\sup_{m-1\leq t\leq
m}|y(t)|^2\Big]}{e^{(c_3+\epsilon)m}}\\&
\leq\frac{C_1e^{c_3m}}{e^{(c_3+\epsilon)m}} =C_1e^{-\epsilon m}.
\end{split}\end{equation*}
But for almost all $w\in\Omega$, the Borel-Cantelli lemma yields
that there exists a random integer $m_0=m_0(w)$ so that
\begin{equation*}
\sup_{m-1\leq t\leq m}|y(t)|^2\leq e^{(c_3+\epsilon)m},\,\,
\textit{as}\,\,m\geq m_0,\end{equation*} which implies
\begin{equation*}\begin{split}
\lim_{t\rightarrow\infty}\sup \frac{1}{t}log|y(t)|&\leq
\frac{2K(1+2c_1+2c^2_2)+\epsilon}{2}=
K(1+2c_1+2c^2_2)+\frac{\epsilon}{2},
\end{split}\end{equation*}
but $\epsilon$ is arbitrary and the above result reduces to
\begin{equation*}
\lim_{t\rightarrow\infty}\sup \frac{1}{t}log|y(t)|\leq \alpha,
\end{equation*}
where $\alpha=K(1+2c_1+2c^2_2)$. The proof is complete.
\end{proof}

\section{Existence and uniqueness with weak monotonicity}
In this section, we assume that the coefficients of problem
\eqref{1} with the initial data \eqref{i} satisfy the following weak
nonlinear monotonicity conditions.
\begin{itemize}
\item[${\bf(A_1)}$] There exists a non-decreasing and concave function $\kappa(.):\R^+\rightarrow \R^+$
 with $\kappa(0)=0,$ $\kappa(z)>0$ for $z>0$ and
$\int_{0+}\frac{dz}{\kappa(z)}=\infty$ such that for any $y,z\in
C_q((-\infty,0];\R^n)$,
\begin{equation*}\begin{split}
&2\Big\langle z(0)-y(0),f(z,t)-f(y,t)\Big\rangle\vee 2\Big\langle
z(0)-y(0),g(z,t)-g(y,t)\Big\rangle\\&\vee |h(z,t)-h(y,t)|^2\leq
\kappa(\|z-y\|_q^2),\,\,\,t\in[0,T],
\end{split}\end{equation*}
where for all $z\geq0$ and $a,b\in\R^+$, $\kappa(z)\leq a+b z$.
 \item[${\bf(A_{2})}$] For any
 $t\in[0,T]$ and $f(t,0), g(t,0), h(t,0)\in L^2,$
 there exists a positive constant $\tilde{K}$ such that
\begin{equation*}|f(t,0)|^2\vee|g(t,0)|^2\vee|h(t,0)|^2\leq
\tilde{K}.\end{equation*} \end{itemize} We now prove two useful
lemmas. They will be used in the upcoming existence-uniqueness
results.
\begin{lem}\label{Lf3} Let assumptions $A_1$ and $A_2$ hold and
$\hat{\mathbb{E}}\|\zeta\|_q^2<\infty$. Then
\begin{equation*}
\hat{\mathbb{E}}\Big[\sup_{0\leq v\leq t}|y^k(v)|^2\Big]\leq C,
\end{equation*}
where $C=C_4e^{\hat{C_4}T}$, $C_4=(1+\hat{c}b\lambda^{-1}+\hat{c}bT)
\hat{\mathbb{E}}\|\zeta\|_q^2 +\hat{c}(\tilde{K}+a)T$,
$\hat{C_4}=(2+2c_1+\hat{c}b)$, $\hat{c}=2(1+2c_1+2c^2_2)$ and
$c_1,c_2$ are positive constants.
\end{lem}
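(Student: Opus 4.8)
The plan is to reproduce the argument of Lemma~\ref{Lf2}, the one genuinely new ingredient being that the weak monotonicity $A_1$ controls only the \emph{increments} of the coefficients about zero, so a coercivity/linear-growth estimate must first be manufactured from $A_1$ together with $A_2$. Fixing $\lambda<2q$ and writing $f(v,y^{k-1}_v)=[f(v,y^{k-1}_v)-f(v,0)]+f(v,0)$ (and likewise for $g$ and $h$), I would bound the increment by $\kappa(\|\cdot\|_q^2)$ through $A_1$, invoke the sublinearity $\kappa(z)\le a+bz$ assumed in $A_1$, and treat the value at zero by Young's inequality combined with $A_2$. This yields estimates of the generic form
\[
2\langle y(0),f(t,y)\rangle\,\vee\,2\langle y(0),g(t,y)\rangle\le (a+\tilde{K})+|y(0)|^2+b\|y\|_q^2,
\]
together with $|h(t,y)|^2\le 2(a+\tilde{K})+2b\|y\|_q^2$; these play exactly the role that $H_1$ played in Lemma~\ref{Lf2}.

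Next I would apply the G-It\^{o} formula to $|y^k(t)|^2$, take the G-expectation, drop the zero-mean stochastic integral using the properties of the G-It\^{o} integral, and use Lemmas~\ref{l3} and~\ref{l4} to absorb the martingale and quadratic-variation contributions, transferring the resulting $\tfrac12\hat{\mathbb{E}}[\sup_{0\le v\le t}|y^k(v)|^2]$ term to the left-hand side. This reproduces the intermediate inequality of Lemma~\ref{Lf2}, but with the $f$, $g$ and $h$ integrands now estimated by the coercivity bounds above evaluated at the iterate $y^{k-1}_v$. The delay norms are then removed by Lemma~\ref{Lf}, which replaces $\hat{\mathbb{E}}\|y^{k-1}_v\|_q^2$ by $e^{-\lambda v}\hat{\mathbb{E}}\|\zeta\|_q^2+\hat{\mathbb{E}}[\sup_{0<s\le v}|y^{k-1}(s)|^2]$; integrating the factor $e^{-\lambda v}$ against the constant $\hat{c}b$ produces the $\hat{c}b\lambda^{-1}\hat{\mathbb{E}}\|\zeta\|_q^2$ contribution, while the constant integrands $(a+\tilde{K})$ produce the $\hat{c}(\tilde{K}+a)T$ term of $C_4$.

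Finally, the recursion in $k$ is closed exactly as in Lemma~\ref{Lf2}: since the integrand carries $y^{k-1}$ while the left-hand side carries $y^k$, I would pass to $\max_{1\le k\le n}$, use the elementary bound $\max_{1\le k\le n}\hat{\mathbb{E}}[\sup_{0\le v\le t}|y^{k-1}(v)|^2]\le \hat{\mathbb{E}}\|\zeta\|_q^2+\max_{1\le k\le n}\hat{\mathbb{E}}[\sup_{0\le v\le t}|y^{k}(v)|^2]$ to make the inequality self-referential in the single quantity $\max_{1\le k\le n}\hat{\mathbb{E}}[\sup_{0\le v\le t}|y^k(v)|^2]$, and then apply the Gronwall inequality. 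The exponent $\hat{C_4}=2+2c_1+\hat{c}b$ emerges as the sum of the coefficient $(2+2c_1)$ of the direct $|y^{k-1}(v)|^2$ terms and the coefficient $\hat{c}b$ of the $\|y^{k-1}_v\|_q^2$ terms; letting $n$ be arbitrary and then taking $t=T$ gives the claimed bound, with $C=C_4 e^{\hat{C_4}T}$.

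The step I expect to be the real work is the first one: converting the one-sided, concave monotonicity of $A_1$ into a genuine coercivity and linear-growth estimate, since $A_1$ alone says nothing about the magnitude of $f(t,y)$, $g(t,y)$ or $h(t,y)$, and it is precisely the sublinear majorant $\kappa(z)\le a+bz$ (rather than the concavity, which will instead be needed for uniqueness via Lemma~\ref{fl}) that allows the Gronwall argument to close in $L^2_G$. Everything downstream is the routine It\^{o}--Gronwall bookkeeping already carried out for the monotone case in Lemma~\ref{Lf2}.
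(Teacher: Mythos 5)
Your proposal follows essentially the same route as the paper's proof: the same decomposition of $f$, $g$, $h$ about zero so that $A_1$'s majorant $\kappa(z)\le a+bz$ handles the increments and Young's inequality with $A_2$ handles the values at zero, the same G-It\^{o}/Lemma~\ref{l3}--\ref{l4} bookkeeping with the $\tfrac12\hat{\mathbb{E}}[\sup|y^k|^2]$ absorption, Lemma~\ref{Lf} producing the $\hat{c}b\lambda^{-1}\hat{\mathbb{E}}\|\zeta\|_q^2$ and $\hat{c}(\tilde{K}+a)T$ contributions, and the $\max_{1\le k\le n}$ device plus Gronwall yielding exactly $C_4$ and $\hat{C_4}=2+2c_1+\hat{c}b$. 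One shared caveat: like the paper, you invoke $A_1$ with $y^k(v)$ in the inner-product slot but $y^{k-1}_v$ in the coefficient difference (and your displayed coercivity bound has the same argument in both slots), a slot mismatch the paper itself glosses over, so your attempt is faithful to, and no weaker than, the published argument.
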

\begin{proof} Consider the Picard iteration sequence $\{y^k(t);t\geq0\}$
defined by \eqref{f1}. By using the G-It$\hat{o}$ formula, lemma
\ref{l3} and lemma \ref{l4} for any $t\in[0, T]$, we derive
\begin{equation*}\begin{split}
&\hat{\mathbb{E}}\Big[\sup_{0\leq v\leq t}|y^k(v)|^2\Big]\\&\leq
\hat{\mathbb{E}}\|\zeta\|_q^2+ \hat{\mathbb{E}}\Big[\sup_{0\leq
v\leq t}\int_0^t2\langle y^k(v),f(v,y^{k-1}_v)\rangle
dv\Big]+\hat{\mathbb{E}}\Big[\sup_{0\leq v\leq t}\int_0^t2\langle
y^k(v), h(v,y^{k-1}_v)\rangle dB(v)\Big]\\&+
\hat{\mathbb{E}}\Big[\sup_{0\leq v\leq t}\int_0^t[2\langle
y^k(v),g(v,y^{k-1}_v)\rangle+|h(v,y^{k-1}_v)|^2d \langle B, B
\rangle (v)\Big]\\& \leq \hat{\mathbb{E}}\|\zeta\|_q^2+
\hat{\mathbb{E}}\int_0^t2\langle y^k(v),f(v,y^{k-1}_v)\rangle dv +
2c^2_2\hat{\mathbb{E}}\int_0^t|h(v,y^{k-1}_v)|^2dv
+\frac{1}{2}\hat{\mathbb{E}}\Big[\sup_{0\leq v\leq t}
|y^k(v)|^2\Big]\\&+ c_1\hat{\mathbb{E}}\int_0^t[2\langle
y^k(v),g(v,y^{k-1}_v)\rangle+|h(v,y^{k-1}_v)|^2]dv.
\end{split}\end{equation*}
By using assumptions $A_1$ and $A_2$, it follows
\begin{equation*}\begin{split}
&\hat{\mathbb{E}}\Big[\sup_{0\leq v\leq t}|y^k(v)|^2\Big]\\&\leq
\hat{\mathbb{E}}\|\zeta\|_q^2+
2\hat{\mathbb{E}}\int_0^t[\kappa(\|y^{k-1}\|_q^2)+2|y^{k}(v)||f(v,0)|]
dv +
4c^2_2\hat{\mathbb{E}}\int_0^t[\kappa(\|y^{k-1}\|_q^2)+|h(v,0)|^2]dv
\\&+
2c_1\hat{\mathbb{E}}\int_0^t[\kappa(\|y^{k-1}\|_q^2)+2|y^{k}(v)||g(v,0)|+
\kappa(\|y^{k-1}\|_q^2)+|h(v,0)|^2]dv
\\&
\leq
\hat{\mathbb{E}}\|\zeta\|_q^2+2\hat{\mathbb{E}}\int_0^t[\kappa(\|y^{k-1}\|_q^2)+|y^{k}(v)|^2+|f(v,0)|^2]
dv +
4c^2_2\hat{\mathbb{E}}\int_0^t[\kappa(\|y^{k-1}\|_q^2)+|h(v,0)|^2]dv
\\&+
2c_1\hat{\mathbb{E}}\int_0^t[\kappa(\|y^{k-1}\|_q^2)+|y^{k}(v)|^2+|g(v,0)|^2+
\kappa(\|y^{k-1}\|_q^2)+|h(v,0)|^2]dv
\\&
\leq
\hat{\mathbb{E}}\|\zeta\|_q^2+\hat{c}\tilde{K}T+\hat{c}aT+2(1+c_1)\hat{\mathbb{E}}\int_0^t|y^{k}(v)|^2dv\\&+
\hat{c}b\hat{\mathbb{E}}\int_0^t\|y^{k-1}\|_q^2dv,
\end{split}\end{equation*}
where $\hat{c}=2(1+2c^2_2+2c_1)$. By virtue of lemma \ref{Lf}, we
have
\begin{equation*}\begin{split} \hat{\mathbb{E}}\Big[\sup_{0\leq
v\leq t}|y^k(v)|^2\Big]&\leq
\hat{\mathbb{E}}\|\zeta\|_q^2+\hat{c}(\tilde{K}+a)T+2(1+c_1)\hat{\mathbb{E}}\int_0^t|y^{k}(v)|^2dv\\&+
\hat{c}b\hat{\mathbb{E}}\int_0^t[\|\zeta\|_q^2e^{-\lambda
v}+\sup_{0\leq v\leq t}|y^{k-1}(v)|^2]dv\\& \leq
\hat{\mathbb{E}}\|\zeta\|_q^2+\hat{c}(\tilde{K}+a)T+2(1+c_1)\int_0^t\hat{\mathbb{E}}\Big[\sup_{0\leq
v\leq t}|y^{k}(v)|^2\Big]dv\\&+
\hat{c}b\lambda^{-1}\hat{\mathbb{E}}\|\zeta\|_q^2+
\hat{c}b\int_0^t\hat{\mathbb{E}}\Big[\sup_{0\leq v\leq
t}|y^{k-1}(v)|^2\Big]dv.
\end{split}\end{equation*}
Noticing that
\begin{equation*}\begin{split}\max_{1\leq k\leq n}\hat{\mathbb{E}}\Big[\sup_{0\leq v\leq
t}|y^{k-1}(v)|^2\Big]&\leq
\max\Big\{\hat{\mathbb{E}}\|\zeta\|_q^2,\max_{1\leq k\leq
n}\hat{\mathbb{E}}\Big[\sup_{0\leq v\leq
t}|y^{k}(v)|^2\Big]\Big\}\\&\leq
\hat{\mathbb{E}}\|\zeta\|_q^2+\max_{1\leq k\leq
n}\hat{\mathbb{E}}\Big[\sup_{0\leq v\leq t}|y^{k}(v)|^2\Big],
\end{split}\end{equation*}
we derive
\begin{equation*}\begin{split}
\max_{1\leq k\leq n}\hat{\mathbb{E}}\Big[\sup_{0\leq v\leq
t}|y^k(v)|^2\Big]&
  \leq (1+\hat{c}b\lambda^{-1}+\hat{c}bT)
\hat{\mathbb{E}}\|\zeta\|_q^2 +\hat{c}(\tilde{K}+a)T
\\&+(2+2c_1+\hat{c}b)\int_0^t\max_{1\leq k\leq
n}\hat{\mathbb{E}}\Big[\sup_{0\leq v\leq t}|y^{k}(v)|^2\Big]dv\\&
\end{split}\end{equation*}
By the Grownwall inequality, it follows
\begin{equation*}
\max_{1\leq k\leq n}\hat{\mathbb{E}}\Big[\sup_{0\leq v\leq
t}|y^k(v)|^2\Big]\leq C_4e^{\hat{C_4}t},
\end{equation*}
where $C_4=(1+\hat{c}b\lambda^{-1}+\hat{c}bT)
\hat{\mathbb{E}}\|\zeta\|_q^2 +\hat{c}(\tilde{K}+a)T$ and
$\hat{C_4}=(2+2c_1+\hat{c}b)$. But $n$ is arbitrary and letting
$t=T$, we get the desired expression. The proof is complete.
\end{proof}

\begin{lem}\label{Lf4} Let assumption $A_1$ holds. Then for
$t\in[0,T]$,
\begin{equation*}\begin{split}\hat{\mathbb{E}}\Big[\sup_{0<s\leq t}|y^{k+m}(s)-y^{k}(s)|^2\Big] &\leq
\hat{c}\int_0^t\kappa\Big(\hat{\mathbb{E}}\Big[\sup_{0<s\leq
t}|y^{k+m-1}(s)-y^{k-1}(s)|^2\Big]\Big)ds \leq \tilde{C} t,
\end{split}\end{equation*}
where $\tilde{C}=\hat{c}\kappa(4C)$, $\hat{c}=2(1+2c_1+2c^2_2)$ and
$c_1,c_2$ are positive constants.
\end{lem}
\begin{proof} Define $\Lambda^{k+m,k}(t)=y^{k+m}(t)-y^{k}(t)$,
$\hat{f}^{k+m-1,k-1}(t)=f(t,y^{k+m-1}_t)-f(t,y^{k-1}_t)$,
$\hat{g}^{k+m-1,k-1}(t)=g(t,y^{k+m-1}_t)-g(t,y^{k-1}_t)$ and
$\hat{h}^{k+m-1,k-1}(t)=h(t,y^{k+m-1}_t)-h(t,y^{k-1}_t)$. By using
the G-It$\hat{o}$ formula, lemma \ref{l3} and lemma \ref{l4} for any
$t\in[0, T]$, it follows
\begin{equation*}\begin{split}
&\hat{\mathbb{E}}\Big[\sup_{0\leq v\leq
t}|\Lambda^{k+m,k}(v)|^2\Big]\\&\leq
\hat{\mathbb{E}}\Big[\sup_{0\leq v\leq
t}\int_0^t2\langle\Lambda^{k+m,k}(v),\hat{f}^{k+m-1,k-1}(v)\rangle
dv\Big]+ \hat{\mathbb{E}}\Big[\sup_{0\leq v\leq
t}\int_0^t2\langle\Lambda^{k+m,k}(v), \hat{h}^{k+m-1,k-1}(v)\rangle
dB(v)\Big]\\& +\hat{\mathbb{E}}\Big[\sup_{0\leq v\leq
t}\int_0^t[2\langle\Lambda^{k+m,k}(v),\hat{g}^{k+m-1,k-1}(v)\rangle+|\hat{h}^{k+m-1,k-1}(v)|^2d
\langle B, B \rangle (v)\Big]\\& \leq
\hat{\mathbb{E}}\int_0^t2\langle\Lambda^{k+m,k}(v),\hat{f}^{k+m-1,k-1}(v)\rangle
dv + 2c^2_2\hat{\mathbb{E}}\int_0^t|\hat{h}^{k+m-1,k-1}(v)|^2dv
+\frac{1}{2}\hat{\mathbb{E}}\Big[\sup_{0\leq v\leq t}
|\Lambda^{k+m,k}(v)|^2\Big]\\& +
c_1\hat{\mathbb{E}}\int_0^t[2\langle\Lambda^{k+m,k}(v),\hat{g}^{k+m-1,k-1}(v)\rangle+|\hat{h}^{k+m-1,k-1}(v)|^2]dv.
\end{split}\end{equation*}
In view of assumption $A_1$ and lemma \ref{Lf3}, it follows
\begin{equation*}\begin{split}
\hat{\mathbb{E}}\Big[\sup_{0\leq v\leq
t}|y^{k+m}(v)-y^{k}(v)|^2\Big]& \leq
2(1+2c_1+2c^2_2)\hat{\mathbb{E}}\int_0^t\kappa(\| y^{k+m-1} -
y^{k-1}\|_{q}^{2})dv\\& \leq
2(1+2c_1+2c^2_2)\int_0^t\kappa\Big(\hat{\mathbb{E}}\Big[\sup_{0\leq
v\leq t}| y^{k+m-1}(v) - y^{k-1}(v)|_{q}^{2}\Big]\Big)dv\\&
\leq\hat{k}\kappa(4C)t= \tilde{C}t
\end{split}\end{equation*}
where $\tilde{C}=\hat{c}\kappa(4C)$ and
$\hat{c}=2c_2(1+2c_1+2c^2_2)$. The proof is complete.
\end{proof}
\begin{lem} Let assumptions $A_1$ and $A_2$ hold. Let $y(t)$ be a unique solution of problem \eqref{1} with initial data \eqref{i}.
 Then $y(t)$ is bounded, in particular, $y(t)\in
M^2_G((-\infty,T];\R^n)$.
\end{lem}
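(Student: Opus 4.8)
The plan is to adapt the argument of Lemma \ref{Lf3}, replacing the Picard iterates $y^k$ by the genuine solution $y$, so that no induction over $k$ is needed and the estimate follows from a single Gronwall application. First I would apply the G-It\^o formula to $|y(t)|^2$, take the G-expectation on both sides, and invoke the properties of the G-It\^o integral together with Lemma \ref{l3} and Lemma \ref{l4}. As in the earlier lemmas, this produces constants $c_1>0$ and $c_2>0$ and, after absorbing the martingale part via the factor $\tfrac12\hat{\mathbb{E}}[\sup_{0\le v\le t}|y(v)|^2]$, yields
\begin{equation*}
\hat{\mathbb{E}}\Big[\sup_{0\le v\le t}|y(v)|^2\Big]\le \hat{\mathbb{E}}\|\zeta\|_q^2+\hat{\mathbb{E}}\int_0^t 2\langle y(v),f(v,y_v)\rangle\,dv+2c_2^2\hat{\mathbb{E}}\int_0^t|h(v,y_v)|^2\,dv+c_1\hat{\mathbb{E}}\int_0^t\big[2\langle y(v),g(v,y_v)\rangle+|h(v,y_v)|^2\big]\,dv.
\end{equation*}

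The central step is to estimate the integrands using $A_1$ and $A_2$. Since $A_1$ controls only differences, I would apply it with reference point $0$: writing $f(v,y_v)=[f(v,y_v)-f(v,0)]+f(v,0)$ and using $y(v)=y_v(0)$ gives $2\langle y(v),f(v,y_v)-f(v,0)\rangle\le\kappa(\|y_v\|_q^2)$, while the residual cross term is controlled by Young's inequality, $2|y(v)||f(v,0)|\le|y(v)|^2+|f(v,0)|^2$, and $A_2$ bounds $|f(v,0)|^2,|g(v,0)|^2,|h(v,0)|^2\le\tilde K$. Treating $g$ and $h$ analogously reduces the right-hand side to a combination of $\hat{\mathbb{E}}\int_0^t\kappa(\|y_v\|_q^2)\,dv$, a term $2(1+c_1)\hat{\mathbb{E}}\int_0^t|y(v)|^2\,dv$, and constant contributions of order $\hat c(\tilde K+a)T$, exactly mirroring the bookkeeping in Lemma \ref{Lf3} with $\hat c=2(1+2c_1+2c_2^2)$.

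Next I would linearize the nonlinearity using the growth hypothesis $\kappa(z)\le a+bz$ from $A_1$, so that $\kappa(\|y_v\|_q^2)\le a+b\|y_v\|_q^2$, and then apply Lemma \ref{Lf} to pass from the segment norm to the pointwise supremum, $\hat{\mathbb{E}}\|y_v\|_q^2\le e^{-\lambda v}\hat{\mathbb{E}}\|\zeta\|_q^2+\hat{\mathbb{E}}[\sup_{0<s\le v}|y(s)|^2]$. Collecting terms produces an inequality of the form
\begin{equation*}
\hat{\mathbb{E}}\Big[\sup_{0\le v\le t}|y(v)|^2\Big]\le C_4+\hat C_4\int_0^t\hat{\mathbb{E}}\Big[\sup_{0\le s\le v}|y(s)|^2\Big]\,dv,
\end{equation*}
with $C_4$ and $\hat C_4$ of the same shape as in Lemma \ref{Lf3}. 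The Gronwall inequality then gives $\hat{\mathbb{E}}[\sup_{0\le v\le t}|y(v)|^2]\le C_4 e^{\hat C_4 t}$, and finally the elementary bound $\hat{\mathbb{E}}[\sup_{-\infty<v\le t}|y(v)|^2]\le\hat{\mathbb{E}}\|\zeta\|_q^2+\hat{\mathbb{E}}[\sup_{0\le v\le t}|y(v)|^2]$ (together with $\hat{\mathbb{E}}\|\zeta\|_q^2<\infty$) yields the asserted boundedness and hence $y(t)\in M^2_G((-\infty,T];\R^n)$.

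I expect the only genuine subtlety to be the reference-point manipulation needed to bring the single-solution inner products $\langle y(v),f(v,y_v)\rangle$ under the scope of the difference-based hypothesis $A_1$, and the correct coupling of this with Lemma \ref{Lf} to handle the fading-memory norm; once the linear growth bound on $\kappa$ is used, the remainder is a routine Gronwall estimate.
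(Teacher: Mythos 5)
Your proposal is correct and takes essentially the same route as the paper: the paper omits the proof of this lemma, remarking only that it goes like the bound for the Picard iterates under $A_1$ and $A_2$, and your adaptation (G-It\^o formula applied to $|y(t)|^2$, the reference-point decomposition $f(v,y_v)=[f(v,y_v)-f(v,0)]+f(v,0)$ with Young's inequality and $|f(v,0)|^2\vee|g(v,0)|^2\vee|h(v,0)|^2\leq\tilde{K}$, the linearization $\kappa(z)\leq a+bz$, the fading-memory lemma, and a single Gronwall application) is precisely that computation, which the paper in fact carries out verbatim for the solution $y(t)$ in its $L^2_G$ estimate of the final section. Replacing the iterates by the solution itself correctly removes the need for the induction and the $\max_{1\leq k\leq n}$ bookkeeping, and no step in your outline fails.
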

We omit the proof of the above lemma. It can be proved in a similar
way like lemma \ref{Lf3}. To show the existence of solution we set
that for $t\in[0,T]$,
\begin{equation}\label{e1} \mu_1(t)=\tilde{C}t,
\end{equation}
and define a recursive function as follows. For every $k,m\geq 1$,
\begin{equation}\begin{split}\label{e2}
&\mu_{k+1}(t)=\hat{k}\int_{t_0}^t\kappa(\mu_{k}(s))ds,\\&
\mu_{k,m}(t)=E[\sup_{-\tau\leq q\leq s} |y^{k+m}(q)-y^{k}(q)|^2].
\end{split}\end{equation}
Choose $T_1\in[0,T]$ such that for all $t\in[0,T_1],$
\begin{equation}\label{e3}
\hat{k}\kappa(\tilde{C}t)\leq \tilde{C}.\end{equation}
\begin{thm} Let assumptions $A_1$ and $A_2$ hold. Then equation
\ref{1} with the corresponding initial data \eqref{i} admits a
unique solution $y(t)\in M^2_G((-\infty,T];\R^n)$.
\end{thm}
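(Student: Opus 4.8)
The plan is to prove existence by the Picard iteration scheme \eqref{f1} and to prove uniqueness by the Bihari--Osgood argument packaged in Lemma \ref{fl}(i); both halves rest on the same core estimate, Lemma \ref{Lf4}, with the non-Lipschitz function $\kappa$ playing the role that the Lipschitz constant $\hat K$ played in Theorem \ref{thm1}. First I would record from Lemma \ref{Lf3} that the whole sequence $\{y^k\}$ is uniformly bounded, $\hat{\mathbb{E}}[\sup_{0\le v\le t}|y^k(v)|^2]\le C$ on $[0,T]$, so every iterate lies in $M^2_G((-\infty,T];\R^n)$ and all the quantities below are finite.

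The heart of the matter is to show that $\{y^k\}$ is Cauchy. With $\mu_{k,m}(t)=\hat{\mathbb{E}}[\sup_{0<s\le t}|y^{k+m}(s)-y^{k}(s)|^2]$ as in \eqref{e2}, Lemma \ref{Lf4} supplies both the crude bound $\mu_{k,m}(t)\le\tilde C t$ and the recursion $\mu_{k+1,m}(t)\le \hat c\int_0^t\kappa(\mu_{k,m}(s))\,ds$. Comparing with the deterministic sequence $\mu_1(t)=\tilde C t$, $\mu_{k+1}(t)=\hat c\int_0^t\kappa(\mu_k(s))\,ds$ of \eqref{e2}, a short induction on $k$ --- base case $\mu_{1,m}(t)\le\tilde C t=\mu_1(t)$, step $\mu_{k+1,m}\le \hat c\int_0^t\kappa(\mu_{k,m})\le \hat c\int_0^t\kappa(\mu_k)=\mu_{k+1}$, using that $\kappa$ is non-decreasing --- yields $\mu_{k,m}(t)\le\mu_k(t)$ for every $m$ and all $t\in[0,T]$. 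It then remains to drive $\mu_k$ to zero, and here the special choice of $T_1$ in \eqref{e3} enters: on $[0,T_1]$ the inequality $\hat c\,\kappa(\tilde C t)\le\tilde C$ gives $\mu_2(t)=\hat c\int_0^t\kappa(\tilde C s)\,ds\le\tilde C t=\mu_1(t)$, which propagates to $\mu_{k+1}(t)\le\mu_k(t)$ for all $k$. Thus $\{\mu_k(t)\}$ is non-increasing and bounded below by $0$, hence converges to some $\mu(t)\ge 0$; passing to the limit in $\mu_{k+1}(t)=\hat c\int_0^t\kappa(\mu_k(s))\,ds$ (dominated convergence, legitimate by the uniform bound of Lemma \ref{Lf3} and continuity of $\kappa$) shows $\mu(t)=\hat c\int_0^t\kappa(\mu(s))\,ds$, so $\int_{0+}dz/\kappa(z)=\infty$ and Lemma \ref{fl}(i) force $\mu\equiv 0$ on $[0,T_1]$. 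Consequently $\sup_m\mu_{k,m}(t)\to 0$, i.e.\ $\{y^k\}$ is uniformly Cauchy in $L^2_G$ on $[0,T_1]$; its limit $y$ is continuous and $\mathcal F_t$-adapted, and letting $k\to\infty$ in \eqref{f1} (the integral terms converging exactly as in Theorem \ref{thm1}, now with $A_1$--$A_2$ in place of $H_1$--$H_2$) shows $y$ solves \eqref{1} on $[0,T_1]$. Since $T_1$ depends only on $\tilde C$ and $\kappa$, both uniform over $[0,T]$, I would then repeat the construction on $[T_1,2T_1],[2T_1,3T_1],\dots$, using the segment $y_{jT_1}$ as the new initial datum on each block, and patch the finitely many pieces together to obtain a solution on all of $[0,T]$.

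For uniqueness, let $y$ and $z$ be two solutions with the same initial data. Applying the G-It\^o formula to $|z(t)-y(t)|^2$ and repeating the estimate of Lemma \ref{Lf4} with $y^{k+m},y^k$ replaced by $z,y$, together with $A_1$ and Lemma \ref{Lf}, gives
\[
\hat{\mathbb{E}}\Big[\sup_{0\le v\le t}|z(v)-y(v)|^2\Big]\le \hat c\int_0^t\kappa\Big(\hat{\mathbb{E}}\Big[\sup_{0\le s\le v}|z(s)-y(s)|^2\Big]\Big)\,dv .
\]
Invoking Lemma \ref{fl}(i) once more yields $\hat{\mathbb{E}}[\sup_{0\le v\le t}|z(v)-y(v)|^2]=0$, and since the data agree on $(-\infty,0]$ this upgrades to $\hat{\mathbb{E}}[\sup_{-\infty<v\le T}|z(v)-y(v)|^2]=0$, so $y=z$ quasi-surely.

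The step I expect to be the main obstacle is the monotone domination $\mu_{k,m}(t)\le\mu_k(t)$ coupled with the passage to the limit, since the clean geometric bound $L[Mt]^k/k!$ of Theorem \ref{thm1} is unavailable when $\kappa$ is merely concave and non-Lipschitz. Everything hinges on the choice of $T_1$ in \eqref{e3} making $\{\mu_k\}$ non-increasing, and on the Osgood condition $\int_{0+}dz/\kappa(z)=\infty$ feeding Lemma \ref{fl}(i) to annihilate the limit. The subsequent continuation over finitely many blocks of length $T_1$ is routine but must be stated carefully, taking $y_{jT_1}$ as the restarted fading-memory initial segment so that the phase-space structure of $C_q((-\infty,0];\R^n)$ is preserved at each patch.
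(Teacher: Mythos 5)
Your proposal is correct and follows essentially the same route as the paper: the same Picard scheme \eqref{f1}, the same dominating sequence $\mu_k$ from \eqref{e1}--\eqref{e2} with $T_1$ chosen via \eqref{e3}, the same induction $\mu_{k,m}(t)\leq\mu_{k}(t)\leq\mu_{k-1}(t)$, passage to the limit equation $\mu(t)=\hat{c}\int_0^t\kappa(\mu(s))\,ds$ annihilated by Lemma \ref{fl}(i), and the identical G-It\^o/Bihari uniqueness argument. The only difference is expository: you spell out the block-by-block continuation on $[T_1,2T_1],\dots$ with restarted segments $y_{jT_1}$, which the paper compresses into the single phrase ``by iteration.''
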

\begin{proof}
We claim that for all $k\geq 1$ and any $m\geq 1$ there exists a
positive $T_1\in[0,T]$ such that
\begin{equation}\begin{split}\label{e4}
0\leq\mu_{k,m}(t)\leq \mu_{k}(t)\leq \mu_{k-1}(t)\leq...\leq
\mu_{1}(t),
\end{split}\end{equation}
for all $t\in[0,T].$ We use mathematical induction to prove that the
inequality \eqref{e4} holds for all $k\geq1$. By virtue of lemma
\ref{Lf3} and definition of function $\mu(.)$ it follows
\begin{equation*}\begin{split}
& \mu_{1,m}(t)=E[\sup_{-\tau\leq q\leq s}
|y^{1+m}(q)-y^{1}(q)|^2]\leq \tilde{C}t=\mu_1(t).
\end{split}\end{equation*}
\begin{equation*}\begin{split}
\mu_{2,m}(t)&=E[\sup_{-\tau\leq q\leq s} |y^{2+m}(q)-y^{2}(q)|^2]\\&
\leq \hat{k}\int_{t_0}^t\kappa (E[\sup_{-\tau\leq q\leq s}
|y^{1+m}(q)-y^{1}(q)|^2])ds\\& \leq \hat{k}\int_{t_0}^t\kappa
(\mu_1(s))ds=\mu_2(t).
\end{split}\end{equation*}
By using \eqref{e3}, we derive
\begin{equation*}\begin{split}
\mu_2(t)=\hat{k}\int_{t_0}^t\kappa
(\mu_1(s))ds=\int_{t_0}^t\hat{k}\kappa
(\tilde{C}t)ds\leq\tilde{C}t=\mu_1(t).
\end{split}\end{equation*}
Hence for every $t\in[0,T_1],$ we get that $\mu_{2,m}(t)\leq
\mu_2(t)\leq \mu_1(t).$ Suppose that \eqref{e4} holds for some
$k\geq 1.$ Then we have to show that the inequality \eqref{e4} holds
for $k+1,$ as follows
\begin{equation*}\begin{split}
 \mu_{n+1,m}(t)&=E[\sup_{-\tau\leq q\leq s}
|y^{k+m+1}(q)-y^{k+1}(q)|^2]\\& \leq
\hat{k}\int_{t_0}^t\kappa(E[\sup_{-\tau\leq q\leq s}
|y^{k+m}(q)-y^{k}(q)|^2])ds\\&
=\hat{k}\int_{t_0}^t\kappa(\mu_{k,m}(s))ds \leq
\hat{k}\int_{t_0}^t\kappa(\mu_{k}(s))ds =\mu_{k+1}(t).
\end{split}\end{equation*}
And
\begin{equation*}\begin{split}\mu_{k+1}(t)=\hat{k}\int_{t_0}^t\kappa(\mu_{k}(s))ds\leq
\hat{k}\int_{t_0}^t\kappa(\mu_{k-1}(s))ds=\mu_{k}(s).
\end{split}\end{equation*}
Hence for all $t\in[0,T_1],$ $\mu_{k+1,m}(t)\leq \mu_{k+1}(t)\leq
\mu_{k}(s),$ that is, the expression \eqref{e4} holds for $k+1.$ We
observe that for $k\geq 1$, $\mu_k(t)$ is continuous and decreasing
on $t\in[0,T_1]$. By using the dominated convergence theorem, we
define the function $\mu(t)$ by
\begin{equation*}\begin{split}\mu(t)=\lim_{n\rightarrow\infty}\mu_n(t)
=\lim_{n\rightarrow\infty}\hat{k}\int_{t_0}^t\kappa(\mu_{n-1}(s))ds=\hat{k}\int_{t_0}^t\kappa(\mu(s))ds,\,\,\,\,0\leq
t\leq T_1.\end{split}\end{equation*} So,
\begin{equation*}\begin{split}\mu(t)\leq \mu(0)+\hat{k}\int_{t_0}^t\kappa(\mu(s))ds.\end{split}\end{equation*}
Hence for every $0\leq t\leq T_1$, lemma \eqref{f1} gives that
$\mu(t)=0.$ For all $t\in[0, T_1]$, \eqref{e4} follows that
$\mu_{k,m}(s)\leq\mu_{k}(s)\rightarrow 0$ as $k\rightarrow\infty$,
which gives $\hat{\mathbb{E}}|y^{k+m}(t)-y^{k}(t)|^2\rightarrow 0$
as $k\rightarrow\infty.$ Then the completeness of $L^2_G$ implies
that for all $t\in[0, T_1],$
\begin{equation*}\begin{split}  f(t,y^k_t)\rightarrow  f(t,y_t), g(t,y^k_t)\rightarrow
g(t,y_t), h(t,y^k_t)\rightarrow
h(t,y_t)\,\,\,in\,\,L^2_G\,\,\,\,as\,\,\,k\rightarrow\infty.
\end{split}\end{equation*}
Hence for all $t\in[0, T_1],$
\begin{equation*}\begin{split}\lim_{k\rightarrow\infty}y^k (t) &= \zeta (0)
 + \lim_{k\rightarrow\infty}\int_{t_0}^t  f  (s, y^{k-1}_{v})
dv  +\lim_{k\rightarrow\infty} \int_{t_0}^t  g (v,y^{n-1}_{v}) d
\langle B, B \rangle (v) + \lim_{k\rightarrow\infty}\int_{t_0}^t h
(v, y^{n-1}_{v}) dB(v),
\end{split}\end{equation*}
which implies
\begin{equation*}\begin{split} y (t) &= \zeta (0)
 +\int_{t_0}^t  f  (v, y_{v})
dv  + \int_{t_0}^t  g (v, y_{v}) d \langle B, B \rangle (v) +
\int_{t_0}^t h(v, y_{v}) dB(v),
\end{split}\end{equation*}
that is, equation \eqref{f1} with the corresponding given initial
data \eqref{i} admit a unique solution $z(t)$ on $t\in[t_0, T_1].$
By iteration, we get that equation \eqref{1}  admits a solution on
$t\in[t_0, T].$ The proof of existence is complete. To show the
uniqueness, let equation \eqref{1} admits two solutions $y(t)$ and
$z(t)$. Define $\Lambda(t)=z(t)-y(t)$,
$\hat{f}(t)=f(t,z_t)-f(t,y_t)$, $\hat{g}(t)=g(t,z_t)-g(t,y_t)$ and
$\hat{h}(t)=h(t,z_t)-h(t,y_t)$. Using the G-It$\hat{o}$ formula,
lemma \ref{l3} and lemma \ref{l4}, we derive
\begin{equation*}\begin{split}
\hat{\mathbb{E}}\Big[\sup_{0\leq v\leq t}|\Lambda (v)|^2\Big]&\leq
\hat{\mathbb{E}}\Big[\sup_{0\leq v\leq
t}\int_0^t2\langle\Lambda(v),\hat{f}(v)\rangle dv\Big]+
\hat{\mathbb{E}}\Big[\sup_{0\leq v\leq t}\int_0^t2\langle\Lambda(v),
\hat{h}(v)\rangle dB(v)\Big]\\& +\hat{\mathbb{E}}\Big[\sup_{0\leq
v\leq
t}\int_0^t[2\langle\Lambda(v),\hat{g}(v)\rangle+|\hat{h}(v)|^2d
\langle B, B \rangle (v)\Big]\\& \leq
\hat{\mathbb{E}}\int_0^t2\langle\Lambda(v),\hat{f}(v)\rangle
dv+2c_2\hat{\mathbb{E}}\Big[\int_0^t|\langle\Lambda(v)
,\hat{h}(v)\rangle|^2dv\Big]^{\frac{1}{2}}\\&+
c_1\hat{\mathbb{E}}\int_0^t[2\langle\Lambda(v),\hat{g}(v)\rangle+|\hat{h}(v)|^2]dv\\&
\leq \hat{\mathbb{E}}\int_0^t2\langle\Lambda(v),\hat{f}(v)\rangle dv
+ 2c^2_2\hat{\mathbb{E}}\int_0^t|\hat{h}(v)|^2dv
+\frac{1}{2}\hat{\mathbb{E}}\Big[\sup_{0\leq v\leq t}
|\Lambda(v)|^2\Big]\\& +
c_1\hat{\mathbb{E}}\int_0^t[2\langle\Lambda(v),\hat{g}(v)\rangle+|\hat{h}(v)|^2]dv.
\end{split}\end{equation*}
In view of assumption $A_1$ and lemma \ref{Lf3}, it follows
\begin{equation*}\begin{split}
\hat{\mathbb{E}}\Big[\sup_{0\leq v\leq t}|z(v)-y(v)|^2\Big]& \leq
2(1+2c_1+2c^2_2)\hat{\mathbb{E}}\int_0^t\kappa(\| z -
y\|_{q}^{2})dv\\& \leq
2(1+2c_1+2c^2_2)\int_0^t\kappa\Big(\hat{\mathbb{E}}\Big[\sup_{0\leq
v\leq t}| z(v) - y(v)|_{q}^{2}\Big]\Big)dv.
\end{split}\end{equation*}
Consequently lemma \ref{fl} gives
\begin{equation*}
\hat{\mathbb{E}}\Big[\sup_{0\leq v\leq t}|z(v)-y(v)|^2\Big]=0,
\end{equation*}
that is, for $t\in[0,T]$, $z(t)=y(t)$. Therefore we have $z(t)=y(t)$
holds quasi-surely for all $t\in(-\infty,T]$. The uniqueness has
been proved.
\end{proof}
\section{The $L^2_G$ and exponential estimates with weak monotonicity}
Let under the assumptions $A_1$ and $A_2$ equation \eqref{1} with
the initial data \eqref{i} has a unique solution $y(t)$ on
$t\in[0,\infty)$. We now find the $L^2_G$ estimate and then the
exponential estimate as follows.
\begin{lem}\label{Lf3} Let assumptions $A_1$ and $A_2$ hold and
$\hat{\mathbb{E}}\|\zeta\|_q^2<\infty$. Then for all $t\geq0,$
\begin{equation*}
\hat{\mathbb{E}}\Big[\sup_{-\infty< v\leq t}|y(v)|^2\Big]\leq
C_5e^{\hat{C_4}t},
\end{equation*}
where
$C_5=(2+\hat{c}b\lambda^{-1})\hat{\mathbb{E}}\|\zeta\|_q^2+\hat{c}(\tilde{K}+a)T$
and $\hat{C_4}=(2+2c_1+ b\hat{c})$, $\hat{c}=2(1+2c_1+2c^2_2)$ and
$c_1,c_2$ are positive constants.
\end{lem}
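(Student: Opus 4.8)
The plan is to rerun, for the genuine solution $y(t)$, the same energy computation used for the Picard iterates earlier in this section, and then to convert the resulting one-sided bound into the two-sided $\sup_{-\infty<v\le t}$ bound exactly as in Lemma~\ref{Lf1}. Since the preceding (omitted) lemma already guarantees that $y(t)$ is bounded and $y\in M^2_G((-\infty,T];\R^n)$, the G-It\^o calculus below is justified. First I would apply the G-It\^o formula to $|y(t)|^2$, pass to $\sup_{0\le v\le t}$, take the G-expectation, and use the properties of the G-It\^o integral together with Lemma~\ref{l3} and Lemma~\ref{l4} to control the $dB$ and $d\langle B,B\rangle$ contributions. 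This introduces constants $c_1,c_2>0$ and a term $\tfrac12\hat{\mathbb{E}}\big[\sup_{0\le v\le t}|y(v)|^2\big]$, which I absorb into the left-hand side, leaving
\[
\hat{\mathbb{E}}\Big[\sup_{0\le v\le t}|y(v)|^2\Big]\le \hat{\mathbb{E}}\|\zeta\|_q^2+\hat{\mathbb{E}}\int_0^t 2\langle y(v),f(v,y_v)\rangle\,dv+2c_2^2\,\hat{\mathbb{E}}\int_0^t|h(v,y_v)|^2\,dv+c_1\hat{\mathbb{E}}\int_0^t\big[2\langle y(v),g(v,y_v)\rangle+|h(v,y_v)|^2\big]\,dv.
\]

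The crucial step is to linearise the nonlinearity. Applying assumption $A_1$ to the pair $(y_v,0)$ — so that the difference vector $z(0)-y(0)$ is exactly $y(v)$ — gives $2\langle y(v),f(v,y_v)-f(v,0)\rangle\le\kappa(\|y_v\|_q^2)$, and likewise for $g$ and for $|h(v,y_v)-h(v,0)|^2$; note that in the single-solution setting there is no Picard index mismatch, so $A_1$ applies cleanly. Combining these with assumption $A_2$, the Young-type bound $2|y(v)|\,|f(v,0)|\le |y(v)|^2+\tilde K$, and Lemma~\ref{l7} in the form $|h(v,y_v)|^2\le 2\kappa(\|y_v\|_q^2)+2\tilde K$, removes all the $f(v,0),g(v,0),h(v,0)$ terms. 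I then invoke the growth bound $\kappa(z)\le a+bz$ from $A_1$ to replace each $\kappa(\|y_v\|_q^2)$ by $a+b\|y_v\|_q^2$: this is precisely what makes an \emph{explicit} exponential estimate possible, since without linearising $\kappa$ one could only run the Bihari-type Lemma~\ref{fl} and would obtain an implicit bound. Finally Lemma~\ref{Lf} converts $\int_0^t\|y_v\|_q^2\,dv$ into $\lambda^{-1}\hat{\mathbb{E}}\|\zeta\|_q^2+\int_0^t\hat{\mathbb{E}}\big[\sup_{0\le s\le v}|y(s)|^2\big]\,dv$, and after collecting constants I arrive at
\[
\hat{\mathbb{E}}\Big[\sup_{0\le v\le t}|y(v)|^2\Big]\le (1+\hat c\,b\lambda^{-1})\hat{\mathbb{E}}\|\zeta\|_q^2+\hat c(\tilde K+a)T+\hat C_4\int_0^t\hat{\mathbb{E}}\Big[\sup_{0\le s\le v}|y(s)|^2\Big]\,dv,
\]
with $\hat c=2(1+2c_1+2c_2^2)$ and $\hat C_4=2+2c_1+b\hat c$.

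To finish, I would pass from the one-sided to the two-sided supremum via the fading-memory inequality $\hat{\mathbb{E}}\big[\sup_{-\infty<v\le t}|y(v)|^2\big]\le \hat{\mathbb{E}}\|\zeta\|_q^2+\hat{\mathbb{E}}\big[\sup_{0\le v\le t}|y(v)|^2\big]$, exactly as in Lemma~\ref{Lf1}; the extra $\hat{\mathbb{E}}\|\zeta\|_q^2$ raises the coefficient of the initial datum from $1+\hat c\,b\lambda^{-1}$ to $2+\hat c\,b\lambda^{-1}$, which is exactly the stated $C_5$. A single application of the Gronwall inequality to the resulting integral inequality for $\hat{\mathbb{E}}\big[\sup_{-\infty<v\le t}|y(v)|^2\big]$ then yields $\hat{\mathbb{E}}\big[\sup_{-\infty<v\le t}|y(v)|^2\big]\le C_5\,e^{\hat C_4 t}$, which is the assertion. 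I expect the only genuinely delicate point to be the bookkeeping of the constants $c_1,c_2,a,b,\tilde K$ so that the accumulated factors reduce to exactly $\hat c,\hat C_4,C_5$; the single-solution setting is in fact cleaner than the Picard estimate, since the absence of the $\max_{1\le k\le n}$ step removes the spurious $\hat c\,bT\,\hat{\mathbb{E}}\|\zeta\|_q^2$ term and leaves the sharp coefficient $2+\hat c\,b\lambda^{-1}$. Otherwise the structure is identical to the already-established energy and $L^2_G$ estimates of this paper.
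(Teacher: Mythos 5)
Your proposal follows the paper's own proof essentially line for line: the same G-It\^o energy computation with Lemmas \ref{l3} and \ref{l4}, the same application of $A_1$ against the zero function combined with $A_2$ and the linear growth bound $\kappa(z)\le a+bz$ to eliminate $f(v,0),g(v,0),h(v,0)$, the same use of Lemma \ref{Lf} for the fading-memory norm, and the same passage to the two-sided supremum followed by Gronwall, with the constants $\hat{c}$, $\hat{C_4}$, $C_5$ accounted for exactly as in the paper (including your correct observation that dropping the $\max_{1\leq k\leq n}$ step of the Picard lemma replaces $1+\hat{c}b\lambda^{-1}+\hat{c}bT$ by $2+\hat{c}b\lambda^{-1}$). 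The proposal is correct and requires no further comment.
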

\begin{proof} Applying
the G-It$\hat{o}$ formula to $|y(t)|^2$ and using lemmas \ref{l3},
\ref{l4} for any $t\in[0, T]$, we derive
\begin{equation*}\begin{split}
\hat{\mathbb{E}}\Big[\sup_{0\leq v\leq t}|y(v)|^2\Big]&\leq
\hat{\mathbb{E}}\|\zeta\|_q^2+ \hat{\mathbb{E}}\Big[\sup_{0\leq
v\leq t}\int_0^t2\langle y(v),f(v,y_v)\rangle
dv\Big]+\hat{\mathbb{E}}\Big[\sup_{0\leq v\leq t}\int_0^t2\langle
y(v), h(v,y_v)\rangle dB(v)\Big]\\&+
\hat{\mathbb{E}}\Big[\sup_{0\leq v\leq t}\int_0^t[2\langle
y(v),g(v,y_v)\rangle+|h(v,y_v)|^2d \langle B, B \rangle (v)\Big]\\&
\leq \hat{\mathbb{E}}\|\zeta\|_q^2+ \hat{\mathbb{E}}\int_0^t2\langle
y(v),f(v,y_v)\rangle dv +
2c^2_2\hat{\mathbb{E}}\int_0^t|h(v,y_v)|^2dv
+\frac{1}{2}\hat{\mathbb{E}}\Big[\sup_{0\leq v\leq t}
|y(v)|^2\Big]\\&+ c_1\hat{\mathbb{E}}\int_0^t[2\langle
y(v),g(v,y_v)\rangle+|h(v,y_v)|^2]dv.
\end{split}\end{equation*}
By using assumptions $A_1$ and $A_2$, it follows
\begin{equation*}\begin{split}
&\hat{\mathbb{E}}\Big[\sup_{0\leq v\leq t}|y (v)|^2\Big]\\& \leq
\hat{\mathbb{E}}\|\zeta\|_q^2+2\hat{\mathbb{E}}\int_0^t[\kappa(\|y\|_q^2)+|y(v)|^2+|f(v,0)|^2]
dv + 4c^2_2\hat{\mathbb{E}}\int_0^t[\kappa(\|y\|_q^2)+|h(v,0)|^2]dv
\\&+
2c_1\hat{\mathbb{E}}\int_0^t[\kappa(\|y\|_q^2)+|y(v)|^2+|g(v,0)|^2+
\kappa(\|y\|_q^2)+|h(v,0)|^2]dv,
\end{split}\end{equation*}
which by straightforward calculations, yileds
\begin{equation*}\begin{split}
\hat{\mathbb{E}}\Big[\sup_{0\leq v\leq t}|y (v)|^2\Big]& \leq
\hat{\mathbb{E}}\|\zeta\|_q^2+\hat{c}\tilde{K}T+\hat{c}aT+2(1+c_1)\hat{\mathbb{E}}\int_0^t|y(v)|^2dv+
\hat{c}b\hat{\mathbb{E}}\int_0^t\|y\|_q^2dv,
\end{split}\end{equation*}
where $\hat{c}=2(1+2c_1+2c^2_2)$. By virtue of lemma \ref{Lf}, we
have
\begin{equation}\begin{split}\label{4} \hat{\mathbb{E}}\Big[\sup_{0\leq
v\leq t}|y(v)|^2\Big]&\leq
\hat{\mathbb{E}}\|\zeta\|_q^2+\hat{c}(\tilde{K}+a)T+2(1+c_1)\hat{\mathbb{E}}\int_0^t[\sup_{0\leq
v\leq t}|y(v)|^2]dv\\&+
\hat{c}b\hat{\mathbb{E}}\int_0^t[\|\zeta\|_q^2e^{-\lambda
v}+\sup_{0\leq v\leq t}|y(v)|^2]dv\\& \leq
\hat{\mathbb{E}}\|\zeta\|_q^2+\hat{c}(\tilde{K}+a)T+\hat{c}b\lambda^{-1}\hat{\mathbb{E}}\|\zeta\|_q^2
+(2+2c_1+ b\hat{c})\hat{\mathbb{E}}\int_0^t[\sup_{0\leq v\leq
t}|y(v)|^2]dv.
\end{split}\end{equation}
Noting that, $\hat{\mathbb{E}}\Big[\sup_{-\infty< v\leq
t}|y(v)|^2\Big]\leq
\hat{\mathbb{E}}\|\zeta\|_q^2+\hat{\mathbb{E}}\Big[\sup_{0\leq v\leq
t}|y(v)|^2\Big]$, it follows
\begin{equation*}\begin{split} \hat{\mathbb{E}}\Big[\sup_{-\infty<
v\leq t}|y(v)|^2\Big]& \leq
(2+\hat{c}b\lambda^{-1})\hat{\mathbb{E}}\|\zeta\|_q^2+\hat{c}(\tilde{K}+a)T
+(2+2c_1+ b\hat{c})\int_0^t\hat{\mathbb{E}}\Big[\sup_{-\infty< v\leq
t}|y(v)|^2\Big]dv.
\end{split}\end{equation*}
By using the Grownwall inequality, we get
\begin{equation*}
\hat{\mathbb{E}}\Big[\sup_{-\infty< v\leq t}|y^k(v)|^2\Big]\leq
C_5e^{\hat{C_4}t},
\end{equation*}
where
$C_5=(2+\hat{c}b\lambda^{-1})\hat{\mathbb{E}}\|\zeta\|_q^2+\hat{c}(\tilde{K}+a)T$
and $\hat{C_4}=2+2c_1+ b\hat{c}$. The proof is complete.
\end{proof}
\begin{thm} Let assumptions $A_1$ and $A_2$ hold. Then
\begin{equation*}
\lim_{t\rightarrow\infty}\sup \frac{1}{t}log|y(t)|\leq \beta,
\end{equation*}
where $\beta=1+c_1+ b(1+2c_1+2c^2_2)$.
\end{thm}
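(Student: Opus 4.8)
The plan is to mirror the argument of Theorem \ref{thm2}, now feeding the Borel--Cantelli step with the $L^2_G$ bound just established under the weak monotonicity conditions $A_1$ and $A_2$ rather than with the bound coming from $H_1$. The key observation that makes the constant come out right is that the target exponent is exactly half of the Gronwall rate of the preceding lemma: since $\hat{C_4}=2+2c_1+b\hat{c}$ with $\hat{c}=2(1+2c_1+2c^2_2)$, one has $\tfrac12\hat{C_4}=1+c_1+b(1+2c_1+2c^2_2)=\beta$. Hence it suffices to show that $\limsup_{t\to\infty}\tfrac1t\log|y(t)|\leq\tfrac12\hat{C_4}$.

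First I would record the half-line $L^2_G$ estimate. Starting from inequality \eqref{4} and applying the Gronwall inequality, one obtains a constant $C$, independent of $t$, with $\hat{\mathbb{E}}\big[\sup_{0\leq v\leq t}|y(v)|^2\big]\leq Ce^{\hat{C_4}t}$ for all $t\geq0$. Restricting to the integer windows, this gives, for each $m=1,2,\dots$, the estimate $\hat{\mathbb{E}}\big[\sup_{m-1\leq t\leq m}|y(t)|^2\big]\leq Ce^{\hat{C_4}m}$.

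Next, for an arbitrary $\epsilon>0$ I would apply the G-Markov inequality (Lemma \ref{l2}) to the random variable $\sup_{m-1\leq t\leq m}|y(t)|^2$ with threshold $e^{(\hat{C_4}+\epsilon)m}$, obtaining
\[
\hat{C}\Big\{\sup_{m-1\leq t\leq m}|y(t)|^2>e^{(\hat{C_4}+\epsilon)m}\Big\}\leq Ce^{-\epsilon m}.
\]
Since $\sum_{m}Ce^{-\epsilon m}<\infty$, the Borel--Cantelli lemma in the G-capacity framework (precisely as used in Theorem \ref{thm2}) produces, for quasi-all $w$, a random integer $m_0=m_0(w)$ such that $\sup_{m-1\leq t\leq m}|y(t)|^2\leq e^{(\hat{C_4}+\epsilon)m}$ whenever $m\geq m_0$. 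Taking logarithms on the window $m-1\leq t\leq m$ and dividing by $t$ yields $\tfrac1t\log|y(t)|\leq\tfrac{(\hat{C_4}+\epsilon)m}{2t}\leq\tfrac{(\hat{C_4}+\epsilon)m}{2(m-1)}$, so letting $t\to\infty$ (equivalently $m\to\infty$) gives $\limsup_{t\to\infty}\tfrac1t\log|y(t)|\leq\tfrac{\hat{C_4}+\epsilon}{2}$. As $\epsilon>0$ is arbitrary, the bound $\beta=\tfrac12\hat{C_4}$ follows.

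The only point requiring care — and it is the same point as in Theorem \ref{thm2} — is the passage to the quasi-sure statement: the Markov and Borel--Cantelli machinery must be invoked for the sublinear expectation $\hat{\mathbb{E}}$ and its capacity $\hat{C}$, with the summability of $\{Ce^{-\epsilon m}\}_m$ guaranteeing that the exceptional set on which the window bound fails infinitely often is polar. Apart from this, every estimate is a verbatim adaptation of the monotone case, with $c_3$ replaced by $\hat{C_4}$ and the weak-monotonicity $L^2_G$ bound \eqref{4} used in place of \eqref{n2}; no new analytic difficulty is expected to arise.
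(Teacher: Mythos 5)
Your proposal is correct and follows essentially the same route as the paper's own proof: Gronwall applied to \eqref{4} yields the $L^2_G$ bound $C e^{\hat{C_4}t}$, which is then fed through the G-Markov inequality (Lemma \ref{l2}) and the Borel--Cantelli argument over integer windows, exactly as in Theorem \ref{thm2}, with the observation $\beta=\tfrac{1}{2}\hat{C_4}$ identifying the constant. No substantive difference from the paper's argument.
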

\begin{proof} By using the Grownwall inequality from \eqref{4}, it
follows\begin{equation}\label{5} \hat{\mathbb{E}}\Big[\sup_{0\leq
v\leq t}|y(v)|^2\Big]\leq C_6 e^{\hat{C_4} t},
\end{equation}
$C_6=(1+\hat{c}b\lambda^{-1})\hat{\mathbb{E}}\|\zeta\|_q^2+\hat{c}(\tilde{K}+a)T$
and $\hat{C_4}=2+2c_1+ b\hat{c}$. By virtue of the above result
\eqref{5}, for each $m=1,2,3,...,$ we have
\begin{equation*}
\hat{\mathbb{E}}\Big[\sup_{m-1\leq t\leq m}|y(t)|^2\Big]\leq C_6
e^{\hat{C_4} m}.\end{equation*} For any $\epsilon>0$, by using lemma
\ref{l2} we get
\begin{equation*}\begin{split}
\hat{C}\Big\{w:\sup_{m-1\leq t\leq
m}|y(t)|^2>e^{(\hat{C_4}+\epsilon)m}\Big\}&\leq
\frac{\hat{\mathbb{E}}\Big[\sup_{m-1\leq t\leq
m}|y(t)|^2\Big]}{e^{(\hat{C_4}+\epsilon)m}}\\& \leq\frac{C_6
e^{\hat{C_4} m}}{e^{(\hat{C_4}+\epsilon)m}} =C_6 e^{-\epsilon m}.
\end{split}\end{equation*}
Then by similar arguments used in theorem \ref{thm2}, we obtain
\begin{equation*}\begin{split}
\lim_{t\rightarrow\infty}\sup \frac{1}{t}log|y(t)|&\leq
\frac{\hat{C_4}+\epsilon}{2}= 1+c_1+
b(1+2c_1+2c^2_2)+\frac{\epsilon}{2},
\end{split}\end{equation*}
but $\epsilon$ is arbitrary and the above result reduces to
\begin{equation*}
\lim_{t\rightarrow\infty}\sup \frac{1}{t}log|y(t)|\leq \beta,
\end{equation*}
where $\beta=1+c_1+ b(1+2c_1+2c^2_2)$. The proof is complete.
\end{proof}

We derive the results in the phase space with fading memory
$C_q((-\infty,0];\R^n)$. However, all the results of this article
also hold in the space $BC((-\infty,0];\R^n)$ defined in
\cite{fm,m,rbs}.

\section{Acknowledgement} This work is sponsored by the Commonwealth
Scholarship Commission in the United Kingdom with project ID Number:
PKRF-2017-429.


\begin{thebibliography}{99}


\bibitem{bl} X. Bai, Y. Lin, On the existence and uniqueness of solutions to
stochastic differential equations driven by G-Brownian motion with
Integral-Lipschitz coefficients, Acta Mathematicae Applicatae
Sinica, English Series  3(30) (2014) 589–610.


\bibitem{bg} T. Brett, T. Galla, Stochastic processes with distributed delays: Chemical Langevin equation and linear-noise
approximation, Physical Review Letters, 110 (2013) 250-601.


\bibitem{cgr} T. Caraballo, M. J. Garrido-Atienza, J. Real, Existence and uniqueness of solutions for delay stochastic evolution
 equations, Stoch. Anal. and Appl. 20 (2002) 1225-1256.


\bibitem{dhp} L. Denis, M. Hu, S. Peng, Function spaces and capacity related to a sublinear
expectation: Application to G-Brownian motion paths, Potential Anal.
34 (2010) 139-161.

\bibitem{f1} F. Faizullah, On stochastic pantograph differential equations
in the framework of G-Brownian motion, Journal of Computational
Analysis and Applications, 5(26) (2018) 819-831
doi:(http://www.eudoxuspress.com/images/JOCAAA-VOL-26-2019-ISSUE-5.pdf).

\bibitem{fm} F. Faizullah, A.A. Memom, M.A. Rana, M. Hanif, The p-moment
exponential estimates for neutral stochastic functional differential
equations in the G-framework, Journal of Computational Analysis and
Applications, 1(26) (2018) 81-90
doi:(http://www.eudoxuspress.com/images/JOCAAA-VOL-26-2019-ISSUE-1.pdf).

\bibitem{fb} F. Faizullah, M. Bux, MA Rana, G. Rahman, Existence and
stability of solutions to non-linear neutral stochastic functional
differential equations in the framework of G-Brownian motion,
Advances in Difference Equations, 1 (2017) 350-364.


\bibitem{ff} F. Faizullah, Existence results and moment estimates
for NSFDEs driven by G-Brownian motion, Journal of Computational and
Theoretical Nanoscience, 7(13) (2016) 1-8.


\bibitem{f2} F. Faizullah, Existence and uniqueness of solutions to
SFDEs driven by G-Brownian motion with non-Lipschitz conditions,
Journal of Computational Analysis and Applications, 2(23) (2017)
344-354.


\bibitem{f3} F. Faizullah, A note on p-th moment estimates for
stochastic functional differential equations in the framework of
G-Brownian motion, Iran J Sci Technol Trans Sci, 41 (2017)
1131–1138.

\bibitem{f4} F. Faizullah, On the pth moment estimates of solutions to
stochastic functional differential equations in the G-framework,
SpringerPlus, 5(872) (2016) 1-11.

\bibitem{f7} F. Faizullah, Existence and asymptotic properties for the solutions to nonlinear SFDEs driven by G-Brownian
motion with infinite delay, arXiv:1805.10658 [math.PR], (2018) 1-21.




 \bibitem{f6} F. Faizullah, Existence of solutions for G-SFDEs
 with Cauchy-Maruyama Approximation Scheme, Abst. Appl.
 Anal. (2014) 1-8, http://dx.doi.org/10.1155/2014/809431.

 \bibitem{g} F. Gao, Pathwise properties and homeomorphic flows for
stochastic differential equations driven by G-Brownian motion, Stoch
Proc Appl. 119 (2009) 3356-3382.



\bibitem{j} R. Jahanipur, Stochastic functional evolution equations with monotone nonlinearity:
Existence and stability of the mild solutions, Journal of
Differential Equations, 248 (2010) 1230-1255.


\bibitem{ks} Y. Kuang, H.L. Smith, Global stability for infinite delay Lotka–Volterra type system, J. Differential Equations, 103 (1993) 221–246.


\bibitem{lf} X. Li, X. Fu, Stability analysis of stochastic functional differential equations with
infinite delay and its application to recurrent neural networks,
Journal of Computational and Applied Mathematics, 234 (2010)
407–417.




\bibitem{ly} G. Li, Q. Yang, Stability of neutral stochastic functional
differential equations with Markovian switching driven by G-Brownian
motion, Applicable Analysis, DOI: 10.1080/00036811.2017.1377831
 (2017).


\bibitem{m}  X. Mao, Stochastic Differential Equations and their
Applications, Horwood Publishing Chichester, Coll House England
1997.

\bibitem{mc}  X. Mao, Chenggui Yuan, Stochastic Differential Equations with Markovian Switching, Imperial College Press, London WC2H 9HE,
 (2006).


\bibitem{myz} X. Mao, C. Yuan, J. Zou, Stochastic differential delay equations of population dynamics, J. Math. Anal. Appl.
304 (2005) 296–320.

\bibitem{mpbf} J. Miekisz, J. Poleszczuk, M. Bodnar, U. Fory´s, Stochastic models of gene expression with delayed
degradation, Bulletin of mathematical biology, 73(9) (2011)
2231–2247.


\bibitem{m2} S-E A. Mohammed, Stochastic Functional Differential Equations, Research Notes in Mathematics 99, Pitman Advanced
Publishing Program, Boston, MA, 1984.


\bibitem{o} B. Oksendal, Stochastic Differential Equations: An Introduction with Applications,
Fifth Edition, Springer-Verlag Heidelberg New York, 2003.



\bibitem{p3} S. Peng, Nonlinear expectations and stochastic calculus
under uncertainty, 2010, preprint archive:1002.4546.

\bibitem{rbs}  Y. Ren, Q. Bi, R. Sakthivel, Stochastic functional differential equations with infinite delay driven by G-Brownian
motion, Math  Method Appl Sci. 36(13) (2013) 1746-1759.

\bibitem{rf} R. Ullah, F. Faizullah, On existence and approximate
solutions for stochastic differential equations in the framework of
G-Brownian motion, European Physical Journal Plus, 132 (2017)
435-443.



\bibitem{wc} F. Wei, Y. Cai, Existence, uniqueness and stability of the
solution to neotral stochastic functional differential equations
with infinite delay under non-Lipschitz conditions, Adv. Differ.
Equations, 151 (2013) 1-12.


\bibitem{wzl} W. Wei, M. Zhang, P. Luo, Asympyotic estimates for the
solution of stochastic differential equations driven by G-Brownian
motion, Applicable Analysis, 3 (2017) 1-12.

\bibitem{wym} F. Wu, G. Yin, H. Mei, Stochastic functional differential equations with
infinite delay: Existence and uniqueness of solutions, solutionmaps,
Markov properties, and ergodicity, J. Differential Equations 262
(2017) 1226–1252.

\bibitem{z} Y. Zheng, Asset pricing based on stochastic delay differential
equations, Graduate Theses and Dissertations, (2015),
http://lib.dr.iastate.edu/etd/14433.

\bibitem{zlz} M. Zhu, J. Lia, Y. Zhu, Exponential stability of neutral stochastic functional differential equations
driven by G-Brownian motion, J. Nonlinear Sci. Appl., 10 (2017),
1830-1841.

\end{thebibliography}
\end{document}